\begin{document}

%\begin{center}{\Large\bf Stable Automorphic Forms for $GL(n,\mathbb{R})$}
%\end{center}

\title{Derivatives of $L$-functions}
% \markright{Harmonic Analysis}

\author{Jae-Hyun Yang}

\address{Yang Institute for Advanced Study
\newline\indent
Hyundai 41 Tower, No. 1905
\newline\indent
293 Mokdongdong-ro, Yangcheon-gu
\newline\indent
Seoul 07997, Korea}
%\newline
%Present address\,: Korea}
\email{jhyang@inha.ac.kr\ \ or\ \ jhyang8357@gmail.com}

\newtheorem{theorem}{Theorem}[section]
\newtheorem{lemma}{Lemma}[section]
\newtheorem{proposition}{Proposition}[section]
\newtheorem{remark}{Remark}[section]
\newtheorem{definition}{Definition}[section]

\renewcommand{\theequation}{\thesection.\arabic{equation}}
\renewcommand{\thetheorem}{\thesection.\arabic{theorem}}
\renewcommand{\thelemma}{\thesection.\arabic{lemma}}
\newcommand{\bbr}{\mathbb R}
\newcommand{\bbs}{\mathbb S}
\newcommand{\bn}{\bf n}
\newcommand\charf {\mbox{{\text 1}\kern-.24em {\text l}}}
\newcommand\fg{{\mathfrak g}}
\newcommand\fk{{\mathfrak k}}
\newcommand\fp{{\mathfrak p}}
\newcommand\g{\gamma}
\newcommand\G{\Gamma}
\newcommand\ka{\kappa}
\newcommand\al{\alpha}
\newcommand\be{\beta}
\newcommand\lrt{\longrightarrow}
\newcommand\s{\sigma}
\newcommand\ba{\backslash}
\newcommand\lmt{\longmapsto}
\newcommand\CP{{\mathscr P}_n}
\newcommand\CM{{\mathcal M}}
\newcommand\BC{\mathbb C}
\newcommand\BZ{\mathbb Z}
\newcommand\BR{\Bbb R}
\newcommand\BQ{\mathbb Q}
\newcommand\Rmn{{\mathbb R}^{(m,n)}}
\newcommand\PR{{\mathcal P}_n\times {\mathbb R}^{(m,n)}}
\newcommand\Gnm{GL_{n,m}}
\newcommand\Gnz{GL_{n,m}({\mathbb Z})}
\newcommand\Gjnm{Sp_{n,m}}
\newcommand\Gnml{GL(n,{\mathbb R})\ltimes {\mathbb R}^{(m,n)}}
\newcommand\Snm{SL_{n,m}}
\newcommand\Snz{SL_{n,m}({\mathbb Z})}
\newcommand\Snml{SL(n,{\mathbb R})\ltimes {\mathbb R}^{(m,n)}}
\newcommand\Snzl{SL(n,{\mathbb Z})\ltimes {\mathbb Z}^{(m,n)}}
\newcommand\la{\lambda}
\newcommand\GZ{GL(n,{\mathbb Z})\ltimes {\mathbb Z}^{(m,n)}}
\newcommand\DPR{{\mathbb D}(\PR)}
\newcommand\Rnn{{\mathbb R}^{(n,n)}}
\newcommand\Yd{{{\partial}\over {\partial Y}}}
\newcommand\Vd{{{\partial}\over {\partial V}}}
\newcommand\Ys{Y^{\ast}}
\newcommand\Vs{V^{\ast}}
\newcommand\DGR{{\mathbb D}(\Gnm)}
\newcommand\DKR{{\mathbb D}_K(\Gnm)}
\newcommand\DKS{{\mathbb D}_{K_0}(\Snm)}
\newcommand\fa{{\frak a}}
\newcommand\fac{{\frak a}_c^{\ast}}
\newcommand\SPR{S{\mathcal P}_n\times \Rmn}
\newcommand\DSPR{{\mathbb D}(\SPR)}
\newcommand\BD{{\mathbb D}}
\newcommand\SP{{\mathfrak P}_n}
\newcommand\BH{{\mathbb H}}

\thanks{2010 Mathematics Subject Classification. Primary 11F67, 11F41, 11G15, 11G18, 14G35, 14G40.
\endgraf
Keywords and phrases\,: derivatives of $L$-functions, the Gross-Zagier formula, Shimura varieties, \\ \indent automorphic Green functions, Faltings heights, Colmez's conjecture.}

\begin{abstract} In this paper, we investigate the derivatives of $L$-functions, in particular,
the Riemann zeta function, the Hasse-Weil $L$-function, the Rankin $L$-function and the Artin $L$-function, and survey the relations between the derivatives of $L$-functions and the geometry and arithmetic of the associated Shimura varieties.
\end{abstract}
\maketitle

%\tableofcontents

\vskip 7mm

\centerline{\large \bf Table of Contents}

\vskip 0.75cm $ \quad\qquad\textsf{\large \ 1.
Introduction}$\vskip 0.021cm
%\par

$\quad\qquad \textsf{\large\ 2. Derivatives of the Riemann zeta function}$
\vskip 0.0421cm
%\par

$ \quad\qquad  \textsf{\large\ 3. Derivatives of the Hasse-Weil $L$-function of an elliptic curve}$
\vskip 0.0421cm
\par
$\quad\qquad \qquad  \textsf{ 3.1.\ Basic\ notions}$
\par
$\quad\qquad \qquad  \textsf{ 3.2.\ Points\ of\ finite\ order}$
\par
$\quad\qquad \qquad  \textsf{ 3.3.\ The\ Mordell-Weil\ theorem}$
\par
$\quad\qquad \qquad  \textsf{ 3.4.\ The\ Birch-Swinnerton-Dyer\ conjecture}$
\par
$\quad\qquad \qquad  \textsf{ 3.5.\ Heegner\ points\ and\ Jacobi\ forms}$
\vskip 0.0421cm
$ \quad\qquad  \textsf{\large\ 4. The Gross-Zagier formula}$
\vskip 0.0421cm

$ \quad\qquad  \textsf{\large\ 5. Borcherds product}$
\vskip 0.0421cm

$ \quad\qquad \textsf{\large\ 6. Derivatives of the Rankin $L$-functions of the orthogonal}$ \\
\indent\ \ \ \ \ \ \ \ \ $\qquad\textsf{\large Shimura varieties of signature $(2,n)$}$
\vskip 0.0421cm
%\par

$ \quad\qquad  \textsf{\large\ 7. Derivatives of the Artin $L$-functions }$
\par
$\quad\qquad \qquad  \textsf{ 7.1.\ Faltings \ heights}$
\par
$\quad\qquad \qquad  \textsf{ 7.2.\ Colmez's\ conjecture}$
\vskip 0.0421cm

$ \quad\qquad
\textsf{\large\ 8. Final Remarks}$
\vskip 0.0421cm

$ \quad\qquad
\textsf{\large\ Appendix A. The L-function $L(F,U,s)$}$
\vskip 0.0421cm

$ \quad\qquad
\textsf{\large\ Appendix B. The Andr{\'e}-Oort Conjecture}$
\vskip 0.0421cm

$ \quad\qquad
\textsf{\large\ Appendix C. The Gross-Kohnen-Zagier theorem in higher dimension}$
\vskip 0.0421cm

%$ \qquad\qquad\qquad\qquad\textsf{\large Acknowledgements }$
%\par

$ \quad\qquad\ \textsf{\large References }$
%\par

\vskip 10mm

%%%%%%%%%%%%%%%%%%%%%%%%%%%%%%%%%%%%%%%%%%%%%%%%%%%%%%%%%%%%%%%%%%%%%%%%%%%%%%%%%%%%%%%%%%%%%%%%%%%%%%%%%%%%%%%%%%%%%%%%%%%%%%%%%%%%%%%%%%%%%%%%%%%%%%%%
%%%%%%%%%%%%%%%%%%%%%%%%%%%%%%%%%%%%%%%%%%%%%%%%%%%%%%%%%%%%%%%%%%%%%%%%%%%%%%%%%%%%%%%%%%%%%%%%%%%%%%%%%%%%%%%%%%%%%%%%%%%%%%%%%%%%%%%%%%%%%%%%%%%%%%%%
%%%%%%%%%%%%%%%%%%%%%%%%%%%%%%%%%%%%%%%%%%%%%%%%%%%%%%%%%%%%%%%%%%%%%%%%%%%%%%%%%%%%%%%%%%%%%%%%%%%%%%%%%%%%%%%%%%%%%%%%%%%%%%%%%%%%%%%%%%%%%%%%%%%%%%%%
%%%%%%%%%%%%%%%%%%%%%%%%%%%%%%%%%%%%%%%%%%%%%%%%%%%%%%%%%%%%%%%%%%%%%%%%%%%%%%%%%%%%%%%%%%%%%%%%%%%%%%%%%%%%%%%%%%%%%%%%%%%%%%%%%%%%%%%%%%%%%%%%%%%%%%%%
%%%%%%%%
%%%%%%%%
%%%%%%%%                   Section 1  Introduction
%%%%%%%%
%%%%%%%%
%%%%%%%%%%%%%%%%%%%%%%%%%%%%%%%%%%%%%%%%%%%%%%%%%%%%%%%%%%%%%%%%%%%%%%%%%%%%%%%%%%%%%%%%%%%%%%%%%%%%%%%%%%%%%%%%%%%%%%%%%%%%%%%%%%%%%%%%%%%%%%%%%%%%%%%%
%%%%%%%%%%%%%%%%%%%%%%%%%%%%%%%%%%%%%%%%%%%%%%%%%%%%%%%%%%%%%%%%%%%%%%%%%%%%%%%%%%%%%%%%%%%%%%%%%%%%%%%%%%%%%%%%%%%%%%%%%%%%%%%%%%%%%%%%%%%%%%%%%%%%%%%%
%%%%%%%%%%%%%%%%%%%%%%%%%%%%%%%%%%%%%%%%%%%%%%%%%%%%%%%%%%%%%%%%%%%%%%%%%%%%%%%%%%%%%%%%%%%%%%%%%%%%%%%%%%%%%%%%%%%%%%%%%%%%%%%%%%%%%%%%%%%%%%%%%%%%%%%%
%%%%%%%%%%%%%%%%%%%%%%%%%%%%%%%%%%%%%%%%%%%%%%%%%%%%%%%%%%%%%%%%%%%%%%%%%%%%%%%%%%%%%%%%%%%%%%%%%%%%%%%%%%%%%%%%%%%%%%%%%%%%%%%%%%%%%%%%%%%%%%%%%%%%%%%%

\begin{section}{{\bf Introduction}}
\setcounter{equation}{0}
\vskip 3mm
The theory of $L$-functions is one of the important and unifying themes of number theory and arithmetic geometry. $L$-functions are naturally and closely related to number fields, automorphic forms, Artin representations, Shimura varieties, abelian varieties, intersection theory etc.
In particular, central values of $L$-functions and their derivatives are curious and of interest because these values are closely related to geometric and arithmetic properties of the Shimura varieties. For example, the Gross-Zagier formula, Colmez's  conjecture, the averaged Colmez formula provide some evidences that there are some interesting and curious relations between geometrical properties and special values of derivatives of $L$-functions.

\vskip 3mm
The purpose of this article is to review the results about derivatives of $L$-functions obtained by various mathematicians, e.g., Gross, Zagier, the school of Kudla past three and a half decades. Here we will deal with derivatives of the Riemann zeta function, the Hasse-Weil $L$-function of an elliptic curve, the Rankin-Selberg convolution $L$-function associated to the orthogonal Shimura variety of signature $(n,2)$, and a certain Artin $L$-function.

\vskip 3mm
The paper is organized as follows.
In section 2, we deal with derivatives of the Riemann zeta function. We review the result obtained by Kudla, Rapoport and Yang \cite{KuRY, KuRY1}.
In section 3, we deal with derivatives of the
Hasse-Weil $L$-function of an elliptic curve. We briefly describes some materials about an elliptic curve \cite{Si} and then survey the topics (the rank of an elliptic curve and the Birch-Swinnerton-Dyer conjecture) related to central values of the derivative of the $L$-function of an elliptic curve.
Gross and Zagier \cite{GZ} described the height of Heegner points in terms of cental value of the first derivative of the $L$-function of an elliptic curve. Gross, Kohnen and Zagier \cite{GKZ} showed that the
heights of Heegner points are the coefficients of a modular form of weight $3/2$. Kolyvagin
\cite{Kol1, Kol2} used Heegner points to construct Euler systems, and used this to prove some of the Birch-Swinnerton-Dyer conjecture for elliptic curves of rank $1$.
In section 4, we review the Gross-Zagier formula obtained by Gross and Zagier \cite{GZ}. We mention that the Gross-Zagier formula was generalized to the Gross-Zagier formula on Shimura curves by the Zhang school \cite{Zh1, Zh2, YZZ}.
In section 5, we review Borcherds products \cite{Bo1, Bo2} which are needed to understand the topics of section 6 and section 7. This note was written on the base of Bruinier's lecture note \cite{Br}. Borcherds used his products to construct modular generating series with coefficients in the first Chow group ${\rm CH}^1$ for divisors on locally symmetric varieties associated to the orthogonal group $O(n,2)$.
In section 6, we survey the work of J. Bruinier and T. Yang \cite{BY2}. They related the Faltings height pairing of arithmetic special divisors and CM cycles on Shimura varieties associated to orthogonal groups of signature $(n,2)$ to the central values of certain Rankin-Selberg $L$-functions.
In section 7, we briefly survey the averaged Colmez formula obtained recently by Andreatta, Goren, Howard and Madapusi Pera \cite{AGHM2}. Almost at the same time the averaged Colmez formula was obtained independently by Yuan and Zhang \cite{YZ}. But so far Colmez's conjecture has not been solved. Using the averaged Colmez formula, J. Tsimerman \cite{T} proved the Andr{\'e}-Oort conjecture for the Siegel modular varieties.
In Appendix A, we describe the Rankin-Selberg $L$-function appearing in section 6 in some detail. We follow the papers \cite{BY2, BKY, Sch} closely.
In Appendix B, we briefly mention the Andr{\'e}-Oort conjecture and give a sketch of the proof of Tsimerman for the Andr{\'e}-Oort conjecture for Siegel modular varieties. This note is based on \cite{T, PT2}.
In Appendix C, we review the Gross-Kohnen-Zagier theorem in higher dimension obtained by Borcherds
\cite{Bo2}.

\vskip 0.31cm \noindent {\bf Notations:} \ \ We denote by
$\BQ,\,\BR$ and $\BC$ the field of rational numbers, the field of
real numbers and the field of complex numbers respectively. We
denote by $\BZ$ and $\BZ^+$ the ring of integers and the set of
all positive integers respectively. $\BQ^+$ (resp. $\BR^+$) denotes the set of all positive
rational (resp. real) numbers. We denotes by $\BZ_+$ (resp. $\BQ_+,\ \BR_+$) the set of all
non-negative integers (resp. rational numbers, real numbers). $\mathbb H$ denotes the Poincar{\'e}
upper half plane. $[a,b,c,d]$ denotes the $2\times 2$ matrix
$\begin{pmatrix}
   a & b \\
   c & d
 \end{pmatrix}$.
$\BQ^{\times}$ (resp. $\BR^{\times},\ \BC^{\times}$)
denotes the group of nonzero rational (resp. real, complex) numbers.
If $F$ is a number field, $F^{\times}$ denotes the multiplicative group of nonzero elements in $F$.
The symbol ``:='' means that
the expression on the right is the definition of that on the left.
For two positive integers $k$ and $l$, $F^{(k,l)}$ denotes the set
of all $k\times l$ matrices with entries in a commutative ring
$F$. For a square matrix $A\in F^{(k,k)}$ of degree $k$,
$\sigma(A)$ denotes the trace of $A$. For any $M\in F^{(k,l)},\
^t\!M$ denotes the transpose of $M$. For a positive integer $n$, $I_n$
denotes the identity matrix of degree $n$.
 For a complex matrix $A$,
${\overline A}$ denotes the complex {\it conjugate} of $A$.
${\rm diag}(a_1,\cdots,a_n)$ denotes the $n\times n$ diagonal matrix with diagonal entries
$a_1,\cdots,a_n$. For a smooth manifold, we denote by $C_c (X)$ (resp. $C_c^{\infty}(X)$ the algebra of all continuous (resp. infinitely differentiable) functions on $X$ with compact support. $\mathbb{H}$ denotes the Poincar{\'e} upper half plane.
%\begin{equation*}
%J_g=\begin{pmatrix} 0&I_g\\
%                   -I_g&0\end{pmatrix}
%\end{equation*}
%denotes the symplectic matrix of degree $2g$.
\begin{equation*}
{\mathbb H}_g=\,\{\,\Omega\in \BC^{(g,g)}\,|\ \Omega=\,^t\Omega,\ \ \ \text{Im}\,\Omega>0\,\}
\end{equation*}
denotes the Siegel upper half plane of degree $g$.
If $z\in\BC,$ we put $e(z):=e^{2\pi iz}$. For an even lattice $L$ with a quadratic form, $D(L):=L'/L$
denotes the discriminant of $L$ where $L'$ is the dual lattice of $L$. We denote by $\BC [D(L)]$
the group algebra of the discriminant group $D(L)$. $H(\beta,m)$ denotes the Heegner divisor of
index $(\beta,m).$
For a regular scheme $\mathscr{X}$ which is projective and flat over ${\rm Spec}\,\BZ$, we denote by
$\widehat{\rm CH}^p(\mathscr{X})$ the arithmetic Chow group of codimension p.

\end{section}
\vskip 10mm

%%%%%%%%%%%%%%%%%%%%%%%%%%%%%%%%%%%%%%%%%%%%%%%%%%%%%%%%%%%%%%%%%%%%%%%%%%%%%%%%%%%%%%%%%%%%%%%%%%%%%%%%%%%%%%%%%%%%%%%%%%%%%%%%%%%%%%%%%%%%%%%%%%%%%%%%
%%%%%%%%%%%%%%%%%%%%%%%%%%%%%%%%%%%%%%%%%%%%%%%%%%%%%%%%%%%%%%%%%%%%%%%%%%%%%%%%%%%%%%%%%%%%%%%%%%%%%%%%%%%%%%%%%%%%%%%%%%%%%%%%%%%%%%%%%%%%%%%%%%%%%%%%
%%%%%%%%%%%%%%%%%%%%%%%%%%%%%%%%%%%%%%%%%%%%%%%%%%%%%%%%%%%%%%%%%%%%%%%%%%%%%%%%%%%%%%%%%%%%%%%%%%%%%%%%%%%%%%%%%%%%%%%%%%%%%%%%%%%%%%%%%%%%%%%%%%%%%%%%
%%%%%%%%%%%%%%%%%%%%%%%%%%%%%%%%%%%%%%%%%%%%%%%%%%%%%%%%%%%%%%%%%%%%%%%%%%%%%%%%%%%%%%%%%%%%%%%%%%%%%%%%%%%%%%%%%%%%%%%%%%%%%%%%%%%%%%%%%%%%%%%%%%%%%%%%
%%%%%%%%
%%%%%%%%
%%%%%%%%            2. Derivatives of the Riemann zeta function
%%%%%%%%
%%%%%%%%
%%%%%%%%%%%%%%%%%%%%%%%%%%%%%%%%%%%%%%%%%%%%%%%%%%%%%%%%%%%%%%%%%%%%%%%%%%%%%%%%%%%%%%%%%%%%%%%%%%%%%%%%%%%%%%%%%%%%%%%%%%%%%%%%%%%%%%%%%%%%%%%%%%%%%%%%
%%%%%%%%%%%%%%%%%%%%%%%%%%%%%%%%%%%%%%%%%%%%%%%%%%%%%%%%%%%%%%%%%%%%%%%%%%%%%%%%%%%%%%%%%%%%%%%%%%%%%%%%%%%%%%%%%%%%%%%%%%%%%%%%%%%%%%%%%%%%%%%%%%%%%%%%
%%%%%%%%%%%%%%%%%%%%%%%%%%%%%%%%%%%%%%%%%%%%%%%%%%%%%%%%%%%%%%%%%%%%%%%%%%%%%%%%%%%%%%%%%%%%%%%%%%%%%%%%%%%%%%%%%%%%%%%%%%%%%%%%%%%%%%%%%%%%%%%%%%%%%%%%
%%%%%%%%%%%%%%%%%%%%%%%%%%%%%%%%%%%%%%%%%%%%%%%%%%%%%%%%%%%%%%%%%%%%%%%%%%%%%%%%%%%%%%%%%%%%%%%%%%%%%%%%%%%%%%%%%%%%%%%%%%%%%%%%%%%%%%%%%%%%%%%%%%%%%%%%
\newcommand\POB{ {{\partial}\over {\partial{\overline \Omega}}} }
\newcommand\PZB{ {{\partial}\over {\partial{\overline Z}}} }
\newcommand\PX{ {{\partial}\over{\partial X}} }
\newcommand\PY{ {{\partial}\over {\partial Y}} }
\newcommand\PU{ {{\partial}\over{\partial U}} }
\newcommand\PV{ {{\partial}\over{\partial V}} }
\newcommand\PO{ {{\partial}\over{\partial \Omega}} }
\newcommand\PZ{ {{\partial}\over{\partial Z}} }
\newcommand\PW{ {{\partial}\over{\partial W}} }
\newcommand\PWB{ {{\partial}\over {\partial{\overline W}}} }
\newcommand\OVW{\overline W}
\newcommand\Rg{{\mathfrak R}_n}

\begin{section}{{\bf Derivatives of the Riemann zeta function}}
\setcounter{equation}{0}
\vskip 3mm
The Riemann zeta function
\begin{equation*}
\zeta (s):= \sum_{n=1}^\infty \frac{1}{n^s},\quad s\in\BC
\end{equation*}
converges absolutely for ${\rm Re} (s)>1.$ It is well known that $\zeta (s)$ has the following properties\,:
\vskip 3mm\noindent
$(a)\ \zeta (s)$ has a meromorphic continuation to the whole complex plane with a simple pole at $s=1.$
Thus
\begin{equation}
\zeta (s)=\frac{1}{s-1}+a_0+a_1(s-1)+\cdots+a_k (s-1)^k+\cdots.
\end{equation}
\vskip 3mm\noindent
$(b)\ \zeta (s)$ has the functional equation
\begin{equation}
\Lambda (s):=\pi^{-s/2} \Gamma (s/2) \zeta(s)=\Lambda (1-s).
\end{equation}
\vskip 3mm\noindent
$(c)$ If $k\in\BZ^+$ is a positive even integer,
\begin{equation}
\zeta (k)=-(2\pi i)^k \cdot\frac{1}{2}\cdot\frac{B_k}{k!}.
\end{equation}

\vskip 3mm\noindent
$(d)\ \zeta (3)$ is irrational. This fact was proved by Roger Ap{\'e}ry (1916-1994) \cite{V}.
\vskip 3mm\noindent
$(e)$ In 1734, Leonard Euler (1707-1783) proved that
\begin{equation}
\frac{\zeta(2r)}{\pi^{2r}}\in\BQ \quad {\rm if}\ r\in\BZ^+.
\end{equation}

\vskip 3mm\noindent
$(f)$ Let
\begin{equation}
\gamma:=\lim_{n\lrt\infty} \left( \sum_{k=1}^n \frac{1}{k}-\log n \right)
\end{equation}
be the Euler-Mascheroni constant. Then $\gamma$ can be expressed as
\begin{equation}
\gamma=\sum_{k=2}^\infty (-1)^k \frac{\zeta(k)}{k}.
\end{equation}

\begin{remark}
In 1873, Charles Hermite (1823-1901) proved that $e$ is transcendental.
In 1882, Ferdinand von Lindemann (1852-1939) proved that $\pi$ is transcendental. But up to now we have no idea that $\gamma$ is irrational or transcendental. Three numbers $\pi,\,e$ and $\gamma$ are called the
${\sf Holy\ Trinity}$.
\end{remark}
\vskip 3mm
We have infinite product formulas containing the Holy Trinity $\pi,\,e$ and $\gamma$. First in 1997 H.\,S. Wilf \cite{Wi} obtained the following infinite product formula
\begin{equation}
\prod_{k=1}^\infty \left\{ e^{-1/k} \left( 1+\frac{1}{k}+\frac{1}{2\,k^2}\right)\right\}
=\frac{e^{\pi/2}+e^{-\pi/2}}{\pi\cdot e^{\gamma}}.
\end{equation}
Secondly J. Choi, J. Lee and H.\,M. Srivastava \cite{CS3} obtained the following infinite product formulas
generalizing the Wilf's formula (2.9)
\begin{equation}
\prod_{k=1}^\infty \left\{ e^{-1/k} \left( 1+\frac{1}{k}+\frac{\alpha^2+\frac{1}{4}}{k^2}\right)\right\}
=\frac{2(e^{\alpha\pi}+e^{-\alpha\pi})}{(4\alpha^2+1)\,\pi\cdot e^{\gamma}},\quad \alpha\in\BC,\
\alpha\neq \pm \frac{1}{2}i
\end{equation}
and
\begin{equation}
\prod_{k=1}^\infty \left\{ e^{-2/k} \left( 1+\frac{2}{k}+\frac{\beta^2+1}{k^2}\right)\right\}
=\frac{e^{\beta\pi}-e^{-\beta\pi}}{2\beta(\beta^2+1)\,\pi\cdot e^{2\gamma}},\quad \beta\in\BC,\
\alpha\neq \pm i.
\end{equation}

\begin{remark}
C.-P. Chen and R. Paris \cite{C} generalized the above two infinite products formulas (2.8) and (2.9).
\end{remark}

\begin{definition}
(1) The ${\sf Glaisher}$-${\sf Kinkelin\ constant}\ \mathcal A$ is defined to be
\begin{equation*}
\log \mathcal A :=\lim_{n\lrt \infty} \left\{ \left( \sum_{k=1}^n k\log k\right)-
\left( \frac{n^2}{2}+\frac{n}{2}+\frac{1}{12}\right)\log n +\frac{n^2}{4}\right\}.
\end{equation*}
We note that $\mathcal A \approx 1.282427130\cdots.$
\vskip 2mm\noindent
(2) The ${\sf Catalan\ constant}\ \mathscr G$ is defined to be
\begin{equation*}
  \mathscr G :=\sum_{k=0}^\infty \frac{(-1)^k}{(2k+1)^2} \approx 0.9159655\cdots.
\end{equation*}
\end{definition}

\begin{definition}
The ${\sf Hurwitz\ zeta\ function}\ \zeta(s,a)$ is defined by
\begin{equation*}
\zeta (s,a):=\sum_{k=0}^\infty (k+a)^{-s},\quad {\rm Re} (s)>1,\ a\neq -1,-2,\cdots.
\end{equation*}
\end{definition}

\vskip 3mm
Choi, Srivastava et al. obtained the following results involving the values of the derivatives of
$\zeta (s)$.

\begin{theorem}
\begin{equation}
  \mathcal A = \exp \left( -\zeta'(-1) + \frac{1}{12} \right),
\end{equation}
equivalently,
\begin{equation}
\zeta'(-1)=\lim_{n\lrt \infty} \left\{ -\sum_{k=1}^n k\log k +
\left( \frac{n^2}{2}+\frac{n}{2}+\frac{1}{12}\right)\log n -\frac{n^2}{4} + \frac{1}{12}\right\}.
\end{equation}
\end{theorem}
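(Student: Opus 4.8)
The plan is to establish (2.11) directly; since $\log\mathcal A$ is \emph{defined} by the limit in Definition 2.1(1), formula (2.10) is then just a restatement. The engine is the Euler--Maclaurin summation formula applied to $f(x)=x^{-s}$, which simultaneously furnishes the meromorphic continuation of $\zeta$ and an explicit asymptotic expansion of the partial sums $\sum_{k=1}^{n}k^{-s}$.

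Fix an integer $m\ge 2$. Splitting the Euler--Maclaurin formula for $\sum_{k=1}^{n}f(k)$ into its $n$-dependent part and its $n$-free part, and identifying the latter with $\zeta(s)$ by analytic continuation (valid for $\mathrm{Re}(s)>1-2m$, in particular near $s=-1$), one obtains
\[
\sum_{k=1}^{n}k^{-s}=\zeta(s)+\frac{n^{1-s}}{1-s}+\frac{n^{-s}}{2}-\frac{s}{12}\,n^{-s-1}+R_m(s,n),
\]
where $R_m(s,n)$ consists of the finite sum $-\sum_{j=2}^{m}\frac{B_{2j}}{(2j)!}\,s(s+1)\cdots(s+2j-2)\,n^{-s-2j+1}$ together with an Euler--Maclaurin remainder integral. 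For $m\ge2$ the exponents $-s-2j+1$ stay $\le -2$ near $s=-1$ and the remainder integrand is dominated by $x^{-3}$, so both $R_m(s,n)$ and $\partial_s R_m(s,n)$ tend to $0$ as $n\lrt\infty$, locally uniformly for $s$ near $-1$. This uniformity is what licenses differentiating the displayed identity in $s$ under the limit, and it is the single genuinely technical point; everything else is computation.

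Differentiating in $s$, using $\partial_s k^{-s}=-k^{-s}\log k$, and then setting $s=-1$ gives
\[
-\sum_{k=1}^{n}k\log k=\zeta'(-1)+\Big(\tfrac{n^{2}}{4}-\tfrac{n^{2}}{2}\log n\Big)+\Big(-\tfrac{n}{2}\log n\Big)+\Big(-\tfrac{1}{12}-\tfrac{1}{12}\log n\Big)+o(1),
\]
the three parenthesized quantities being the $s$-derivatives at $s=-1$ of $\frac{n^{1-s}}{1-s}$, of $\frac{n^{-s}}{2}$ and of $-\frac{s}{12}n^{-s-1}$, each evaluated from $\frac{d}{ds}n^{-s}=-n^{-s}\log n$ and the product rule. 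Rearranging,
\[
\sum_{k=1}^{n}k\log k-\Big(\tfrac{n^{2}}{2}+\tfrac{n}{2}+\tfrac{1}{12}\Big)\log n+\frac{n^{2}}{4}=-\zeta'(-1)+\tfrac{1}{12}+o(1).
\]
Letting $n\lrt\infty$ and comparing with Definition 2.1(1) gives $\log\mathcal A=-\zeta'(-1)+\tfrac{1}{12}$, which is (2.10); solving for $\zeta'(-1)$ and reinserting the defining limit for $\log\mathcal A$ gives (2.11). The one obstacle worth flagging is, as noted, the justification of term-by-term differentiation of the Euler--Maclaurin expansion in $s$ near $s=-1$, which is exactly why one keeps $m\ge2$ Bernoulli terms so that the tail and its $s$-derivative remain uniformly negligible.
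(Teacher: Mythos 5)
Your proposal is correct, and it is worth noting that the paper itself offers no argument for Theorem 2.1 beyond the citation to \cite{CS1}; your Euler--Maclaurin derivation is therefore a self-contained replacement rather than a paraphrase of the paper's proof. The computation checks out: with $f(x)=x^{-s}$ the Euler--Maclaurin decomposition both continues $\zeta$ past $s=-1$ (for $m\ge 2$) and isolates the $n$-dependent terms $\frac{n^{1-s}}{1-s}+\frac{n^{-s}}{2}-\frac{s}{12}n^{-s-1}$, whose $s$-derivatives at $s=-1$ are exactly $\frac{n^{2}}{4}-\frac{n^{2}}{2}\log n$, $-\frac{n}{2}\log n$ and $-\frac{1}{12}-\frac{1}{12}\log n$, so the rearranged limit reproduces Definition 2.1(1) and yields $\log\mathcal A=-\zeta'(-1)+\frac{1}{12}$, hence (2.10) and (2.11). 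You also correctly identify and discharge the only delicate point, namely that keeping $m\ge 2$ Bernoulli terms makes the tail and its $s$-derivative $O(n^{-2+\varepsilon})$ locally uniformly near $s=-1$, which justifies differentiating the expansion before letting $n\to\infty$ (alternatively one can invoke local uniform convergence of analytic functions and Cauchy estimates). By contrast, the route in Choi--Srivastava \cite{CS1}, to which the paper defers, goes through series associated with the zeta function and the Glaisher--Kinkelin constant via log-Gamma and Barnes-type considerations; your argument is more elementary and has the added benefit of making transparent why the specific normalizing terms $\left(\frac{n^{2}}{2}+\frac{n}{2}+\frac{1}{12}\right)\log n-\frac{n^{2}}{4}$ appear in the definition of $\mathcal A$.
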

\begin{proof} The proof can be found in \cite{CS1}.\end{proof}

\begin{theorem}
\begin{equation}
  \zeta'(2)=\pi^2 \left\{ \frac{\gamma}{6}+ \frac{1}{6}\log (2\pi) -2\,\log \mathcal A \right\},
\end{equation}
equivalently,
\begin{equation}
\gamma=6\cdot \frac{\zeta'(2)}{\pi^2} + \log \frac{\mathcal A^{12}}{2\,\pi}.
\end{equation}
\end{theorem}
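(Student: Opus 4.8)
The plan is to express $\zeta'(2)$ in terms of $\zeta'(-1)$ by logarithmically differentiating the functional equation (2.2), and then to substitute the value of $\zeta'(-1)$ furnished by Theorem 2.1. Writing $\psi:=\Gamma'/\Gamma$ for the digamma function, I would start from
\[
\pi^{-s/2}\,\Gamma(s/2)\,\zeta(s)=\pi^{-(1-s)/2}\,\Gamma\!\big((1-s)/2\big)\,\zeta(1-s)
\]
and take the logarithmic derivative of both sides, which gives
\[
-\tfrac12\log\pi+\tfrac12\,\psi(s/2)+\frac{\zeta'(s)}{\zeta(s)}
=\tfrac12\log\pi-\tfrac12\,\psi\!\big((1-s)/2\big)-\frac{\zeta'(1-s)}{\zeta(1-s)}.
\]
Since the only poles of $\Lambda$ lie at $s=0$ and $s=1$, while $\zeta$ does not vanish at $s=-1$, the above identity and its termwise decomposition are valid at $s=-1$; this is the one place where a little care is required, but no serious analytic difficulty arises.

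Next I would insert the standard special values. From (2.3) with $k=2$ and $B_2=\tfrac16$ one gets $\zeta(2)=\pi^2/6$, and then (2.2) forces $\zeta(-1)=-\tfrac1{12}$. For the archimedean factor I would use $\psi(1)=-\gamma$, $\psi(\tfrac12)=-\gamma-2\log 2$, and $\psi(-\tfrac12)=\psi(\tfrac12)+2$, the last of these coming from the recurrence $\psi(x+1)=\psi(x)+1/x$. Substituting $s=-1$ and simplifying should collapse the differentiated functional equation to the linear relation
\[
\frac{6\,\zeta'(2)}{\pi^2}=\gamma+\log(2\pi)-1+12\,\zeta'(-1).
\]

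Finally I would invoke Theorem 2.1 in the form $\zeta'(-1)=\tfrac1{12}-\log\mathcal A$, so that $12\,\zeta'(-1)=1-12\log\mathcal A$; this cancels the isolated $-1$ and leaves $6\,\zeta'(2)/\pi^2=\gamma+\log(2\pi)-12\log\mathcal A$, which rearranges at once to the first displayed identity of the theorem, $\zeta'(2)=\pi^2\big(\tfrac{\gamma}{6}+\tfrac16\log(2\pi)-2\log\mathcal A\big)$. The equivalent form follows by solving this for $\gamma$ and writing $12\log\mathcal A-\log(2\pi)=\log\!\big(\mathcal A^{12}/(2\pi)\big)$. The whole argument is elementary once the functional equation has been differentiated; the only things to be careful about are the bookkeeping of the $\Gamma$-factor, the digamma values at $\pm\tfrac12$, and the (harmless) check that $s=-1$ is a point at which $\Lambda$ is holomorphic and nonvanishing.
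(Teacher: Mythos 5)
Your argument is correct, and it checks out line by line: differentiating $\log\Lambda(s)$ in the form $-\tfrac12\log\pi+\tfrac12\psi(s/2)+\zeta'(s)/\zeta(s)$ against $\tfrac12\log\pi-\tfrac12\psi((1-s)/2)-\zeta'(1-s)/\zeta(1-s)$, evaluating at $s=-1$ with $\psi(1)=-\gamma$, $\psi(-\tfrac12)=-\gamma-2\log 2+2$, $\zeta(2)=\pi^2/6$, $\zeta(-1)=-\tfrac1{12}$, one does get $6\,\zeta'(2)/\pi^2=\gamma+\log(2\pi)-1+12\,\zeta'(-1)$, and Theorem 2.1 in the form $\zeta'(-1)=\tfrac1{12}-\log\mathcal A$ then yields both displayed identities. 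Your attention to the legitimacy of evaluating at $s=-1$ (holomorphy of $\Lambda$ there, $\zeta(-1)\neq 0$, $\Gamma(-\tfrac12)$ finite and nonzero) covers the only delicate point.

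Note, however, that the paper does not prove this theorem internally: it simply cites Choi--Srivastava \cite{CS2} and Chen \cite{C}, where the identity is obtained by a different route, essentially through evaluations of log-gamma integrals and series manipulations tied to the definition of the Glaisher--Kinkelin constant. Your derivation is therefore a genuinely different (and in the context of this paper more self-contained) argument: it uses only ingredients already recorded in Section 2, namely the functional equation (2.2), the value $\zeta(2)=\pi^2/6$ from (2.3), and Theorem 2.1, and it makes transparent that Theorem 2.2 is equivalent to Theorem 2.1 modulo the functional equation. What the cited approach buys instead is independence from Theorem 2.1 (it produces the $\zeta'(2)$ evaluation directly from integral representations), which is why the two statements appear as separate results in the source papers.
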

\begin{proof} The proof can be found in \cite[p.\,441]{CS2} or \cite[p.\,129]{C}.\end{proof}

\begin{theorem}
\begin{equation}
  \zeta'(-1,\frac{1}{4})=-\frac{1}{96} + \frac{1}{8} \log \mathcal A + \frac{\mathscr G}{4\pi}.
\end{equation}
\end{theorem}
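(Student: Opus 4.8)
The plan is to pass from the Hurwitz value $\zeta'(-1,\tfrac14)$ to the Riemann zeta function and the Dirichlet beta function $\beta(s):=\sum_{n=0}^{\infty}(-1)^{n}(2n+1)^{-s}$, which is entire with $\beta(2)=\mathscr G$, and then to combine Theorem 2.1 (which supplies $\zeta'(-1)=\tfrac1{12}-\log\mathcal A$) with the functional equation of $\beta$ (which supplies $\beta'(-1)=2\mathscr G/\pi$).

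First I would establish, for ${\rm Re}(s)>1$ and hence for all $s\neq1$ by analytic continuation, the two identities
\[
\zeta\!\left(s,\tfrac14\right)+\zeta\!\left(s,\tfrac34\right)=(4^{s}-2^{s})\,\zeta(s),\qquad
\zeta\!\left(s,\tfrac14\right)-\zeta\!\left(s,\tfrac34\right)=4^{s}\,\beta(s),
\]
obtained by writing $\zeta(s,\tfrac14)=4^{s}\sum_{n\geq0}(4n+1)^{-s}$ and $\zeta(s,\tfrac34)=4^{s}\sum_{n\geq0}(4n+3)^{-s}$ and grouping the odd integers according to their residue modulo $4$ (the sum of the two classes being $4^{s}(1-2^{-s})\zeta(s)$, the difference being $4^{s}\beta(s)$). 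Adding them yields the key identity
\[
\zeta\!\left(s,\tfrac14\right)=\tfrac12\bigl[(4^{s}-2^{s})\,\zeta(s)+4^{s}\,\beta(s)\bigr];
\]
as a sanity check, at $s=-1$ this gives $\zeta(-1,\tfrac14)=\tfrac1{96}=-\tfrac12 B_{2}(\tfrac14)$.

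Next I would differentiate the key identity in $s$ and set $s=-1$. Since $4^{-1}\log4=2^{-1}\log2=\tfrac12\log2$, the contribution of $\tfrac{d}{ds}(4^{s}-2^{s})$ cancels, and since $\beta(-1)=0$ (this is $\tfrac12E_{1}$, and is also visible from the functional equation below) the contribution of $\tfrac{d}{ds}(4^{s})$ in the $\beta$-term vanishes as well; what survives is
\[
\zeta'\!\left(-1,\tfrac14\right)=-\tfrac18\,\zeta'(-1)+\tfrac18\,\beta'(-1).
\]
By Theorem 2.1 the first summand equals $-\tfrac18\bigl(\tfrac1{12}-\log\mathcal A\bigr)=-\tfrac1{96}+\tfrac18\log\mathcal A$, so it remains only to show $\beta'(-1)=\dfrac{2\mathscr G}{\pi}$, which contributes $\tfrac18\beta'(-1)=\dfrac{\mathscr G}{4\pi}$ and completes the proof.

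For $\beta'(-1)$ I would invoke the functional equation of $\beta=L(\cdot,\chi_{4})$ for the odd character $\chi_{4}$ modulo $4$, whose root number is $1$: with $\Lambda(s):=(4/\pi)^{(s+1)/2}\Gamma\!\bigl(\tfrac{s+1}{2}\bigr)\beta(s)$ one has $\Lambda(s)=\Lambda(1-s)$, i.e. $\beta(1-s)=F(s)\beta(s)$ with $F(s)=(4/\pi)^{(2s-1)/2}\,\Gamma\!\bigl(\tfrac{s+1}{2}\bigr)/\Gamma\!\bigl(1-\tfrac s2\bigr)$ (the normalisation being pinned down by the check $\beta(0)=\tfrac12$, $\beta(1)=\tfrac\pi4$ at $s=1$). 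At $s=2$ the factor $1/\Gamma(1-s/2)$ has a simple zero, so $F(2)=0$ and $F(s)=-\tfrac12(s-2)\,(4/\pi)^{3/2}\Gamma\!\bigl(\tfrac32\bigr)+O\!\bigl((s-2)^{2}\bigr)$, whence $F'(2)=-\tfrac2\pi$; differentiating $\beta(1-s)=F(s)\beta(s)$ at $s=2$ then gives $-\beta'(-1)=F'(2)\beta(2)=-\tfrac2\pi\mathscr G$, as required. The one place calling for care is precisely this last computation: one must fix the root number correctly and expand $1/\Gamma(1-s/2)$ to first order at its zero $s=2$ rather than substitute naively; everything else is elementary once Theorem 2.1 is available. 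Alternatively, one may simply cite the evaluation of $\beta'(-1)$ (equivalently $L'(-1,\chi_{4})$) and the derivation in \cite{CS2}.
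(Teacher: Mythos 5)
Your derivation is correct and complete: the decomposition $\zeta(s,\tfrac14)=\tfrac12\bigl[(4^{s}-2^{s})\zeta(s)+4^{s}\beta(s)\bigr]$ is valid for all $s\neq 1$ by continuation, the coefficient $4^{-1}\log 4-2^{-1}\log 2$ does vanish, $\beta(-1)=0$, and your functional-equation computation is sound (the root number of $\chi_4$ is $1$, $F(2)=0$, $F'(2)=-\tfrac12(4/\pi)^{3/2}\Gamma(\tfrac32)=-\tfrac{2}{\pi}$, hence $\beta'(-1)=\tfrac{2\mathscr G}{\pi}$), so that with $\zeta'(-1)=\tfrac1{12}-\log\mathcal A$ from Theorem 2.1 one gets exactly $\zeta'(-1,\tfrac14)=-\tfrac1{96}+\tfrac18\log\mathcal A+\tfrac{\mathscr G}{4\pi}$. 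Note, however, that the paper does not prove this statement at all: its ``proof'' is the citation \cite[p.\,440]{CS2}, where the identity arises in the study of log-gamma integrals alongside the facts recorded here as Theorems 2.1 and 2.2. So your route is genuinely different from the source the paper leans on: you reduce the Hurwitz value by an elementary Dirichlet-series splitting to $\zeta'(-1)$ and $\beta'(-1)=L'(-1,\chi_4)$, and then evaluate the latter from the functional equation of the odd character modulo $4$. What this buys is a short, self-contained argument (modulo Theorem 2.1) that makes visible why precisely the constants $\mathcal A$, $\mathscr G$ and $\pi$ enter; what the citation route buys is the placement of the identity within the larger family of closed-form log-gamma and Hurwitz-zeta evaluations treated in \cite{CS2}. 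The one step you flagged as delicate, the first-order expansion of $1/\Gamma(1-s/2)$ at $s=2$, is handled correctly in your proposal.
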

\begin{proof} The proof can be found in \cite[p.\,440]{CS2}.\end{proof}

\vskip 3mm
Now we briefly describe the work of S. Kudla, M. Rapoport and T. Yang \cite{KuRY, KuRY1}. Let $B$ be an indefinite division algebra over $\BQ$ and let $\mathcal{O}_B$ be a maximal order in $B$. Let $D:=D(B)$ be the product of all primes at which $B$ is division. Let $\mathcal{M}$ be the moduli space of abelian surfaces with a special action of $\mathcal{O}_B$. Then $\mathcal{M}$ is an integral model of the Shimura curve attached to $B$. We may regard $\mathcal{M}$ as an arithmetic surface in the sense of `Arakelov theory \cite{F, Bos}.
The moduli stack $\mathcal{M}$ carries a universal abelian variety $\mathscr A /\mathcal{M}$, and the Hodge line bundle $\omega$ on $\mathcal{M}$ is defined
\begin{equation}
  \omega=\bigwedge^2 \left( {\rm Lie}(\mathscr A)\right)^*.
\end{equation}
We equip $\omega$ with the metric $\|\,\cdot\,\|$ which, for $z\in \mathcal{M} (\BC)$, is given by
\begin{equation}
\| \beta \|^2_z=e^{-2\,C}\,\frac{1}{4\pi^2}\,\int_{{\mathscr A}_z (\BC)} \beta \wedge \bar{\beta},
\end{equation}
where
\begin{equation}
C:=\frac{1}{2} \left( \log(4\pi) + \gamma \right),\qquad \gamma\ {\rm is\ the\ Euler\ constant}.
\end{equation}
Thus we obtain a class $\widehat{\omega}=(\omega,\|\,\cdot\,\|)\in \widehat{{\rm Pic}}(\mathcal{M}).$
We define the constant $\kappa$ by the relation
\begin{equation}
\frac{1}{2}\,{\rm deg}(\widehat{\omega})\cdot \kappa=\langle \widehat{\omega},\widehat{\omega} \rangle -\zeta_D(-1) \left[ 2 {{\zeta'(-1)}\over {\zeta(-1)}} +1-2\,C - \sum_{p| D}
{{p+1}\over {p-1}}\cdot \log (p) \right],
\end{equation}
where $\zeta_D (s):=\zeta (s) \prod_{p|D} (1-p^{-s}).$
Also writing $\widehat{\omega}$ for the image of $\widehat{\omega}$ in $\widehat{\rm CH}^1 (\mathcal{M})$, we set
\begin{equation}
\widehat{\mathcal Z} (0,v):=-\widehat{\omega}-(0,\log (v))+(0,\kappa) \in \widehat{\rm CH}^1 (\mathcal{M}),
\qquad v\in\BR\ {\rm with}\ v>0.
\end{equation}
\vskip 3mm
For each $m\in\BZ$ and for $v\in \BR$ with $v>0$, we define
\begin{equation}
\widehat{\mathcal Z}(m,v):=({\mathcal Z}(m), \Xi (m,v))\in \widehat{\rm CH}^1 (\mathcal{M}).
\end{equation}
Here, for $m=0,\ \widehat{\mathcal Z}(0,v)$ is defined by the formula (2.19),
for $m>0,\ {\mathcal Z}(m)$ is the divisor on $\mathcal{M}$ corresponding to those
$\mathcal O_B$-abelian surfaces which admit a special endomorphism $\alpha$ with $\alpha^2=-m,$ and for $m<0,\ {\mathcal Z}(m)=\emptyset$. For all $m\neq 0,\ \Xi(m,v)$ is the nonstandard Green's function introduced by Kudla \cite{Ku}.
\vskip 2mm
Using the Gillet-Soul{\'e} height pairing $\langle\,\, ,\,\,\rangle$ between $\widehat{\rm CH}^1 (\mathcal{M})$ and $\widehat{{\rm Pic}}(\mathcal{M})$ \cite{GS}, we form the height generating series
\begin{equation}
\phi_{\rm ht}(\tau):=\sum_{m\in\BZ} \langle \widehat{\mathcal Z}(m,y),\widehat{\omega} \rangle \,
e^{2\pi i m\tau}, \quad \tau=x+iy\in \mathbb H\ {\rm with}\ x,y\in\BR.
\end{equation}
The quantities $\langle \hat{\mathcal Z}(m,y),\widehat{\omega} \rangle$ can be thought of as arithmetic degres.

\vskip 3mm
Assume $D:=D(B)>1.$ Kudla, Rapoport and Yang \cite[Theorem A or Theorem 7.2]{KuRY} proved that
\begin{equation}
\phi_{\rm ht}(\tau)={\mathcal E}'(\tau,\frac{1}{2};D),
\end{equation}
where ${\mathcal E}(\tau,s;D)$ is the Eisenstein series of weight $\frac{3}{2}$. We refer to the formula (6.39) in \cite{KuRY} for the precise definition of ${\mathcal E}(\tau,s;D)$. Moreover, they proved that $\kappa=0$, i.e.,  that
\begin{equation}
\langle \widehat{\omega},\widehat{\omega} \rangle =\zeta_D(-1) \left[ 2 {{\zeta'(-1)}\over {\zeta(-1)}} +1-2\,C - \sum_{p| D}
{{p+1}\over {p-1}}\cdot \log (p) \right].
\end{equation}
We refer to \cite{KuRY} or \cite[Theorem 7.1.1]{KuRY1} for the proof of (2.23).
According to Theorem 2.2 and the formula (2.23), we obtain
\begin{equation}
{{\langle \widehat{\omega},\widehat{\omega} \rangle}\over {\zeta_D(-1)}}=  2 {{\zeta'(-1)}\over {\zeta(-1)}} +1 -\log (4\pi) - \frac{6}{\pi} \zeta'(2)-\log \left( \frac{\mathcal A^{12}}{2\pi}\right)- \sum_{p| D}{{p+1}\over {p-1}}\cdot \log (p).
\end{equation}

\end{section}

\vskip 10mm

%%%%%%%%%%%%%%%%%%%%%%%%%%%%%%%%%%%%%%%%%%%%%%%%%%%%%%%%%%%%%%%%%%%%%%%%%%%%%%%%%%%%%%%%%%%%%%%%%%%%%%%%%%%%%%%%%%%%%%%%%%%%%%%%%%%%%%%%%%%%%%%%%%%%%%%%
%%%%%%%%%%%%%%%%%%%%%%%%%%%%%%%%%%%%%%%%%%%%%%%%%%%%%%%%%%%%%%%%%%%%%%%%%%%%%%%%%%%%%%%%%%%%%%%%%%%%%%%%%%%%%%%%%%%%%%%%%%%%%%%%%%%%%%%%%%%%%%%%%%%%%%%%
%%%%%%%%
%%%%%%%%
%%%%%%%%            3. Derivatives of the Hasse-Weil $L$-function
%%%%%%%%
%%%%%%%%
%%%%%%%%%%%%%%%%%%%%%%%%%%%%%%%%%%%%%%%%%%%%%%%%%%%%%%%%%%%%%%%%%%%%%%%%%%%%%%%%%%%%%%%%%%%%%%%%%%%%%%%%%%%%%%%%%%%%%%%%%%%%%%%%%%%%%%%%%%%%%%%%%%%%%%%%
%%%%%%%%%%%%%%%%%%%%%%%%%%%%%%%%%%%%%%%%%%%%%%%%%%%%%%%%%%%%%%%%%%%%%%%%%%%%%%%%%%%%%%%%%%%%%%%%%%%%%%%%%%%%%%%%%%%%%%%%%%%%%%%%%%%%%%%%%%%%%%%%%%%%%%%%
%%%%%%%%%%%%%%%%%%%%%%%%%%%%%%%%%%%%%%%%%%%%%%%%%%%%%%%%%%%%%%%%%%%%%%%%%%%%%%%%%%%%%%%%%%%%%%%%%%%%%%%%%%%%%%%%%%%%%%%%%%%%%%%%%%%%%%%%%%%%%%%%%%%%%%%%
%%%%%%%%%%%%%%%%%%%%%%%%%%%%%%%%%%%%%%%%%%%%%%%%%%%%%%%%%%%%%%%%%%%%%%%%%%%%%%%%%%%%%%%%%%%%%%%%%%%%%%%%%%%%%%%%%%%%%%%%%%%%%%%%%%%%%%%%%%%%%%%%%%%%%%%%
\begin{section}{{\bf Derivatives of the Hasse-Weil $L$-function of an elliptic curve}}
\setcounter{equation}{0}

\vskip 0.35cm
In this section we review some basic materials about elliptic curves (cf.\,\cite{Si}) and some results about the derivatives of the Hasse-Weil $L$-function of an elliptic curve.

\vskip 0.5cm \noindent
{\bf 3.1. Basic notions}
\vskip 0.35cm
We shall call a nonsingular projective curve $E$ over a field $K$ of genus one an {\it elliptic curve}
over $K$. Then $E$ has a unique algebraic group structure with identity element $\infty$. The group law on the
set $E(K)$ of $K$-rational points (or more generally any extension of $K$) is defined by
$$P+Q+R=\infty,\qquad P,Q,R\in E(K)$$
if $P,Q,R$ are collinear. It is known that an elliptic curve $E$ can be embedded into the two dimensional projective
space ${\mathbb P}^2$ as a cubic curve defined by the following Weierstrass equation\,:
\begin{equation}
E\,:\quad y^2+a_1xy+a_3y=x^3+a_2x^2+a_4x+a_6,\quad a_1,\cdots,a_6\in K
\end{equation}
with its nonzero discriminant (which will be defined later)
$$\Delta_E=-b_2^2b_8-8b_4^3-27b_6^2+9b_2b_4b_6\neq 0,$$
where
$$b_2=a_1^2+4a_2,\quad b_4=2a_4+a_1a_3,\quad b_6=a_3^2+4a_6,$$
$$b_8=a_1^2a_6+4a_2a_6-a_1a_3a_4+a_2a_3^2-a_4^2={\frac 14}\left(b_2b_6-b_4^2\right).$$
We define the $j$-invariant $j(E)$ of $E$ by
\begin{equation}
j(E):={ {\left( b_2^2-24\,b_4\right)^3}\over {\Delta_E}}
\end{equation}
The invariant differential $\omega_E$ associated with the Weierstrass equation (3.1) is given by
\begin{equation}
\omega_E:={ {dx}\over{2y+a_1x+a_3}}={{dy}\over{3x^2+2a_2x+a_4-a_1y}}
\end{equation}

\vskip 0.2cm \noindent
{\bf Case I}. ${\rm char}(K)\neq 2,3$\,:
\vskip 0.1cm Using the transformation
$$x\mapsto u^2x'+r,\quad y\mapsto u^3y'+su^2x'r+6,$$
the equation (3.1) can be simplified to the following form
$$y^2=x^3+ax+b,\quad \Delta = -16 \left( 4a^3+27b^2\right)\neq 0.$$

\vskip 0.2cm \noindent
{\bf Case II}. ${\rm char}(K)= 2$\,:
\vskip 0.1cm
We see that $j(E)=0$ if and only if $a_1\neq 0$. If $a_1\neq 0$, choosing suitably $r,s,t,$ we can achieve
$a_1=1,\,a_3=0,\,a_4=0$ and the equation (3.1) takes the form
$$y^2+xy=x^3+a_2x^2+a_6$$
with the condition of smoothness given by $\Delta\neq 0.$ If $a_1=0\, (i.e.,\ j(E)=0),$ the equation (3.1)
transforms to
$$y^2+a_3y=x^3+a_4x+a_6$$
and the condition of smoothness in this case is $a_3\neq 0.$

\vskip 0.2cm \noindent
{\bf Case III}. ${\rm char}(K)= 3$\,:
\vskip 0.1cm
The equation (3.1) can be transformed to
$$y^2=x^3+a_2x^2+a_4x+a_6.$$
Here the multiple roots are disallowed.

\vskip 0.3cm
The proper Weierstrass form is
\begin{equation}
y^2=4x^3-g_2x-g_3
\end{equation}
The discriminant $\Delta$ of the equation (3.4) is given by
\begin{equation}
\Delta=g_2^3-27g_3^2\neq 0.
\end{equation}
The $j$-invariant $j$ of the equation (3.4) is given by
\begin{equation}
j=2^6\cdot 3^3\,{{g_2^3}\over {g_2^3-27g_3^2}}=1728\,{{g_2^3}\over {\Delta}}.
\end{equation}
We can see that two elliptic curves have the same $j$-invariant if and only if they are isomorphic over an algebraic closure
${\overline K}$ of $K$.

\vskip 0.3cm
Let
$${\mathbb H}=\{ \tau\in \BC\,|\ {\rm Im}(\tau)>0\ \}$$
be the Poincar{\`e} upper half plane. Let us fix $\tau\in {\mathbb H}$. We let
$$L_\tau:=\BZ+\BZ \tau=\{ m\tau+n\in\BC\,|\ m,n\in\BZ\,\}$$
be the lattice in $\BC$ with basic period $1$ and $\tau$. We put
\begin{equation}
g_2(\tau):=60 \sum_{\omega\in L_\tau \atop \omega\neq 0}{1\over {\omega^4}},\qquad
g_3(\tau):=140 \sum_{\omega\in L_\tau \atop \omega\neq 0}{1\over {\omega^6}}
\end{equation}
We recall the famous Weierstrass $\wp$-function defined by
\begin{equation}
{\wp}_\tau(z)={\wp}(z;L_\tau)={1\over{z^2}}+ \sum_{\omega\in L_\tau \atop \omega\neq 0}
\left\{ {1\over {(z-\omega)^2}} -{1\over {\omega^2}} \right\}.
\end{equation}
Then we get
\begin{equation}
{\wp}_\tau'(z)={{d{\wp}_\tau}\over {dz}}=-2 \sum_{\omega\in L_\tau }
 {1\over {(z-\omega)^3}}
\end{equation}
The field of elliptic functions with periods $L_\tau$ is generated over $\BC$ by ${\wp}$ and
${\wp}'$. Surprisingly we have the following equation
\begin{equation}
{\wp}_\tau'(z)^2=4{\wp}_\tau(z)^3-g_2(\tau){\wp}_\tau(z)-g_3(\tau).
\end{equation}
Let
\begin{equation}
E_\tau\,:\quad y^2=4x^3-g_2(\tau)x-g_3(\tau)
\end{equation}
be the elliptic curve. Then we see that the complex torus $\BC/L_\tau$ is biholomorphic to $E_\tau(\BC)$
via the map
\begin{equation}
f_\tau: \BC/L_\tau\lrt E_\tau(\BC),\qquad f_\tau(z)=[{\wp}_\tau(z):{\wp}_\tau'(z):1],\ z\in\BC.
\end{equation}
Conversely we consider the differential of the first kind
\begin{equation}
{{dx}\over y}={1\over {\sqrt{4x^3-g_2(\tau)x-g_3(\tau)}} }
\end{equation}
on the Riemann surface $E_\tau(\BC)$. We integrate this form over a path joining a fixed initial point (say $\infty$)
with a variable point. The integral depends on the choice of a path, but its image in $\BC/L_\tau$ is determined
only by the endpoints. More precisely
\begin{equation*}
z-z_0=\int_{{\wp}_\tau(z_0)}^{{\wp}_\tau(z)}{1\over {\sqrt{4w^3-g_2(\tau)w-g_3(\tau)}} }\,dw
\end{equation*}

\vskip 0.3cm
According to Jacobi, we can see that if we set $q:=e^{2\pi i\tau}$, the discriminant
$$\Delta_{E_\tau}=\Delta(\tau)=g_2(\tau)^3-27 g_3(\tau)^2$$
is expressed as
\begin{equation}
\Delta(\tau)=(2\pi)^{12}\,q\prod_{n=1}^\infty (1-q^n)^{24}=(2\pi)^{12}\,\sum_{n=1}^\infty \tau(n)\,q^n\qquad ({\rm Jacobi^{,}s\ identity}).
\end{equation}
The $j$-invariant $j(\tau)=j(E_\tau)$ of $E_\tau$ is given by
\begin{equation}
j(\tau)=1728\,{{g_3(\tau)^3}\over {\Delta(\tau)}}={1\over q}+744+196884\,q+ 21493760\,q^2+\cdots .
\end{equation}

Since $j(\tau)$ takes all complex values, every elliptic curve over $\BC$ is isomorphic to $E_\tau$ for some
$\tau\in {\mathbb H}$. We have the addition theorem for elliptic functions\,:
\begin{equation}
{\wp}_\tau(z_1+z_2)=-{\wp}_\tau(z_1)-{\wp}_\tau(z_2)+
{\frac 14} \left(  { {\wp}_\tau'(z_2)-{\wp}_\tau'(z_1) }\over
{ {\wp}_\tau(z_2)-{\wp}_\tau(z_1) } \right)^2,\qquad z_1,z_2\in \BC.
\end{equation}
The above formula induces the natural additive group structure on the elliptic curve (3.11). More precisely if
$P_1=(x_1,y_1),\ P_2=(x_2,y_2)$ are points of (2.11), the addition $P_3=(x_3,y_3)$ is given by
$$x_3=-x_1-x_2 + {\frac 14} \,\left( {y_2-y_1}\over {x_2-x_1}\right)^2.$$
It is easily seen that $f_\tau({\frac 12})=(\alpha_1,0),\ f_\tau({\frac {\tau}2})=(\alpha_2,0),\
f_\tau({\frac {1+\tau}2})=(\alpha_3,0),$ where $\alpha_1,\,\alpha_2,\,\alpha_3$ are the roots of the polynomial
$4x^3-g_2(\tau)x-g_3(\tau).$

\vskip 0.5cm \noindent
{\bf 3.2. Points of finite order}
\vskip 0.3cm
Let $E$ be an elliptic curve defined over a field $K$. For an integer $n\in\BZ$, we let $E[n]$ be the kernel of the
multiplication by $n$ defined by
\begin{equation}
[n]:E({\overline K})\lrt E({\overline K}), \quad [n](x):= nx,\ x\in E({\overline K}),
\end{equation}
where ${\overline K}$ is the algebraic closure of $K$. The isogeny $[n]$ has degree $n^2$. If $({\rm char}(K),n)=1$,
we have
\begin{equation}
E[n]\cong \BZ/n\BZ\times \BZ/n\BZ.
\end{equation}
However for ${\rm char}(K)=p$ and $n=p^m$, we have
\begin{equation}
E[n]\cong \left( \BZ/p^m\BZ\right)^{b_E},\quad b_E=0 \ {\rm or}\ 1.
\end{equation}
Assume that $({\rm char}(K),n)=1$. Let $K(E[n])$ be the Galois extension of $K$ generated by all the coordinates of all
points of $E[n]$. Then we have the Galois representation
\begin{equation}
\rho_n:{\rm Gal}({\overline K}/K)\lrt GL_2(\BZ/n\BZ)
\end{equation}
whose image is isomorphic to ${\rm Gal}(K(E[n])/K).$ We set $G_K={\rm Gal}({\overline K}/K)$. It is known that the representation
$\det (\rho_n)$ is the cyclotomic character of $G_K$ on the group $\mu_n$ of all $n$-th roots of unity. We observe
that $\mu_n\subset K(E[n]).$

\vskip 0.2cm
For any positive integer $k$ with $k\geq 2$, we put
\begin{equation}
G_{2k}(\tau):={\sum}_{m,n\in\BZ}' {1\over {(m\tau+n)^{2k}} },\quad \tau\in \BH,
\end{equation}
where the symbol $\sum'$ means that the summation runs over all pair of integers $(m,n)$ distinct from $(0,0)$. Then
$G_{2k}$ is a modular form of weight $2k$ with $G_{2k}(0)=2\,\zeta(2k).$ We put
\begin{equation}
E_{2k}(\tau)={{G_{2k}(\tau)}\over {2\zeta(2k)}}=
1-{{4k}\over {B_{2k}} }\sum_{n=1}^\infty \sigma_{2k-1}(n)q^n,
\end{equation}
where $\sigma_m(n)=\sum_{0<d|n}d^m$ is a divisor function and $B_k\,(k=0,1,2,\cdots)$ denotes the $k$-th Bernoulli number
defined by the formal power series expansion
$${x\over {e^x-1}}=\sum_{k=0}^\infty B_k\,{{x^k}\over {k!}}$$
Then $B_{2k+1}=0$ for all $k\geq 1$ and the first few $B_k$ are
$$B_0=1,\ B_1=-{\frac 12},\ B_2={\frac 16},\ B_4=-{\frac 1{30}},\ B_6={\frac 1{42}},\ B_8=-{\frac 1{30}},
\ B_{10}={\frac 5{66}},\cdots $$
We observe that
$$g_2(\tau)=60 G_4(\tau)={\frac 43}\,\pi^4 E_4(\tau)\quad {\rm and}\quad
g_3(\tau)=140 G_6(\tau)=\left({\frac 23}\right)^3 \pi^6\, E_6(\tau).$$

The Weierstrass equation for the elliptic curve
$$\BC/(2\pi i)L_\tau \stackrel{\thicksim}\lrt \BC^*/\langle q^\BZ\rangle,\qquad u\mapsto \exp (u)$$
is given by
$$Y^2=4X^3-{{E_4}\over {12}}X+{{E_6}\over {216}},$$
where
$$X= {\wp}(2\pi i u; 2\pi i L_\tau)\quad {\rm and}\quad Y={\wp}'(2\pi i u; 2\pi i L_\tau).$$
If we substitute
$$X=x+{1\over {12}}\quad {\rm and}\quad Y=x+2y,$$
we get a new equation of this curve with coefficients in $\BZ[[q]]$
$$T(q):\quad y^2=xy=x^3+A(q)x+B(q),$$
where
\begin{eqnarray*}
A(q)&=& -5 \sum_{n=1}^\infty \sigma_3(n)q^n=-5 \sum_{n=1}^\infty {{n^3q^n}\over {1-q^n}},\\
B(q)&=& {\frac 1{12}}\,\left\{ -5\left( {{E_4-1}\over {240}} \right)- 7\left( {{E_6-1}\over {-504}} \right) \right\}
=-{\frac 1{12}}\sum_{n=1}^\infty { {(7n^5+5n^3)q^n}\over {1-q^n} }
\end{eqnarray*}
This equation defines an elliptic curve over $\BZ((q))$ with the canonical differential $\omega_{can}$ given by
$${{dx}\over {2y+x}}={{dX}\over Y}$$
Let $N\in \BZ^+$ be a positive integer. Let us define
$$T\big(q^N \big):\quad y^2+xy=x^3+A\big( q^N \big)x+B\big( q^N\big).$$
We put $t=\exp(2\pi u).$ The points of order $N$ on $T(q^N)$ corresponding to $t=\zeta_N^iq^j\,(0\leq i,j\leq N-1)$ with
$\zeta_N=\exp(2\pi i/N),$ and their coordinates are given by
$$x(t)=\sum_{n\in\BZ} {{q^{Nn}t}\over {(1-q^{Nn}t)^2 } }-2 \sum_{n=1}^\infty
{ {nq^{Nn}}\over {1-q^{Nn}t } }$$
and
$$y(t)=\sum_{n\in\BZ} {{q^{2Nn}t^2}\over {(1-q^{Nn}t)^3 } }+ \sum_{n=1}^\infty
{ {nq^{Nn}}\over {1-q^{Nn}t } }$$

\vskip 0.5cm \noindent
{\bf 3.3. The Mordell-Weil theorem}
\vskip 0.3cm
\noindent
{\bf [The Mordell-Weil Theorem]} Let $E$ be an elliptic curve defined over a number field $K$. Then $E(K)$ is
finitely generated, that is,
$$E(K)\cong \BZ^{r_E} \oplus E(K)_{tor},$$
where $r_E$ is a nonnegative integer called the {\sf rank} of $E$ over $K$ and $E(K)_{tor}$ is the torsion subgroup of
$E(K)$.

\vskip 0.2cm\noindent
{\bf Remark.} When $K=\BQ$, Barry Mazur \cite{Ma1} showed that $E(K)_{tor}$ is isomorphic to one of the following groups\,:
$$\BZ/N\BZ\,(1\leq N\leq 10,\ N=12),\quad \BZ/2\BZ \times \BZ/2N\BZ\ (1\leq N \leq 4).$$
It is conjectured that there are elliptic curves of arbitrary large rank over $\BQ$. In 2006, Noam Elkies found an explicit elliptic $E$ over $\BQ$
curve with $r_E=28$ and described generators of $E(\BQ)$ explicitly.

\vskip 0.2cm The Shafarevich-Tate group ${\rm III}(E,K)$ is regarded as a cohomological obstruction to a calculation of $E(K)$.
It is conjectured that ${\rm III}(E,K)$ is finite. ${\rm III}(E,K)$ is defined as follows\,: We first consider the following exact sequence
\begin{equation}
0 \lrt E[n]\lrt E({\overline K})\stackrel{[n]}\lrt E({\overline K})\lrt 0.
\end{equation}
Let $G_K:=Gal({\overline K}/K)$. Then (3.23) yields an exact sequence of Galois cohomology groups
\begin{equation}
0 \lrt E(K)/[n](E(K)) \lrt H^1(G_K,E[n])\lrt H^1(G_K,E({\overline K}))[n]\lrt 0.
\end{equation}
For each place $v$ of $K$, we choose an extension $w$ of $v$ to ${\overline K}$ and denote by $G_v\subset G_K$
the corresponding decomposition subgroup $G_v\cong {\rm Gal}({\overline K}_w/K_v).$ Then we have the following commutative diagram

\begin{equation*}
\begin{array}{ccccccccc}
0 & \longrightarrow &  E(K)/[n](E(K))      & \longrightarrow & H^1(G_K,E[n])
 & \stackrel{\alpha}\longrightarrow & H^1(G_K,E({\overline K}))[n] &  \longrightarrow & 0\\
{} & {}             &\Big\downarrow              &  & {}
& & \ \ \Big\downarrow\,{\beta_n}{} & {}\\
0 & \longrightarrow & \prod_v E(K_v)/[n](E(K_v)) & \longrightarrow &
\prod_v H^1(G_v,E[n]) & \longrightarrow & \prod_v H^1(G_K,E({\overline K}_v))[n] &  \longrightarrow &
0,
\end{array}
\end{equation*}
where $\beta_n=\prod_v\beta_{n,v}$. Here $\beta_{n,v}$ denotes the composition of the restriction morphism and the morphism
induced by the inclusion $E({\overline K})\lrt  E({\overline K}_v).$ We define
the {\sf Shafarevich}\,-{\sf Tate group}
${\rm III}(E,K)$ of $E$ over $K$ to be
\begin{equation}
{\rm III}(E,K)=\bigcup_{n\in\BZ^+}{\rm III}(E,K)_n,\qquad {\rm III}(E,K)_n:={\rm Ker}(\beta_n).
\end{equation}
The group
\begin{equation}
S(E,K)_n:=\alpha^{-1}\big({\rm III}(E,K)_n\big)
\end{equation}
is called the {\sf Selmer group} of $E$. An element of $S(E,K)_n$ can be interpreted as the class of an $n$-covering $C\lrt E$ such that $C$ has a $K_v$-point in each completion $K_v$ of $K$. By definition we have an exact sequence
\begin{equation}
0 \lrt E(K)/[n](E(K)) \lrt S(E,K)_n\lrt {\rm III}(E,K)_n\lrt 0.
\end{equation}

\vskip 0.5cm \noindent
{\bf 3.4. The Birch-Swinnerton-Dyer conjecture}
\vskip 0.3cm
Let $E$ be an elliptic curve defined over a number field $K$. For a place $v$ of $K$ where $E$ has good reduction, we put
$$a({\mathfrak P}_v)=Nv+1- |E({\mathcal O}_k/{\mathfrak P}_v)|$$
and for places of $K$ with bad reductions, we set
$$a({\mathfrak P}_v)=
\begin{cases} \hfill\ \ 1 \ & \ {\rm if}\ v \ {\rm is\ split\ and \ of\ multiplicative\ reduction}\\
-1 & \ {\rm if}\ v \ {\rm is\ nonsplit\ and\ of\ multiplicative\ reduction}\\
\ \ 0 & \ {\rm if}\ E \ {\rm has\ additive\ reduction\ at}\ v.
\end{cases}$$
We define the $L$-function $L(E,s)$ of $E$ by
\begin{equation}
L(E,s)=\prod_{v:{\rm bad}}\Big( 1-a({\mathfrak P}_v)Nv^s\Big)^{-1}\cdot \prod_{v:{\rm good}}
\Big( 1-a({\mathfrak P}_v)Nv^s+Nv^{1-2s}\Big)^{-1}.
\end{equation}
Since $|a({\mathfrak P}_v)|_v\leq 2\sqrt{Nv}$, $L(E,s)$ converges absolutely for ${\rm Re}(s)>{\frac 32}.$
We assume the conjecture of the Hasse-Weil on the existence of an analytic continuation of $L(E,s)$ to the entire
complex plane. Let $r_E$ be the rank of $E$ and $a_E={\rm ord}_{s=1}L(E,s)$ the order of the zero of $L(E,s)$ at
$s=1$.

\vskip 0.5cm\noindent
{\bf [The Birch-Swinnerton-Dyer Conjecture]}
\vskip 0.2cm\noindent
(BSD1)\ $r_E=a_E.$
\vskip 0.2cm\noindent
(BSD2) Assume that the Shafarevich-Tate group ${\rm III}(E,K)$ is finite. Then
\begin{equation}
\lim_{s\rightarrow 1}{{L(E,s)}\over {(s-1)^{r_E}} }=M\,{ {|{\rm III}(E,K)|\,R_E}\over {|E(K)_{tor}|^2} }\, ,
\end{equation}
where $R_E$ is the elliptic regulator of $E$ and $M=\prod_{v\in S_E} m_v$ is an explicitly written product of local
Tamagawa factors over the set $S_E$ of all Archimedean places of $K$ and places where $E$ has bad reduction and
$m_v=\int_{E(K_v)}\omega,\ \omega$ being the N{\'e}ron differential of $E$.

\vskip 0.5cm
We consider the case $K=\BQ$. Then we have the functional equation
\begin{equation}
\Lambda(E,s):=\left( { {\sqrt{N}}\over {2\pi} }\right)^s \G(s)\, L(E,s)=\varepsilon (E)\, \Lambda(E,2-s),\quad \varepsilon (E)=\pm 1,
\end{equation}
where $\varepsilon (E)$ is the root number of $E$ and $N$ is the conductor of $E$. By the modularity of $E$
\cite{BCDT}, there is a primitive cusp form $f\in S_2^{new}(\G_0(N))$ of level $N$
\begin{equation}
L(E,s)=L(f,s).
\end{equation}
Let $\varphi:X_0(N)\lrt E$ be a modular parametrization of $E$. Then
$$\varphi^*\omega= 2\pi i f(\tau)d\tau\quad {\rm on}\ X_0(N)$$
and
$$L(E,1)= 2\pi \int_0^\infty f(iy)\,dy =m_\infty.$$

There are some evidences
supporting the BSD conjecture. We list these evidences
chronologically.
\vskip 0.2cm\noindent
{\bf Result
1\,}(Coates-Wiles\,\cite{CW},\,1977). Let $E$ be a CM curve over $\BQ$.
Suppose that $a_E$ is zero. Then $r_E$ is zero.
\vskip 0.2cm\noindent
{\bf Result
2\,}(Rubin\,\cite{R},\,1981). Let $E$ be a CM curve over $\BQ$. Assume
that $a_E$ is zero. Then the Tate-Shafarevich
group $\rm{III}(E,\BQ)$ of $E$ is finite.
\vskip 0.2cm\noindent
{\bf Result 3\,}(Gross-Zagier\,\cite{GZ},\,1986\,;\ \cite{BCDT},\,2001). Let $E$ be an
elliptic curve over $\BQ$. Assume that $a_E$ is
equal to one and $\varepsilon(E)=-1$. Then $r_E$ is equal to or bigger than one. The detail will be discussed.

\vskip 0.2cm\noindent
{\bf Result 4\,}(Gross-Zagier\,\cite{GZ},\,1986). There
exists an elliptic curve $E$ over $\BQ$ such that
$r_E=a_E=3$. For instance, the
elliptic curve ${\tilde E}$ given by
$${\tilde E}\ :\quad -139\,y^2=x^3+10\,x^2-20\,x+8$$
satisfies the above property.

\vskip 0.2cm\noindent
{\bf Result 5\,}(Kolyvagin\,\cite{Kol1, Kol2},\,
1990\,:\,Gross-Zagier\,\cite{GZ},\,1986\,:\,Bump-Friedberg-Hoffstein\,
\cite{BFH},
\,1990\,: Murty-Murty\,\cite{MM},\,1990\,:\,\cite{BCDT},\,2001). Let $E$ be
an elliptic curve over $\BQ$. Assume that $a_E$
is 1 and $\varepsilon(E)=-1$. Then $r_E$ is equal to 1.

\vskip 0.2cm\noindent
{\bf Result 6\,}(Kolyvagin\,\cite{Kol2},\,
1990\,:\,Gross-Zagier\,\cite{GZ},\,1986\,:\,Bump-Friedberg-Hoffstein\,\cite{BFH},
\,1990\,: Murty-Murty\,\cite{MM},\,1990\,:\,\cite{BCDT},\,2001). Let $E$ be
an elliptic curve over $\BQ$. Assume that $a_E$
is zero and $\varepsilon(E)=1$. Then $r_E$ is equal to
zero.
\vskip 3mm
Cassels proved the fact that if an elliptic curve over
$\BQ$ is isogeneous to another elliptic curve $E'$ over $\BQ$,
then the BSD conjecture holds for $E$ if and only if the BSD
conjecture holds for $E'$.

\vskip 0.5cm \noindent
{\bf 3.5. Heegner points and Jacobi forms}
\vskip 0.3cm
In this subsection, we describe the result of Gross-Kohnen-Zagier\,\cite{GKZ} roughly.
\vskip 0.2cm
First we begin with
giving the definition of Jacobi forms. By definition a Jacobi form
of weight $k$ and index $m$ is a holomorphic complex valued
function $\phi(z,w)\,(z\in \BH,\,z\in\BC)$ satisfying the
transformation formula
\begin{eqnarray}
\phi\left( {{az+b}\over {cz+d}},
{{w+\la z+\mu}\over{cz+d}}\right)=& e^{-2\pi i\left\{ cm(w+\la
z+\mu)^2 (cz+d)^{-1}-m(\la^2 z+2\la w)\right\}} \\ &\ \times
(cz+d)^k \,\phi(z,w) \nonumber
\end{eqnarray}
for all $\begin{pmatrix} a & b\\
c &d\end{pmatrix}\in SL(2,\BZ)$ and $(\la,\mu)\in\BZ^2$ having a
Fourier expansion of the form
\begin{equation}
\phi(z,w)=\sum_{\scriptstyle
n,r\in \BZ^2 \atop\scriptstyle r^2\leq 4mn} c(n,r)\,e^{2\pi
i(nz+rw)}.
\end{equation}
We remark that the Fourier coefficients
$c(n,r)$ depend only on the discriminant $D=r^2-4mn$ and the
residue $r\,(\text{mod}\ 2m).$ From now on, we put
$\Gamma_1\!:=SL(2,\BZ).$ We denote by $J_{k,m}(\Gamma_1)$ the space
of all Jacobi forms of weight $k$ and index $m$. It is known that
one has the following isomorphisms
\begin{equation}
[\Gamma_2,k]^M\cong
J_{k,1}(\G_1)\cong M_{k-{\frac 12}}^+(\G_0(4))\cong
[\G_1,2k-2],
\end{equation}
where $\G_2$ denotes the Siegel modular
group of degree 2, $[\G_2,k]^M$ denotes the Maass space introduced
by H. Maass\,(1911-1993)\,(cf.\,\cite{M1,M2,M3}), $M_{k-{\frac
12}}^+(\G_0(4))$ denotes the Kohnen space introduced by W.
Kohnen\,\cite{Koh} and $[\G_1,2k-2]$ denotes the space of modular forms
of weight $2k-2$ with respect to $\G_1$. We refer to \cite{Y1} and
\cite[pp.\,65--70]{Y3} for a brief detail. The above isomorphisms are
compatible with the action of the Hecke operators. Moreover,
according to the work of Skoruppa and Zagier \cite{SZ}, there is a
Hecke-equivariant correspondence between Jacobi forms of weight
$k$ and index $m$, and certain usual modular forms of weight
$2k-2$ on $\Gamma_0(N).$

\vskip 0.2cm
Now we give the definition of Heegner
points of an elliptic curve $E$ over $\BQ$. By \cite{BCDT}, $E$ is
modular and hence one has a surjective holomorphic map
$\phi_E:X_0(N)\lrt E(\BC).$ Let $K$ be an imaginary quadratic
field of discriminant $D$ such that every prime divisor $p$ of $N$
is split in $K$. Then it is easy to see that $(D,N)=1$ and $D$ is
congruent to a square $r^2$ modulo $4N$. Let $\Theta$ be the set
of all $z\in\BH$ satisfying the following conditions
$$az^2+bz+c=0,\quad a,b,c\in\BZ,\ N|a,$$
$$b\equiv
r\,\,(\text{mod}\,2N),\qquad D=b^2-4ac.$$ Then $\Theta$ is
invariant under the action of $\G_0(N)$ and $\Theta$ has only a
$h_K$ $\G_0(N)$-orbits, where $h_K$ is the class number of $K$.
Let $z_1,\cdots,z_{h_K}$ be the representatives for these
$\G_0(N)$-orbits. Then $\phi_E(z_1),\cdots,\phi_E(z_{h_K})$ are
defined over the Hilbert class field $H(K)$ of $K$, i.e., the
maximal everywhere unramified extension of $K$. We define the
Heegner point $P_{D,r}$ of $E$ by
\begin{equation}
P_{D,r}=\sum_{i=1}^{h_K}\phi_E(z_i).
\end{equation}
We observe that
$\varepsilon(E)=1$, then $P_{D,r}\in E(\BQ)$. Let $E^{(D)}$ be the
elliptic curve (twisted from $E$) given by
\begin{equation}
E^{(D)}\ :\ \ Dy^2=f(x).
\end{equation}
Then one knows that the $L$-series of $E$ over
$K$ is equal to $L(E,s)\,L(E^{(D)},s)$ and that $L(E^{(D)},s)$ is
the twist of $L(E,s)$ by the quadratic character of $K/\BQ$.

\vskip 0.3cm
\noindent
\begin{theorem}(Gross-Zagier\,\cite{GZ},\,1986\,;\,\cite{BCDT},\,2001). Let $E$ be an
elliptic curve over $\BQ$ of conductor $N$ such that $\varepsilon(E) =-1.$ Assume
that $D$ is odd. Then
\begin{equation}
L'(E,1)\,L(E^{(D)},1)=c_E\,u^{-2}\,|D|^{-{\frac 12}}\,{\hat
h}(P_{D,r}),
\end{equation}
where $c_E$ is a positive constant not
depending on $D$ and $r,\ u$ is a half of the number of units of
$K$ and ${\hat h}$ denotes the canonical N{\'e}ron-Tate height of $E$.
\end{theorem}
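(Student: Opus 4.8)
The strategy is to compute both sides of the identity
\[
L'(E,1)\,L(E^{(D)},1)=c_E\,u^{-2}\,|D|^{-1/2}\,\hat h(P_{D,r})
\]
independently and match them. The left-hand side is, by modularity \cite{BCDT}, equal to $L'(f,1)\,L(f^{(D)},1)$ where $f\in S_2^{\mathrm{new}}(\Gamma_0(N))$ corresponds to $E$, and this product is recognized as (a multiple of) $L'(f\otimes\theta_K,1)$, the derivative at the center of the Rankin–Selberg convolution of $f$ with the weight-one theta series $\theta_K$ attached to $K$. Since $\varepsilon(E)=-1$ forces the sign of the functional equation of $L(f\otimes\theta_K,s)$ to be $-1$, the central value vanishes and it is the \emph{first} derivative that carries arithmetic content. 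First I would set up the Rankin–Selberg integral representation, unfolding $\int_{\Gamma_0(N)\backslash\mathbb H} f(z)\,\overline{\theta_K(z)}\,E(z,s)\,y^{k}\,\frac{dx\,dy}{y^2}$ against a non-holomorphic Eisenstein series $E(z,s)$ of weight one, and then differentiate in $s$ at the center. Differentiating the Eisenstein series produces its Kronecker limit-formula expansion, i.e. an automorphic Green's function for the Heegner divisor on $X_0(N)$; this is the analytic input.

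The geometric side is produced by the theory of Heegner points: the point $P_{D,r}=\sum_i\phi_E(z_i)$ of $(3.21)$ is defined over the Hilbert class field $H(K)$, and its canonical Néron–Tate height $\hat h(P_{D,r})$ decomposes, via the theory of local heights and arithmetic intersection theory on a regular model of $X_0(N)$ over $\mathcal{O}_K$, into a sum of archimedean and non-archimedean local contributions. The archimedean terms are values of the Green's function just mentioned; the finite primes $\mathfrak p$ contribute intersection multiplicities of Heegner divisors, which are counted by representation numbers of quadratic forms — the Gross–Keating type local computations of how two CM points on $X_0(N)$ meet in the fibre at $\mathfrak p$. Assembling these local heights into a generating series in the relevant variable, one obtains the Fourier coefficients of a modular (in fact Jacobi, via the correspondences $(3.17)$) form of the appropriate weight.

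The proof is then completed by comparing the two generating series coefficient by coefficient: the $q$-expansion coming from the derivative of the Eisenstein/Rankin–Selberg integral and the $q$-expansion coming from the heights of Heegner points are both modular forms of the same weight, level and character, and their first Fourier coefficients agree by the explicit archimedean computation; hence they coincide, yielding $(3.22)$ with an explicit $c_E>0$ (a period of $E$) and the elementary factors $u^{-2}|D|^{-1/2}$ emerging from the class-number/unit normalizations in $(3.21)$. The constant $c_E$ is positive because it is essentially $\|f\|^2$ divided by a real period, and its independence of $D$ and $r$ follows from the fact that it appears already in the Rankin–Selberg normalization before $K$ enters.

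The main obstacle — and the technical heart of \cite{GZ} — is the matching of the \emph{non-archimedean} local height contributions with the derivative of the non-archimedean Eisenstein coefficients: one must show that the arithmetic intersection number $\langle P_{D,r}^\sigma, P_{D,r}\rangle_{\mathfrak p}$ at a prime $\mathfrak p$ equals the corresponding $\log p$-term in the holomorphic projection of the Rankin–Selberg derivative. This requires a delicate analysis of the reduction of Heegner points modulo $\mathfrak p$ (quaternionic moduli interpretations, Gross's work on supersingular points and quaternion orders) together with a \emph{holomorphic projection} argument to discard the non-holomorphic part of $\xi(z)=\partial_s E(z,s)|_{s_0}$ without losing the arithmetic content. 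Everything else — the unfolding, the Kronecker limit formula, the local archimedean height — is, by comparison, routine once this identification is in place.
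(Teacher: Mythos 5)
Your outline follows the same route as the proof the paper relies on: the paper itself gives no argument for Theorem 3.1 --- it simply cites Gross--Zagier \cite{GZ} and the modularity theorem \cite{BCDT} --- and the machinery it records in Section 4 (the height generating series $g_{\mathfrak s}(\tau)=\sum_{m\geq 1}\langle c,T_m c^\sigma\rangle q^m$, the identity $(f,g_{\mathfrak s})=\frac{u^2\sqrt{|D|}}{8\pi^2}\,L_{\mathfrak s}'(f,1)$ of Theorem 4.1, summed over ideal classes with $\chi$ trivial to obtain Theorem 4.2, and then Theorem 3.1 via $L(E/K,s)=L(E,s)\,L(E^{(D)},s)$ together with $L(E,1)=0$ forced by $\varepsilon(E)=-1$) is exactly the Rankin--Selberg/holomorphic-projection versus local-heights comparison you describe. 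So in substance your plan is the correct one.

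Two cautions about the final matching step as you wrote it. First, you cannot conclude that the two $q$-expansions ``coincide'' because ``their first Fourier coefficients agree'': a weight-two form on $\Gamma_0(N)$ is not determined by one coefficient, and in fact the two series do not coincide on the nose --- Gross and Zagier prove only that they have the same Petersson product against each newform $f$ (equivalently, that their difference lies in the span of old forms and terms killed by holomorphic projection), which is precisely the shape of Theorems 4.1--4.2 and is all that is needed to extract the formula for $c_{\chi,f}$ and hence for $P_{D,r}$. Second, the agreement of an individual coefficient is not obtained from ``the explicit archimedean computation'' alone: each $\langle c,T_m c^\sigma\rangle$ is a sum of an archimedean term (the Green's function values you mention) and non-archimedean intersection numbers, and both must be matched against the corresponding pieces of the derivative of the Rankin--Selberg kernel; your closing paragraph states this correctly, so the middle paragraph should be amended to agree with it. (A minor point: the finite-place computations in \cite{GZ} are Gross's quaternionic/supersingular counts of homomorphisms between reductions of CM elliptic curves, which predate and differ from the Gross--Keating computations you allude to.) With these corrections your sketch is the standard Gross--Zagier argument that the paper cites rather than proves.
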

\vskip 0.2cm
Since
$E$ is modular by \cite{BCDT}, there is a cusp form of weight 2 with
respect to $\G_0(N)$ such that $L(f,s)=L(E,s).$ Let $\phi(z,w)$ be
the Jacobi form of weight 2 and index $N$ which corresponds to $f$
via the Skoruppa-Zagier correspondence. Then $\phi(z,w)$ has a
Fourier series of the form (3.33). B. Gross, W. Kohnen and D.
Zagier obtained the following result.

\vskip 0.2cm\noindent
\begin{theorem}(Gross-Kohnen-Zagier,\,\cite{GKZ}\,;\,\cite{BCDT},\,2001). Let $E$ be a
modular elliptic curve with conductor $N$ and suppose that
$\varepsilon=-1,\ r=1.$ Suppose that $(D_1,D_2)=1$ and $D_i\equiv
r_i^2\,(\text{mod}\,4N)\,(i=1,2).$ Then
\begin{equation*}
L'(E,1)\,c((r_1^2-D_1)/(4N),r_1)\,c((r_2^2-D_2)/(4N),r_2)\,=\,c_E'\, \langle P_{D_1,r_1},
P_{D_2,r_2} \rangle,
\end{equation*}
where $c_E'$ is a positive constant not depending
on $D_1,\,r_1$ and $D_2,\,r_2$ and $\langle\ ,\ \rangle$ is the height pairing
induced from the N{\' e}ron-Tate height function ${\hat h}$, that
is,  ${\hat h}(P_{D,r})=\langle P_{D,r},P_{D,r} \rangle$.
\end{theorem}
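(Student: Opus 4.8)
The plan is to follow the original argument of Gross, Kohnen and Zagier \cite{GKZ}, which combines three ingredients: the modularity of a generating series built from heights of Heegner points, the Gross-Zagier formula of Theorem 3.1, and a Waldspurger-Kohnen-Zagier formula expressing the twisted values $L(E^{(D)},1)$ through Fourier coefficients of a half-integral weight form.

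First I would fix $D_1<0$ and $r_1$ with $D_1\equiv r_1^2\ (\mathrm{mod}\ 4N)$ and form the two-variable generating series
\begin{equation*}
\Phi_{D_1,r_1}(z,w):=\sum_{\substack{D<0\\ D\equiv r^2\ (4N)}}\langle P_{D_1,r_1},\,P_{D,r}\rangle\, e^{2\pi i\left(\frac{r^2-D}{4N}z+rw\right)},\qquad z\in\BH,\ w\in\BC,
\end{equation*}
where $\langle\ ,\ \rangle$ is the N\'eron-Tate height pairing. The crucial --- and by far the hardest --- step is to show that $\Phi_{D_1,r_1}$ is a Jacobi cusp form of weight $2$ and index $N$. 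This is done by establishing the modularity of the vector-valued series $\sum_{D} P_{D,r}\,q^{(r^2-D)/(4N)}\zeta^r$ with values in $J_0(N)(\BQ)\otimes\BR$ --- the main new theorem of \cite{GKZ} --- whose proof rests on the Gross-Zagier evaluation of the N\'eron-Tate heights of Heegner points via their decomposition into archimedean and non-archimedean local contributions, and on recognizing the resulting coefficients as those of a weight-$3/2$ form in Kohnen's plus space (compare Borcherds' approach reviewed in Appendix C). Pairing this vector-valued form against the fixed class $P_{D_1,r_1}$ then produces $\Phi_{D_1,r_1}$, and the coprimality hypothesis $(D_1,D_2)=1$ is what guarantees the mixed pairings behave as honest Fourier coefficients, with no correction terms.

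Next, since the formation of Heegner points commutes with the Hecke operators and $f$ is a newform, $\Phi_{D_1,r_1}$ lies in the $f$-isotypic subspace of the space of weight-$2$ index-$N$ Jacobi cusp forms. By the Skoruppa-Zagier correspondence recalled before the theorem, that subspace is one-dimensional and spanned by $\phi$, whence
\begin{equation*}
\Phi_{D_1,r_1}(z,w)=\la(D_1,r_1)\,\phi(z,w)
\end{equation*}
for a scalar $\la(D_1,r_1)$ depending only on $D_1$ and $r_1$. Comparing Fourier coefficients yields $\langle P_{D_1,r_1},P_{D,r}\rangle=\la(D_1,r_1)\,c\big((r^2-D)/(4N),\,r\big)$ for every admissible pair $(D,r)$.

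Finally I would determine $\la(D_1,r_1)$ by reading off the diagonal coefficient. The $\big((r_1^2-D_1)/(4N),\,r_1\big)$-coefficient of $\Phi_{D_1,r_1}$ is ${\hat h}(P_{D_1,r_1})=\langle P_{D_1,r_1},P_{D_1,r_1}\rangle$, which by Theorem 3.1 equals $c_E\,u^{-2}|D_1|^{1/2}L'(E,1)\,L(E^{(D_1)},1)$ up to the normalization used there, while the Waldspurger-Kohnen-Zagier formula writes $L(E^{(D_1)},1)$ as a constant independent of $D_1$ times $|D_1|^{-1/2}\,\big|c((r_1^2-D_1)/(4N),\,r_1)\big|^2$. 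Multiplying these, the powers of $|D_1|$ cancel and one gets $\la(D_1,r_1)=c_E'\,L'(E,1)\,c\big((r_1^2-D_1)/(4N),\,r_1\big)$ with $c_E'$ independent of $D_1$ and $r_1$; the finitely many $D_1$ contributing extra units of $K$, and the degenerate case where $c((r_1^2-D_1)/(4N),\,r_1)$ vanishes --- which forces $P_{D_1,r_1}$ to be torsion and both sides of the claimed identity to vanish --- are handled separately. Substituting this value of $\la(D_1,r_1)$ into the coefficient identity of the previous paragraph with $(D,r)=(D_2,r_2)$ gives exactly the asserted formula. The one serious obstacle is thus the modularity of $\Phi_{D_1,r_1}$, which is precisely where the full Gross-Zagier height machinery is needed; once it is granted, the remainder is linear algebra in finite-dimensional spaces of Jacobi forms together with the Gross-Zagier and Waldspurger-type identities already available.
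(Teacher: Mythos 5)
The paper itself gives no proof of this statement: it is quoted from Gross--Kohnen--Zagier \cite{GKZ}, so your sketch has to be measured against the original argument, and measured that way its architecture is inverted in a way that leaves a genuine gap. Your key input is the modularity of the Mordell--Weil-valued generating series $\sum_{D,r}P_{D,r}\,q^{(r^2-D)/(4N)}\zeta^{r}$ (equivalently of your $\Phi_{D_1,r_1}$), and you justify it by appealing to the Gross--Zagier local decomposition of N\'eron--Tate heights and to ``recognizing the resulting coefficients as those of a weight-$3/2$ form.'' But that recognition, for pairs of distinct coprime discriminants, is exactly the content of the theorem you are proving. In \cite{GKZ} the logical order is the reverse: one first computes $\langle P_{D_1,r_1},T_m P_{D_2,r_2}\rangle$ directly, by extending the archimedean and non-archimedean local height computations of Gross--Zagier from one discriminant to two coprime discriminants (this is where the hypothesis $(D_1,D_2)=1$ is actually used, not in any pairing formalism), identifies the resulting weight-$2$ series in $m$, and obtains the displayed identity; the statement that the points $P_{D,r}$ lie on a single line and that the point-valued series is a Jacobi form --- your first step --- is then a corollary, obtained, as the survey itself notes immediately after the theorem, by combining Theorems 3.1 and 3.2 with the equality case of the Cauchy--Schwarz inequality. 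So as written your proposal is circular: the ``hardest step'' you defer is essentially the theorem itself.

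The proposal can, however, be repaired into a genuinely different proof if the modularity of the Heegner generating series is supplied by an argument independent of heights, for instance Borcherds' theorem recalled in Appendix C \cite{Bo2}, which proves modularity of the Heegner divisor classes in the Jacobian via Borcherds products and Serre duality. Granting that input, your remaining steps are sound: pairing the vector-valued form against the fixed class gives a scalar Jacobi form for purely formal reasons (no coprimality needed there); Hecke equivariance together with the one-dimensionality of the $f$-isotypic part of $J_{2,N}(\Gamma_1)$ under the Skoruppa--Zagier correspondence gives proportionality to $\phi$; and the scalar is evaluated on the diagonal by Theorem 3.1 combined with the Kohnen--Zagier/Waldspurger formula, with the degenerate cases ($c=0$, or $L'(E,1)=0$, or $D_1\in\{-3,-4\}$) handled as you indicate. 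You should therefore either invoke a heights-free modularity theorem explicitly at the first step, or accept that your route collapses back into the two-discriminant height computation of \cite{GKZ} that you were trying to avoid.
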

\vskip 0.2cm
We see from the
above theorem that the value of $\langle P_{D_1,r_1}, P_{D_2,r_2} \rangle$ of
two distinct Heegner points is related to the product of the
Fourier coefficients
$c((r_1^2-D_1)/(4N),r_1)$ \\
$c((r_2^2-D_2)/(4N),r_2)$ of the Jacobi
forms of weight 2 and index $N$ corresponded to the eigenform $f$
of weight 2 associated to an elliptic curve $E$. We refer to \cite{Y4}
and \cite{Z1} for more details.

\vskip 0.2cm\noindent
{\bf Corollary.} There is a point $P_0\in E(\BQ)\otimes_{\BZ}\BR$
such that
$$P_{D,r}=c((r^2-D)/(4N),r)P_0$$
for all $D$ and
$r\,(D\equiv r^2\,(\text{mod}\,4N))$ with $(D,2N)=1.$

\vskip 0.2cm
The corollary is obtained by combining Theorem 3.1 and Theorem 3.2 with
the Cauchy-Schwarz inequality in the case of equality.

\vskip 0.2cm\noindent {\bf Remark 4.} R. Borcherds \,\cite{Bo2} generalized the
Gross-Kohnen-Zagier theorem to certain more general quotients of
Hermitian symmetric spaces of higher dimension, namely to quotients
of the space associated to an orthogonal group of signature $(2,b)$
by the unit group of a lattice. Indeed he relates the Heegner
divisors on the given quotient space to the Fourier coefficients
of vector-valued holomorphic modular forms of weight $1+{\frac
b2}$.

\end{section}

\vskip 10mm
%%%%%%%%%%%%%%%%%%%%%%%%%%%%%%%%%%%%%%%%%%%%%%%%%%%%%%%%%%%%%%%%%%%%%%%%%%%%%%%%%%%%%%%%%%%%%%%%%%%%%%%%%%%%%%%%%%%%%%%%%%%%%%%%%%%%%%%%%%%%%%%%%%%%%%%%
%%%%%%%%%%%%%%%%%%%%%%%%%%%%%%%%%%%%%%%%%%%%%%%%%%%%%%%%%%%%%%%%%%%%%%%%%%%%%%%%%%%%%%%%%%%%%%%%%%%%%%%%%%%%%%%%%%%%%%%%%%%%%%%%%%%%%%%%%%%%%%%%%%%%%%%%
%%%%%%%%
%%%%%%%%
%%%%%%%%      4. The Gross-Zagier Formula
%%%%%%%%
%%%%%%%%
%%%%%%%%%%%%%%%%%%%%%%%%%%%%%%%%%%%%%%%%%%%%%%%%%%%%%%%%%%%%%%%%%%%%%%%%%%%%%%%%%%%%%%%%%%%%%%%%%%%%%%%%%%%%%%%%%%%%%%%%%%%%%%%%%%%%%%%%%%%%%%%%%%%%%%%%
%%%%%%%%%%%%%%%%%%%%%%%%%%%%%%%%%%%%%%%%%%%%%%%%%%%%%%%%%%%%%%%%%%%%%%%%%%%%%%%%%%%%%%%%%%%%%%%%%%%%%%%%%%%%%%%%%%%%%%%%%%%%%%%%%%%%%%%%%%%%%%%%%%%%%%%%
%%%%%%%%%%%%%%%%%%%%%%%%%%%%%%%%%%%%%%%%%%%%%%%%%%%%%%%%%%%%%%%%%%%%%%%%%%%%%%%%%%%%%%%%%%%%%%%%%%%%%%%%%%%%%%%%%%%%%%%%%%%%%%%%%%%%%%%%%%%%%%%%%%%%%%%%
%%%%%%%%%%%%%%%%%%%%%%%%%%%%%%%%%%%%%%%%%%%%%%%%%%%%%%%%%%%%%%%%%%%%%%%%%%%%%%%%%%%%%%%%%%%%%%%%%%%%%%%%%%%%%%%%%%%%%%%%%%%%%%%%%%%%%%%%%%%%%%%%%%%%%%%%
\begin{section}{{\bf The Gross-Zagier formula}}
\setcounter{equation}{0}
\vskip 3mm
In this section, we review the works of B. Gross and D. Zagier \cite{GZ} who solved the conjecture proposed by B. Birch early in the 1980s. These works had some applications to the study of the BSD conjecture and the class number problem of imaginary quadratic fields. They obtained the famous {\sf Gross-Zagier formula} relating the N{\'e}ron-Tate heights of Heegner points on the modular curve  to the central derivatives of some Rankin-Selberg $L$-functions under mild Heegner condition.

\vskip 0.3cm
Let $D$ be congruent to a square (mod $4N$). Assume that $D$ is odd and congruent to $1$ (mod 4). let $x=(E\lrt E')$
be a Heegner point of discriminant $D$ on $X_0(N)$. That is, the elliptic curves $E$ and $E'$ have complex multiplication by
${\mathcal O}={\mathcal O}_D$, where ${\mathcal O}$ is the ring of integers of the imaginary quadratic field
$K=\BQ(\sqrt{D})$ with $(D,N)=1.$ Let $h=h_K$ be the class number of $K$. Let $u=|{\mathcal O}^{\times}/\{\pm 1\}|$. Then
$$u=\begin{cases} 1\ &{\rm if}\ D\neq -3,-4\\
                  2\ &{\rm if}\ D=-4 \\
                  3\ &{\rm if}\ D=-3.
                  \end{cases}$$
Let $s$ be the number of distinct prime factors of $N$. It is known that there are $2^sh$ Heegner points on $X_0(N)$
that are all rational over the Hilbert class field $H=K(j(E))$ of $K$. For each place $v$ of $H$, we define
$|\ \cdot\ |_v:H_v^{\times}\lrt \BR_{\geq 0}$ by
$$|\alpha|_v=\begin{cases} \alpha {\overline \alpha}=|\alpha|^2\ \ & {\rm if}\ H_v\cong \BC\\
\ \ q_v^{-v(\alpha)} \ \ & {\rm if}\ H_v \ {\rm is\ nonarchimedean\ with\ prime}\ \pi\ {\rm satisfying} \\
      & \qquad v(\pi)=1\ {\rm and\ finite\ residue\ field\ of\ order}\ q_v
      \end{cases}$$
Let $J=J(X_0(N))$ be the Jacobian of $X_0(N)$. Then it is known that there is the canonical height pairing
\begin{equation}
\langle \ ,\ \rangle : J\times J \lrt \BR
\end{equation}
over $H$. The quadratic form
\begin{equation}
{\hat h}:J\lrt \BR\cup \{ 0\},\qquad {\hat h}(a)=\langle a,a\rangle,\quad a\in J
\end{equation}
associated to the height pairing $\langle \ ,\ \rangle$ is the canonical N{\'e}ron-Tate height associated to the class
of the divisor $(2\Theta)$, where $\Theta$ is a symmetric theta divisor in $J$. Since this divisor $\Theta$ is ample,
${\hat h}$ defines a positive definite quadratic form on $J(H)\otimes \BR$. This form can be extended to a
Hermitian form on $J(H)\otimes \BC$.

\vskip 0.2cm The Petersson inner product on $S_k(\Gamma_0(N))$ is defined by
\begin{equation}
(f_1,f_2):=\int_{\Gamma_0(N)\backslash \BH}f_1(\tau)\,{\overline{f_2(\tau)}}\, y^{k+2}\, dxdy,\quad \tau=x+i\,y,\
f_1,f_2\in S_k(\Gamma_0(N)).
\end{equation}
Let $f\in S_2^{new}(\G_0(N))$ whose Fourier expansion is $f(\tau)=\sum_{n\geq 1}a(n)q^n.$ Let $\sigma$ be a fixed element
in ${\rm Gal}(H/K)$ which is canonically isomorphic to the class group ${\rm Cl}_K$ of $K$ by the Artin map of global
class field theory. Let ${\mathfrak s}$ be the class in ${\rm Cl}_K$ corresponding to $\sigma$. We define
\begin{equation}
\theta_{\mathfrak s}(\tau)={\frac 1{2u}}+ \sum_{{\mathfrak a}\in {\mathfrak s}\atop {\mathfrak a}\ {\rm integral}}e^{2\pi i\,N{\mathfrak a}\,\tau}
=\sum_{n\geq 0}r_{\mathfrak s}(n)\,q^n,
\end{equation}
where $r_{\mathfrak s}(0)={\frac 1{2u}}$ and for $n\geq 1$,
\begin{equation*}
r_{\mathfrak s}(n)=|\{ {\mathfrak a}\in {\mathfrak s}\,|\ {\mathfrak a}\ {\rm integral},\ N{\mathfrak a}=n\,\}|.
\end{equation*}
Then $\theta_{\mathfrak s}(\tau)$ defines a modular form of weight $1$ on $\G_1(D)$ with a character
$\epsilon : \left( \BZ/D\BZ\right)^{\times}\rightarrow \{\pm 1\}$ associated with the quadratic extension $K/\BQ$.
To a new form $f\in S_2^{new}(\G_0(N))$ and the ideal class ${\mathfrak s}$ in ${\rm Cl}_K$ we associate the $L$-series
$L_{\mathfrak s}(f,s)$ defined by
\begin{equation}
L_{\mathfrak s}(f,s):=\sum_{n\geq 1\atop (n,DN)=1} {\frac {\epsilon(n)}{n^{2s-1}} }\cdot
\sum_{n\geq 1} {\frac {a(n)r_{\mathfrak s}(n)}{n^s} }
\end{equation}

\noindent
\begin{definition} Let $f\in S_2^{new}(\G_0(N))$ be a normalized eigenform of the Hecke algebra ${\mathbb T}$ and
$\chi: {\rm Cl}_K\rightarrow \BC^*$. We define the $L$-function
\begin{equation}
L(f,\chi,s):=
\sum_{{\mathfrak s}\in {\rm Cl}_K} \chi({\mathfrak s})\,L_{\mathfrak s}(f,s).
\end{equation}
\end{definition}

Both $L_{\mathfrak s}(f,s)$ and $L(f,\chi,s)$ converge absolutely in ${\rm Re}(s)> {\frac 32}.$ It is shown in \cite{GZ} that
they both have an analytic continuation to entire functions to the whole complex plane, satisfy the functional equations when $s$ is
replaced by $2-s$ and vanish at $s=1$.

\vskip 0.3cm We assume that $D$ is square-free and $(D,N)=1.$
\vskip 0.2cm \noindent
\begin{theorem}
Let $x=(E\lrt E')$ be a Heegner point of discriminant $D$. We put $c=(x)-(\infty)\in J(H).$ Let ${\rm Gal}(H/K)$ be an element corresponding
to ${\mathfrak s}\in {\rm Cl}_K$. Let $\langle\ ,\ \rangle$ be the canonical height pairing on $J(H)\otimes \BC$. Then the series
\begin{equation}
g_{\mathfrak s}(\tau)=\sum_{m\geq 1} \langle c,T_mc^\sigma \rangle\,q^m
\end{equation}
is an element of $S_2(\G_0(N))$ and satisfies the following property
\begin{equation}
(f,g_{\mathfrak s})={\frac {u^2 \sqrt{|D|}}{8\pi^2}}\,L_{\mathfrak s}'(f,1)
\end{equation}
for all $f\in S_2^{new}(\G_0(N))$.
\end{theorem}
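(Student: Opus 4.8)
The plan is to prove the two assertions of the theorem — that $g_{\mathfrak s}$ is a weight-two cusp form and that its Petersson pairing against newforms computes $L_{\mathfrak s}'(f,1)$ — by an analytic computation and a geometric computation that are ultimately matched coefficient by coefficient. I would organize things so that the modularity of $g_{\mathfrak s}$ emerges as a byproduct: the Hecke operators $T_m$ act on $J(H)$ and commute with $\mathrm{Gal}(H/K)$, the canonical height pairing is bilinear and suitably Galois-equivariant, and once the $m$-th coefficient $\langle c, T_m c^\sigma\rangle$ is given an explicit closed form (done below) with polynomial growth, the series $\sum_{m\ge1}\langle c,T_mc^\sigma\rangle q^m$ is visibly a cusp form of weight $2$ on $\Gamma_0(N)$.

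For the analytic side, I would realize $L_{\mathfrak s}(f,s)$ by the Rankin--Selberg method: up to an explicit constant it is the Petersson pairing of $f$ against the product of the weight-one theta series $\theta_{\mathfrak s}$ of (4.4) with a real-analytic Eisenstein series $E(\tau,s)$ of weight one for $\Gamma_0(N)$ twisted by $\epsilon$, the unfolding of $E(\tau,s)$ recovering exactly the Dirichlet series (4.5). The functional equation $s\mapsto 2-s$ has sign $-1$ under the hypotheses $\varepsilon(E)=-1$, so $L_{\mathfrak s}(f,1)=0$ and the object of interest is $L_{\mathfrak s}'(f,1)$. Differentiating the kernel $\theta_{\mathfrak s}(\tau)\,E(\tau,s)$ in $s$ at the centre $s=1$ gives a non-holomorphic modular form of weight $2$; applying Sturm's holomorphic projection operator produces a genuine cusp form $\phi_{\mathfrak s}\in S_2(\G_0(N))$ with $(f,\phi_{\mathfrak s})=\frac{u^2\sqrt{|D|}}{8\pi^2}\,L_{\mathfrak s}'(f,1)$ for every newform $f$. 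The real work here is to write out the Fourier coefficients of $\phi_{\mathfrak s}$: one computes the holomorphic projection of the Fourier expansion of $\partial_s\big(\theta_{\mathfrak s}(\tau)E(\tau,s)\big)\big|_{s=1}$ term by term, the coefficients coming out in terms of representation numbers of the relevant quadratic forms together with logarithmic and Legendre-function contributions from the archimedean place.

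For the geometric side, write $c=(x)-(\infty)$ and decompose the global canonical height via Néron's local symbols: $\langle c,T_mc^\sigma\rangle=\sum_v\langle c,T_mc^\sigma\rangle_v$, a sum over the places $v$ of $H$. At the archimedean places the local term is the value at the pair of CM points $(z_i,z_j^\sigma)$ (coming from the Heegner data) of the automorphic Green's function for the hyperbolic Laplacian on $X_0(N)(\BC)$, which is built from the Legendre function $Q_0$; evaluating it at CM points reproduces exactly the archimedean part of the coefficients of $\phi_{\mathfrak s}$. At a finite place $v$ above $p$, I would spread $X_0(N)$ to a regular arithmetic surface — the Deligne--Rapoport model away from primes dividing $N$, with a separate delicate treatment at the bad primes — and read $\langle c,T_mc^\sigma\rangle_v$ as an arithmetic intersection number of the sections $x$ and $T_mx^\sigma$. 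By the moduli interpretation this counts the degree-$m$ isogenies between the mod-$p$ reductions of the CM elliptic curves in play, with multiplicities governed by Gross's theory of quasicanonical liftings and the deformation theory of $p$-divisible groups; these counts are representation numbers of ideals in an order of the definite quaternion algebra ramified at $p$ and $\infty$, and summing over $m$ assembles a modular $q$-series.

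Finally I would match the two computations. The $q$-series $\sum_m\big(\sum_v\langle c,T_mc^\sigma\rangle_v\big)q^m$ produced geometrically and the cusp form $\phi_{\mathfrak s}$ produced analytically have, coefficient by coefficient, the same archimedean part and the same non-archimedean part — the latter being the point where the quaternionic representation numbers on the geometric side meet the representation numbers appearing in the holomorphic projection on the analytic side. Hence $g_{\mathfrak s}$ and $\phi_{\mathfrak s}$ differ by a form orthogonal to all newforms, which does not affect pairing with $f$, so $(f,g_{\mathfrak s})=\frac{u^2\sqrt{|D|}}{8\pi^2}\,L_{\mathfrak s}'(f,1)$, and in particular $g_{\mathfrak s}\in S_2(\G_0(N))$. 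The principal obstacle is unquestionably the non-archimedean local height: controlling arithmetic intersection multiplicities of CM sections on a possibly singular model of $X_0(N)$ requires the full deformation-theoretic machinery and Gross's quasicanonical lifting calculus, and primes dividing $ND$ each demand a separate careful analysis. A secondary difficulty is making the holomorphic projection rigorous, since the weight-two kernel is not rapidly decreasing, so one must justify Sturm's operator and carefully track the non-cuspidal and constant-term contributions.
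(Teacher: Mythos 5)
The paper gives no proof of this theorem at all---it simply refers the reader to Gross--Zagier \cite{GZ}---and your sketch is precisely the strategy of that original proof: realize $L_{\mathfrak s}'(f,1)$ as the Petersson pairing of $f$ against the holomorphic projection of the $s$-derivative of a Rankin--Selberg kernel built from $\theta_{\mathfrak s}$ and a weight-one Eisenstein series, compute $\langle c,T_m c^{\sigma}\rangle$ as a sum of N\'eron local heights (archimedean terms via the automorphic Green's function at CM points, non-archimedean terms via intersection numbers on a regular model, quasicanonical liftings and quaternionic representation numbers), and match the two coefficient by coefficient. The one caveat is that in \cite{GZ} the coefficient comparison is carried out for $m$ prime to $N$, so the membership $g_{\mathfrak s}\in S_2(\Gamma_0(N))$ and the pairing identity rest on the supplementary argument given there rather than being, as you put it, ``visibly'' immediate from a closed formula with polynomial growth.
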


\vskip 0.2cm \noindent
\begin{theorem}
Let $f\in S_2^{new}(\G_0(N))$ be a normalized eigenform of the Hecke algebra ${\mathbb T}$ and
$\chi: {\rm Cl}_K\rightarrow \BC^*$. Then
\begin{equation}
L'(f,\chi,1)={\frac {8\,\pi^2 (f,f)}{h\,u^2\sqrt{|D|}} }\,{\hat h}(c_{\chi,f}),
\end{equation}
where
$$c_{\chi}=\sum_{\s\in {\rm Gal}(H/K)}\chi^{-1}(\s) c^\s \in {\rm the}\ \chi\!-\!{\rm eigenspace\ of}\ J(H)\otimes\BC$$
and $c_{\chi,f}$ is the projection of $c_\chi$ to the $f$-isotypical component of $J(H)\otimes \BC.$
\end{theorem}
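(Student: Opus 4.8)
\smallskip
\noindent\emph{Proof plan.} The idea is to deduce Theorem 4.2 from Theorem 4.1 by averaging over ${\rm Cl}_K$ against $\chi$ and then, using the Hecke-equivariance of the Heegner-point construction together with multiplicity one for the newform $f$, isolating a single term $\hat h(c_{\chi,f})$.

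\smallskip
First I would differentiate the defining identity $L(f,\chi,s)=\sum_{{\mathfrak s}\in {\rm Cl}_K}\chi({\mathfrak s})\,L_{\mathfrak s}(f,s)$ termwise at $s=1$ (legitimate since each $L_{\mathfrak s}(f,s)$ is entire), getting $L'(f,\chi,1)=\sum_{{\mathfrak s}}\chi({\mathfrak s})\,L_{\mathfrak s}'(f,1)$. Theorem 4.1 rewrites $L_{\mathfrak s}'(f,1)=\frac{8\pi^2}{u^2\sqrt{|D|}}\,(f,g_{\mathfrak s})$ with $g_{\mathfrak s}\in S_2(\G_0(N))$; pulling the scalars $\chi({\mathfrak s})$ into the (conjugate-linear) second slot of the Petersson product and using $\overline{\chi({\mathfrak s})}=\chi^{-1}({\mathfrak s})$, this collapses to
\begin{equation*}
L'(f,\chi,1)=\frac{8\pi^2}{u^2\sqrt{|D|}}\,\bigl(f,\,G_\chi\bigr),\qquad G_\chi:=\sum_{{\mathfrak s}\in {\rm Cl}_K}\chi^{-1}({\mathfrak s})\,g_{\mathfrak s}\ \in\ S_2(\G_0(N)).
\end{equation*}

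\smallskip
Next I would identify $G_\chi$ up to Petersson-orthogonality. Since the $m$-th Fourier coefficient of $g_{\mathfrak s}$ is $\langle c,T_m c^\sigma\rangle$ (with $\sigma\in {\rm Gal}(H/K)$ attached to ${\mathfrak s}$), the $m$-th coefficient of $G_\chi$ equals $\langle c,T_m\,c_\chi\rangle$ up to replacing $c_\chi=\sum_\sigma\chi^{-1}(\sigma)c^\sigma$ by its complex conjugate $\sum_\sigma\chi(\sigma)c^\sigma$, according to the convention used to extend $\langle\,,\,\rangle$ to a Hermitian form; since $\hat h$ is a real quadratic form and the $c^\sigma$ are rational over $H$, this ambiguity does not affect the final identity, and I will simply write $c_\chi$. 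Now ${\rm Gal}(H/K)$ acts by isometries for $\langle\,,\,\rangle$, so the distinct character eigenspaces of $J(H)\otimes\BC$ are mutually orthogonal; since $T_m$ is defined over $\BQ$ it commutes with this action and preserves each eigenspace, and using the inversion formula $c=\frac1h\sum_\psi c_\psi$ one gets $\langle c,T_m c_\chi\rangle=\frac1h\langle c_\chi,T_m c_\chi\rangle$. Decomposing $J(H)\otimes\BC$ Hecke-equivariantly and orthogonally into its $f$-isotypical component and the complement (Eichler--Shimura plus multiplicity one for the newform $f$ of exact level $N$), writing $c_\chi=c_{\chi,f}+c_\chi^{\perp}$ accordingly, and using the self-adjointness of the Hecke operators together with $T_m c_{\chi,f}=a(m)\,c_{\chi,f}$ and $a(m)\in\BR$, one obtains
\begin{equation*}
\langle c_\chi,T_m c_\chi\rangle=a(m)\,\hat h(c_{\chi,f})+\langle c_\chi^{\perp},T_m c_\chi^{\perp}\rangle ,
\end{equation*}
so that $G_\chi=\frac1h\,\hat h(c_{\chi,f})\,f+g^{\perp}$, where $g^{\perp}\in S_2(\G_0(N))$ is assembled from eigensystems other than that of $f$ and hence $(f,g^{\perp})=0$. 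Therefore $(f,G_\chi)=\frac{(f,f)}{h}\,\hat h(c_{\chi,f})$, and substituting into the previous display yields $L'(f,\chi,1)=\frac{8\pi^2(f,f)}{h\,u^2\sqrt{|D|}}\,\hat h(c_{\chi,f})$, as asserted.

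\smallskip
The genuinely deep input is Theorem 4.1 itself --- the Gross--Zagier evaluation of the archimedean and nonarchimedean local heights of Heegner divisors and their identification with the first derivative of the Rankin--Selberg series --- which we are allowed to take as given. Within the present reduction the step demanding the most care is the middle one: establishing the orthogonality of the character eigenspaces of $J(H)\otimes\BC$, the self-adjointness of all the operators $T_m$ involved (including the $U_p$ for $p\mid N$, where one must invoke compatibility with the Atkin--Lehner involutions), and the compatibility of the Galois/Hecke module structure of the Heegner classes $c^\sigma$ with the Eichler--Shimura decomposition of $S_2(\G_0(N))$ --- and then keeping the character and conjugation bookkeeping consistent so that precisely one factor $\hat h(c_{\chi,f})$ survives the averaging.
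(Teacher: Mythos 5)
The paper gives no argument of its own for Theorem 4.2 (it only points to \cite{GZ}), and your reduction --- averaging Theorem 4.1 over the characters of ${\rm Cl}_K$, identifying the $m$-th coefficient of $G_\chi$ with $\tfrac1h\langle c_\chi,T_m c_\chi\rangle$ via the Galois-isometry/eigenspace-orthogonality argument, and then isolating the $f$-isotypic term by Hecke self-adjointness and multiplicity one for the newform $f$ --- is exactly the deduction of this statement from the per-ideal-class height identity carried out in \cite{GZ}, so it is essentially the intended proof. The two delicate points you flag are handled correctly: the $\chi\leftrightarrow\bar\chi$ bookkeeping is harmless because complex conjugation commutes with the Hecke and Galois actions and $\hat h(c_{\bar\chi,f})=\hat h(c_{\chi,f})$, and the self-adjointness issue at primes $p\mid N$ can be sidestepped altogether by defining the $f$-isotypic component using only the prime-to-$N$ Hecke operators (which are Rosati-fixed, hence self-adjoint for the height pairing), since $f$ is new of exact level $N$ and strong multiplicity one then forces the full algebra, including the $U_p$, to act on that component through the eigensystem of $f$.
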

The proofs of the above two theorems can be found in \cite{GZ}.

\vskip 0.2cm Using Theorem 4.2 and a theorem of Waldspurger, Gross and Zagier \cite{GZ} obtained the following theorem:

\vskip 0.2cm \noindent
\begin{theorem}
Let $E$ be an elliptic curve over $\BQ$. Then there is a rational point $P$ in $E(\BQ)$ such that
$$L'(E,1)=\alpha\,\Omega\,\langle P,P\rangle,\quad \alpha\in \BQ^*,$$
where $\Omega$ is the real period of a regular differential on $E$ over $\BQ$. In particular,
\vskip 0.1cm (1) If $L'(E,1)\neq 0,$ there is an element of infinite order in $E(\BQ)$.
\vskip 0.2cm (2) If $L'(E,1)\neq 0$ and $r_E=1$, then
$$L'(E,1)=\beta\,\Omega\,R,\quad \beta\in\BQ^*,$$
where $R$ is the elliptic regulator of $E$.
\end{theorem}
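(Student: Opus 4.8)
The plan is to derive Theorem 4.3 from the Gross-Zagier formula of Theorem 3.1 (equivalently Theorem 4.2) by choosing a suitable auxiliary imaginary quadratic field via a Waldspurger nonvanishing argument. Assume $\varepsilon(E)=-1$, so that $L(E,1)=0$ and $L'(E,1)$ is the leading Taylor coefficient of $L(E,s)$ at $s=1$; the case $\varepsilon(E)=+1$ either forces $L'(E,1)=0$, when one takes $P$ to be a torsion point, or lies in the rank-zero range of the Birch-Swinnerton-Dyer conjecture and is treated separately. Fix the newform $f\in S_2^{\mathrm{new}}(\G_0(N))$ with $L(E,s)=L(f,s)$ afforded by modularity, together with a modular parametrization $\varphi_E\colon X_0(N)\to E$, whose degree $\deg\varphi_E$ is a nonzero integer and whose Manin constant is a nonzero rational.

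The first step is to produce the discriminant. Using Waldspurger's theorem --- which expresses the twisted central values $L(E^{(D)},1)$, up to explicit nonzero archimedean factors, through squares of Fourier coefficients of a nonzero weight-$3/2$ form lying in the Shimura preimage of $f$ --- together with the nonvanishing results of Bump-Friedberg-Hoffstein and Murty-Murty, I would select a negative fundamental discriminant $D$ with $(D,N)=1$, $D$ odd, $D$ a square modulo $4N$ (the Heegner condition), and $L(E^{(D)},1)\neq 0$: the Heegner congruences cut out finitely many residue classes modulo $4N$, and within them a positive proportion of $D$ have nonvanishing twisted central value. For such $D$ one has $\varepsilon(E^{(D)})=-\varepsilon(E)=+1$, compatibly with $L(E^{(D)},1)\neq 0$; moreover the rank-zero case of BSD for $E^{(D)}$ is known (Result 6 of Section 3), so $E^{(D)}(\BQ)$ is finite and hence $E(K)\otimes\BR=E(\BQ)\otimes\BR$ for $K=\BQ(\sqrt{D})$.

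The second step is to apply Theorem 3.1. Let $x$ be a Heegner point of discriminant $D$ on $X_0(N)$ and $c=(x)-(\infty)\in J(H)$. Theorem 3.1 gives
\[
L'(E,1)\,L(E^{(D)},1)=c_E\,u^{-2}\,|D|^{-1/2}\,\hat h\big(P_{D,r}\big),\qquad P_{D,r}=\mathrm{Tr}_{H/K}\,\varphi_E(x)\in E(K).
\]
Since $L(E^{(D)},1)\neq 0$ and $E^{(D)}(\BQ)$ is finite, $P_{D,r}$ becomes after multiplication by a nonzero integer an honest rational point $P\in E(\BQ)$, and $\hat h(P_{D,r})\in\BQ^*\cdot\langle P,P\rangle$, where $\langle\,\cdot\,,\cdot\,\rangle$ is the N{\'e}ron-Tate height on $E$ and the pullback of $\hat h$ along $\varphi_E$ differs from it by the integer $\deg\varphi_E$. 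It then remains to see that $c_E\,|D|^{-1/2}/L(E^{(D)},1)$ is a nonzero rational multiple of the real period $\Omega$. This is a period computation: via Theorem 4.2 the constant $c_E$ is expressed through the Petersson norm $(f,f)$; one has the classical identification of $(f,f)$ with a rational multiple of $\Omega\,|\Omega^{-}|$ up to a power of $2\pi$, where $\Omega^{-}$ is the imaginary N{\'e}ron period of $E$; the central value $L(E^{(D)},1)$ is a rational multiple of the real period $\Omega_{E^{(D)}}$ of the twist (Shimura's rationality theorem, compatibly with Waldspurger); and, as $D<0$, comparing N{\'e}ron differentials of $E$ and $E^{(D)}$ gives $\Omega_{E^{(D)}}\in\BQ^*\cdot|D|^{-1/2}\,|\Omega^{-}|$. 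Assembling these, the powers of $2\pi$ cancel those in the constant of Theorem 4.2 and the two occurrences of $|\Omega^{-}|$ cancel each other, leaving $L'(E,1)=\alpha\,\Omega\,\langle P,P\rangle$ with $\alpha\in\BQ^*$.

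The two consequences are now formal. The N{\'e}ron-Tate height is positive definite on $E(\BQ)\otimes\BR$ and vanishes exactly on the torsion subgroup, so $L'(E,1)\neq 0$ forces $\langle P,P\rangle\neq 0$, whence $P$ has infinite order; this is (1). If moreover $r_E=1$, then $E(\BQ)\otimes\BR$ is one-dimensional and spanned by a generator $P_0$ of its free part, so $P=mP_0$ modulo torsion for some $m\in\BZ\setminus\{0\}$; then $\langle P,P\rangle=m^2\langle P_0,P_0\rangle=m^2R$, so $L'(E,1)=\beta\,\Omega\,R$ with $\beta=\alpha m^2\in\BQ^*$, giving (2). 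I expect the principal obstacle to be the Waldspurger-type nonvanishing input --- securing a twist $D$ that simultaneously satisfies the Heegner congruences and has $L(E^{(D)},1)\neq 0$ --- together with the period bookkeeping identifying $c_E\,|D|^{-1/2}/L(E^{(D)},1)$ with a rational multiple of $\Omega$; the Gross-Zagier height computation itself (Theorems 3.1 and 4.2) is taken as input.
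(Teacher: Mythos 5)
Your proposal follows essentially the route the paper itself indicates (and that Gross--Zagier use): the height formula of Theorem 3.1/Theorem 4.2 combined with a Waldspurger-type nonvanishing of a quadratic twist satisfying the Heegner hypothesis, followed by period bookkeeping to identify the constant with a rational multiple of $\Omega$, so it matches the paper's approach. Two minor remarks: $\varepsilon(E)=+1$ does not by itself force $L'(E,1)=0$ (only $L(E,1)=0$ does), so the theorem should be read, as in Gross--Zagier, with the implicit hypothesis $\varepsilon(E)=-1$ --- a looseness already present in the paper's statement --- and your descent of $P_{D,r}$ to $E(\BQ)$ via Kolyvagin's rank-zero result, while logically sound, can be replaced by the simpler classical observation that complex conjugation fixes the trace Heegner point up to torsion when $\varepsilon(E)=-1$.
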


\end{section}

\begin{remark}
The Gross-Zagier formula may be regarded as a generalization of the class number formula and the Kronecker
limit formula. The school of Zhang \cite{YZZ, Zh1, Zh2} generalized the Gross-Zagier formula on modular curves to the formula on quaternionic Shimura curves over totally real fields.
\end{remark}

\vskip 10mm
%%%%%%%%%%%%%%%%%%%%%%%%%%%%%%%%%%%%%%%%%%%%%%%%%%%%%%%%%%%%%%%%%%%%%%%%%%%%%%%%%%%%%%%%%%%%%%%%%%%%%%%%%%%%%%%%%%%%%%%%%%%%%%%%%%%%%%%%%%%%%%%%%%%%%%%%
%%%%%%%%%%%%%%%%%%%%%%%%%%%%%%%%%%%%%%%%%%%%%%%%%%%%%%%%%%%%%%%%%%%%%%%%%%%%%%%%%%%%%%%%%%%%%%%%%%%%%%%%%%%%%%%%%%%%%%%%%%%%%%%%%%%%%%%%%%%%%%%%%%%%%%%%
%%%%%%%%
%%%%%%%%
%%%%%%%%      5. Borcherds product
%%%%%%%%
%%%%%%%%
%%%%%%%%%%%%%%%%%%%%%%%%%%%%%%%%%%%%%%%%%%%%%%%%%%%%%%%%%%%%%%%%%%%%%%%%%%%%%%%%%%%%%%%%%%%%%%%%%%%%%%%%%%%%%%%%%%%%%%%%%%%%%%%%%%%%%%%%%%%%%%%%%%%%%%%%
%%%%%%%%%%%%%%%%%%%%%%%%%%%%%%%%%%%%%%%%%%%%%%%%%%%%%%%%%%%%%%%%%%%%%%%%%%%%%%%%%%%%%%%%%%%%%%%%%%%%%%%%%%%%%%%%%%%%%%%%%%%%%%%%%%%%%%%%%%%%%%%%%%%%%%%%
%%%%%%%%%%%%%%%%%%%%%%%%%%%%%%%%%%%%%%%%%%%%%%%%%%%%%%%%%%%%%%%%%%%%%%%%%%%%%%%%%%%%%%%%%%%%%%%%%%%%%%%%%%%%%%%%%%%%%%%%%%%%%%%%%%%%%%%%%%%%%%%%%%%%%%%%
%%%%%%%%%%%%%%%%%%%%%%%%%%%%%%%%%%%%%%%%%%%%%%%%%%%%%%%%%%%%%%%%%%%%%%%%%%%%%%%%%%%%%%%%%%%%%%%%%%%%%%%%%%%%%%%%%%%%%%%%%%%%%%%%%%%%%%%%%%%%%%%%%%%%%%%%
\begin{section}{{\bf Borcherds product}}
\setcounter{equation}{0}
\vskip 3mm
In this section, we review Borcherds products. We follow the notations in \cite{Br}.
\vskip 2mm
Let $(V,q)$ be a quadratic space of signature $(2,\ell)$ and $L\subset V$ an even lattice. We extend the bilinear form $(\cdot ,\cdot )$ attached to the quadratic form $q$ to a $\BC$-linear form on the complexification $V_\BC$ of $V$. Let $Gr (L)$ be the Grassmannian of $L$, that is, the real analytic manifold of two-dimensional positive definite subspaces of $V$. The subset
\begin{equation}
  {\mathscr K}:=\left\{ [z]\in \mathbb P (V_\BC)\,|\ (z,z)=0,\ \, (z,{\bar z})> 0\ \right\}
\end{equation}
has two connected components. We choose one of them, denoted by ${\mathscr K}^+$. We see easily that the identity component $O^+(V)$ of $O(V)$ acts on ${\mathscr K}^+$ transitively. Thus
${\mathscr K}^+\cong O^+(V) /H$ is a Hermitian symmetric space of dimension $\ell$, where $H$ is the stabilizer of a fixed point in ${\mathscr K}^+$.

\vskip 2mm
${\mathscr K}^+$ may be realized as a tube domain in $\BC^{\ell}$ as follows. Let $L'$ be the dual lattice
of $L$, and let $D(L):=L'/L$ be the discriminant group of $L$.
Suppose that $z\in L$ is a primitive norm $0$ vector (i.e., $\BQ z\cap L=\BZ z$ and $q(z)=0$)
and $z_*\in L'$ with $(z,z_*)=1.$ Then the sublattice
$K:=L\cap z^\perp \cap z_*^\perp$ is Lorenzian and
\begin{equation}
V=(K\otimes \BR)\oplus \BR z_* \oplus \BR z.
\end{equation}
If $z_L\in L\otimes \BC$ and $z_L=z_K +a z_* + bz$ with $z_*\in K\oplus \BC$ and $a,b\in \BC$, then we briefly write $z_L=(z_K,a,b).$ Let
\begin{equation}
{\mathscr H}_K:= \left\{ z_K=x_K + i\,y_K \in K\otimes\BC\,|\ x_K,\,y_K\in K\otimes \BR,\ y_K^2=(y_K,y_K)=0\,\right\}.
\end{equation}
We define the map $f_{K,L}:{\mathscr H}_K \lrt {\mathscr K}$ by
\begin{equation}
f_{K,L}(z_K):=[ (z_K,1,-q(z)-q(z_*))], \qquad z_K\in {\mathscr H}_K.
\end{equation}
Then we see easily that $f_{K,L}$ is biholomorphic. Let ${\mathscr H}_\ell$ be the component of
${\mathscr H}_K$ that is mapped to ${\mathscr K}^+$ under $f_{K,L}$. If we put the cone
$i\,C:={\mathscr H}_\ell \cap i\,(K\otimes \BR),$ then ${\mathscr H}_\ell=K\otimes \BR \oplus i\,C$.

\vskip 2mm
For $\beta\in D(L):=L'/L$ and $m\in\BZ+q(\beta)$ with $m<0$, we define a subset ${\tilde H}(\beta,m)$ of
$Gr (L)$ by
\begin{equation*}
{\tilde H}(\beta,m):=\bigcup_{\lambda\in \beta+L \atop q(\lambda)=m} \lambda^\perp.
\end{equation*}
Here $\lambda^\perp$ denotes the orthogonal complement of $\lambda$ in $Gr (L)$, that is,
the set of all positive definite two-dimensional subspaces $w\subset V$ with $w\perp \lambda$.
Let $\G$ be the orthogonal group $\G (L)$ or a subgroup of $\G (L)$ of finite index. Then
\begin{equation}
H(\beta,m):=\sum_{\lambda\in \beta+L \atop q(\lambda)=m} \lambda^\perp.
\end{equation}
is a $\Gamma$-invariant divisor on ${\mathscr H}_\ell$ with support ${\tilde H}(\beta,m)$. It is the inverse image under the canonical projection of an algebraic divisor on
$\Gamma \backslash {\mathscr H}_\ell$, which we call the {\sf Heegner divisor of index} $(\beta,m).$

\vskip 3mm
Let $z\in L$ be a primitive norm $0$ vector and $z_*\in L'$ with $(z,z_*)=1.$ Let $N$ be the unique positive integer such that $(z,L)=N\BZ$. We set $K:=L\cap z^\perp \cap z_*^\perp$.
If $v\in V=L\otimes\BR$, we write $v_K$ for the orthogonal projection of $v$ to $K\otimes\BR$. Then
\begin{equation*}
v_K=v-(v,z)z_*+(v,z)(z_*,z_*)z-(v,z_*)z.
\end{equation*}
Thus if $v\in L'$, then $v_K\in K'$. Let $\zeta\in L$ with $(\zeta,z)=N$. Then it can be represented uniquely as
\begin{equation*}
\zeta=\zeta_K +Nz_* +Bz,\qquad \zeta\in K',\ B\in \BQ.
\end{equation*}
It can be shown easily that $L$ can be written as
\begin{equation}
L=K\oplus \BZ\zeta \oplus \BZ z.
\end{equation}
It follows from (5.6) that $|D(L)|=N^2\, |D(K)|$. Moreover we obtain the isometric embedding
$\mathfrak{c}_{K',L'}:K'\lrt L'$ defined by
\begin{equation*}
\mathfrak{c}_{K',L'}(\gamma):=\gamma - \frac{(\gamma,\zeta)}{N}z,\qquad \gamma\in K'.
\end{equation*}
The kernel of the induced map $\overline{\mathfrak{c}}_{K',L'}:K'\lrt L'\lrt L'/L$ of $\mathfrak{c}_{K',L'}$ is equal to
$\{ \gamma\in K\,|\ (\gamma,\zeta)\equiv 0\,({\rm mod}\,N)\,\}.$

\vskip 3mm
We consider the sublattice of $L'$ defined by
\begin{equation*}
L'_0:=\left\{ \lambda\in L'\,|\ (\lambda,z)\equiv 0\,({\rm mod}\,N)\,\right\}.
\end{equation*}
Clearly $L\subset L'_0.$ The map $p:L'_0\lrt K'$ defined by
\begin{equation}
p(\lambda):=\lambda_K - \frac{(\lambda,z)}{N}\zeta_K,\qquad \lambda\in L'_0
\end{equation}
is a projection of $L'_0$ onto $K'$. It is easy to see that $p(L)=K$. Thus $p$ induces a surjective map ${\bar p}:L'_0/L \lrt K'/K$. We note that
\begin{equation*}
L'_0/L:=\left\{ \lambda\in L'/L\,|\ (\lambda,z)\equiv 0\,({\rm mod}\,N)\,\right\}.
\end{equation*}
Let $\beta\in D(L):=L'/L$ and $m\in\BZ+q(\beta)$ with $m<0$. We consider the theta integral
\begin{equation}
\Phi_{\beta,m}^L(v,s):=\int_{SL(2,\BZ)\backslash \BH} \langle F_{\beta,m}^L (\tau,s),\Theta_L(\tau,v)\rangle y
\frac{dx\,dy}{y^2},\quad v\in Gr(L),\ s\in \BC,
\end{equation}
where $F_{\beta,m}^L (\tau,s)$ denotes the Maass-Poincar{\'e} series of index $(\beta,m)$ with
$\tau\in\BH$ (cf.\,\cite[Definition 1.8,\,p.\,29]{Br}) and $\Theta_L(\tau,v):=\Theta_L(\tau,v;0,0)$
is the Siegel theta function (cf.\,\cite[(2.2),\,p.\,39]{Br}).

\vskip 3mm
The set of norm $1$ vectors in $V=L\otimes \BR$ has two component, one of them being given by
\begin{equation*}
V_1:=\left\{ v_1\in V\,|\ v_1^2=1,\ \,(z,v_1)>0\ \right\}.
\end{equation*}
We may identify $Gr(L)$ with via
\begin{equation*}
V_1 \lrt Gr (L),\qquad v_1 \mapsto \BR v_1.
\end{equation*}
This is the hyperboloid model of the hyperbolic space. Moreover $Gr (L)$ can be realized as the upper half space
\begin{equation*}
{\mathscr H}:=\left\{ (x_0,x_1,\cdots,x_{\ell-2})\in \BR^{\ell-1}\,|\ \,x_0 >0\ \right\}.
\end{equation*}
This is known as the upper half space model of hyperbolic space.

\vskip 3mm
In the coordinates of $V_1$ the Fourier expansion of $\Phi_{\beta,m}^L$ is given by
\begin{eqnarray}
% \nonumber % Remove numbering (before each equation)
  \Phi_{\beta,m}^L (v_1) &=& \frac{1}{\sqrt{2}\,(z,v_1)}\,\Phi_{\beta,m}^K + 4\,\sqrt{2}\,\pi\,(z,v_1)
  \sum_{\ell (N)}\,b(\ell z/N,0)\,\mathbb{B}_2 (\ell/N) \nonumber\\
   && +\, 4\,\sqrt{2}\,\pi\,(z,v_1) \sum_{\lambda\in p(\beta)+K \atop q(\lambda)=m}
   \mathbb{B}_2 \left( \frac{(\lambda,v_1)}{(z,v_1)}+(\beta,z_*) \right)\\
   && +\, 4\,\sqrt{2}\,(\pi/(z,v_1))^{-k}\,\sum_{\lambda\in  K'-0}
   \sum_{\delta\in L'_0/L \atop q(\delta)=\lambda+K}  b(\delta,q(\lambda))\,|\lambda|^{1-k} \nonumber \\
   &&\, \times \sum_{n\geq 1} n^{-k-1}\,e\left( n\,\frac{(\lambda,v_1)}{(z,v_1)}+ n\,(\delta,z')\right)\,
   K_{1-k}(2\pi n\,|\lambda|/(z,v_1)).\nonumber
\end{eqnarray}
Here $\Phi_{\beta,m}^K$ denotes the constant $\Phi_{\beta,m}^K (w,1-k/2)$, $b(\gamma,n)$ denotes the Fourier coefficients of the Poincar{\'e} series $F_{\beta,m}^L (\tau,1-k/2)$, $K_\nu$ is the modified Bessel function of the third kind, and
\begin{equation*}
\mathbb{B}_2 (x):=-2\, \sum_{n\in \BZ^\times} \frac{e(nx)}{(2\pi i n)^2}.
\end{equation*}

\begin{definition}
Using the above notations we define two functions $\xi_{\beta,m}^L$ and $\psi_{\beta,m}^L$ on $Gr(L)$ by
\begin{eqnarray}
% \nonumber % Remove numbering (before each equation)
  \xi_{\beta,m}^L (v_1) &=& \frac{\Phi_{\beta,m}^K}{\sqrt 2}\,
  \left( \frac{1}{(z,v_1)}-2\,(z_*,v_1) \right) + \frac{4\,\sqrt{2}}{(z,v_1)}\,
  \sum_{\lambda\in p(\beta)+K \atop q(\lambda)=m} (\lambda,v_1)^2 \nonumber  \\
&& +\, 4\,\sqrt{2}\,(\pi/(z,v_1))^{-k}\,\sum_{\lambda\in  K'-0}
   \sum_{\delta\in L'_0/L \atop q(\delta)=\lambda+K}  b(\delta,q(\lambda))\,|\lambda|^{1-k}  \\
   &&\, \times \sum_{n\geq 1} n^{-k-1}\,e\left( n\,\frac{(\lambda,v_1)}{(z,v_1)}+ n\,(\delta,z_*)\right)\,
   K_{1-k}(2\pi n\,|\lambda|/(z,v_1))\nonumber
\end{eqnarray}
and
\begin{equation*}
 \psi_{\beta,m}^L (v_1):=\Phi_{\beta,m}^L (v_1)-\xi_{\beta,m}^L (v_1).
\end{equation*}
\end{definition}
Note that this definition depends on the choice of the vector $z$ and $z_*$ and
$\xi_{\beta,m}^L =\xi_{-\beta,m}^L.$  It is known that $\Phi_{\beta,m}^L,\,\xi_{\beta,m}^L$ and
$\psi_{\beta,m}^L$ are real-valued functions (cf.\,\cite[Theorem 2.14]{Br}). Furthermore
according to \cite[Theorem 3.4]{Br},
$\psi_{\beta,m}^L$ is the restriction of a continuous piecewise linear function on $V$ and its only singularities lie on the Heegner divisor $H(\beta,m)$.

\begin{definition}
For $v\in Gr (L)-H(\beta,m)$, we define
\begin{equation*}
\Phi_{\beta,m}(v):={\rm CT}(\Phi_{\beta,m}(v,1-k/2)).
\end{equation*}
Here ${\rm CT}(\Phi_{\beta,m}(v,1-k/2))$ is the constant term of the Laurent expansion of
$\Phi_{\beta,m}(v,s)$ at $s=1-k/2.$ The set of all points $v\in Gr (L)-H(\beta,m)$ is not connected.
The connected components of $Gr (L)-H(\beta,m)$ are called the {\sf Weyl chamber} of $Gr(L)$ of
index $(\beta,m)$.
\end{definition}

\begin{definition}
Let $W$ be a Weyl chamber of index $(\beta,m)$. We define $\rho_{\beta,m}(W)$ to be the unique vector in $V$ satisfying the following property\,:
\begin{equation*}
\Psi_{\beta,m}^L(w):=8\,\sqrt{2\pi}\,(w,\rho_{\beta,m}(W)),\quad w\in W.
\end{equation*}
We call $\rho_{\beta,m}(W)$ the {\sf Weyl vector} of $W$. We refer to \cite[Definition 3.3]{Br} for the precise definition of $\Psi_{\beta,m}^L$.
\end{definition}

\begin{definition}
Let $f:\BH\lrt \BC \left[ D(L)\right]$ be a nearly holomorphic modular form for $Mp_2(\BZ)$ of weight $k=1-{\frac {\ell}2}$ with
its principal part
\begin{equation*}
\sum_{\gamma\in D(L)}\sum_{n\in \BZ+q(\g)\atop n<0} c(\g,n)\,{\mathfrak e}_\g (n\tau)
\end{equation*}
We call the components of
\begin{equation*}
Gr(K)-\bigcup_{\g\in L_0'/L} \bigcup_{n\in \BZ+q(\g)\atop n<0,\, c(\g,n)\neq 0} H(p(\g),n)
\end{equation*}
the {\sf Weyl\ chambers} of $Gr(K)$ with respect to $f$. Let $W$ be such a Weyl chamber. We define
\begin{equation*}
{\mathbb A}_L:=\left\{ (\g,n)\in L_0'/L\,| \ \g\in L_0'/L,\ n\in \BZ+q(\g)\ {\rm with}\ n<0,\ c(\g,n)\neq 0\,\right\}.
\end{equation*}
For any $(\g,n)\in {\mathbb A}_L,$ there is a Weyl chamber $W_{\g,n}$ of $Gr(K)$ of index
$({\bar p}(\g),n)$ such that $W\subset W_{\g,n}.$
Then
\begin{equation*}
W=\bigcap_{\g\in L'_0/L} \bigcap_{n\in \BZ+q(\g)\atop n<0,\, c(\g,n)\neq 0} W_{\g,n}=\bigcap_{(\g,n)\in {\mathbb A}_L}W_{\g,n}.
\end{equation*}
We define the {\sf Weyl\ vector} $\rho_f(W)$ attached to a Weyl chamber $W$ and $f$ by
\begin{equation}
\rho_f(W)={\frac 12}\sum_{\gamma\in L_0'/L}\sum_{n\in \BZ+q(\g)\atop n<0} c(\g,n)\,
\rho_{{\bar p}(\g),n}(W_{\g,n}).
\end{equation}

\end{definition}

R. Borcherds \cite{Bo1} proved the following

\begin{theorem} Let $L$ be an even lattice of signature $(2,\ell)$ with $\ell\geq 3$, and $z\in L$ a primitive isotropic vector. Let
$z_*\in L',\ K=L\cap z^\perp \cap z_*^\perp$ and $p:L'_0\lrt K'$ is the map defined by (5.7). Assume that $K$ contains an isotropic vector.
Let $f$ be a nearly holomorphic modular form of weight $k=1-{\frac {\ell}2}$ whose Fourier coefficients $c(\g,n)$ are {\sf integral} for
$n<0.$ Then the function
\begin{equation}
\Psi(Z)=\prod_{\beta\in D(L)}\prod_{m\in \BZ+q(\beta)\atop m<0} \Psi_{\beta,m}(Z)^{c(\beta,m)/2}
\end{equation}
is a meromorphic function on ${\mathscr H}_\ell$ with the following properties:
\vskip 0.2cm\noindent
{\sf (Bo1)} $\Psi(Z)$ is a meromorphic modular form of rational weight ${\frac {c(0,0)}2}$ for $\G (L)$ with some multiplier system $\chi$
of finite order. If $c(0,0) \in 2\BZ$, then $\chi$ is a character.
\vskip 0.2cm\noindent
{\sf (Bo2)} The divisor of $\Psi(Z)$ on ${\mathscr H}_\ell$ is given by
\begin{equation}
(\Psi)={\frac 12}\sum_{\beta\in D(L)}\sum_{m\in \BZ+q(\beta)\atop m<0} c(\beta,m) H(\beta,m).
\end{equation}
Let $m_{\beta,m}$ be the multiplicity of $H(\beta,m)$. Then
$$
m_{\beta,m}=\begin{cases} 2 \ & {\rm if}\ 2\beta=0\ {\rm in}\ L'/L \\
1 \ & {\rm if}\ 2\beta\neq 0 \ {\rm in}\ L'/L \end{cases}$$
We note that $c(\beta,m)=c(\beta,-m)$ and $H(\beta,m)=H(-\beta,m).$
\end{theorem}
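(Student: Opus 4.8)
The plan is to realize $\Psi$ as the exponential of a regularized theta lift, following Borcherds \cite{Bo1} in the concrete form of Bruinier's notes \cite{Br}. First I would introduce the regularized integral
\begin{equation*}
\Phi_f(v):=\int_{SL(2,\BZ)\backslash \BH}^{\mathrm{reg}}\big\langle f(\tau),\Theta_L(\tau,v)\big\rangle\,\frac{dx\,dy}{y^{2}},\qquad v\in Gr(L),
\end{equation*}
with $\Theta_L$ the Siegel theta function of \S5, and expand $f$ into Maass-Poincar{\'e} series along its principal part $\sum_{\beta,m}c(\beta,m)\,{\mathfrak e}_\beta(m\tau)$, so that $\Phi_f=\sum_{\beta,m}c(\beta,m)\,\Phi_{\beta,m}^L$ up to an explicit term depending only on $c(0,0)$. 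Since $\Theta_L(\tau,\gamma v)=\Theta_L(\tau,v)$ for $\gamma\in\G(L)$, the function $\Phi_f$ is $\G(L)$-invariant; and by \cite[Theorems 2.14 and 3.4]{Br} it is real-valued, real-analytic on the complement of $\bigcup_{c(\beta,m)\neq0}H(\beta,m)$, and along a generic point of $\lambda^\perp\subset H(\beta,m)$ it carries a logarithmic singularity of the shape $-2\,c(\beta,m)\log|(\lambda,\cdot)|^{2}$.

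The core of the argument will be to identify $\Phi_f$, in the tube-domain coordinates of \S5, with the logarithm of a Petersson norm. Fixing $z,z_*$ as in the statement and passing to $K=L\cap z^\perp\cap z_*^\perp$ and ${\mathscr H}_\ell=K\otimes\BR\oplus iC$, I would substitute the Fourier expansion of each $\Phi_{\beta,m}^L$ (the one displayed before Definition 5.1, together with the splitting $\Phi_{\beta,m}^L=\xi_{\beta,m}^L+\psi_{\beta,m}^L$) into $\Phi_f$ and collect terms: the piecewise-linear singular parts $\psi_{\beta,m}^L$ assemble, via Definitions 5.3 and 5.4, into the Weyl-vector term $(Z,\rho_f(W))$ on a Weyl chamber $W$; the regular parts $\xi_{\beta,m}^L$, whose only non-elementary ingredients are the exponentially decaying Bessel factors $K_{1-k}$, assemble into the logarithm of an infinite product convergent on a neighborhood of $W$; and the constant terms produce a factor carrying the weight. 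The conclusion would be that on $W$
\begin{equation*}
\Phi_f(Z)=-4\log|\Psi(Z)|+\Big(\text{a weight-}\tfrac{c(0,0)}{2}\text{ metric term}\Big)+\mathrm{const},
\end{equation*}
where
\begin{equation*}
\Psi(Z)=e\big((Z,\rho_f(W))\big)\prod_{\lambda\in K'\atop (\lambda,W)>0}\big(1-e((\lambda,Z))\big)^{\sum_{\delta}c(\delta,q(\lambda))}
\end{equation*}
is exactly the product $\prod_{\beta,m}\Psi_{\beta,m}(Z)^{c(\beta,m)/2}$ of the theorem (here $(\lambda,W)>0$ means $(\lambda,w)>0$ for every $w\in W$); equivalently, $\Phi_f=-4\log\|\Psi\|_{\mathrm{Pet}}$ for the Petersson metric on forms of weight $c(0,0)/2$.

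Granting this identity, the assertions {\sf (Bo1)} and {\sf (Bo2)} follow. On each Weyl chamber $\Psi$ is a convergent product of non-vanishing holomorphic functions, hence holomorphic and non-vanishing there, while $\log|\Psi|=-\tfrac14\Phi_f+(\text{smooth})$ extends over the $H(\beta,m)$ with at worst logarithmic singularities; since a function that is holomorphic off an analytic hypersurface and whose modulus has logarithmic singularities there is meromorphic, $\Psi$ extends meromorphically to ${\mathscr H}_\ell$. The $\G(L)$-invariance of $\Phi_f$ then forces $\Psi(\gamma Z)=\chi(\gamma)\,j(\gamma,Z)^{c(0,0)/2}\Psi(Z)$ for all $\gamma\in\G(L)$, where $j(\gamma,Z)$ is the standard weight-one automorphy factor and $|\chi(\gamma)|=1$; that is, $\Psi$ is a meromorphic modular form of weight $c(0,0)/2$ with unitary multiplier system $\chi$, which is an ordinary character once $c(0,0)\in2\BZ$. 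For the divisor I would compare the singularity $-2\,c(\beta,m)\,m_{\beta,m}\log|(\lambda,Z)|+O(1)$ of $\Phi_f$ along a generic point of $\lambda^\perp\subset H(\beta,m)$ with $\Phi_f=-4\log|\Psi|+O(1)$, obtaining $\mathrm{ord}_{H(\beta,m)}(\Psi)=\tfrac12\,c(\beta,m)\,m_{\beta,m}$; here $m_{\beta,m}=2$ when $2\beta=0$ in $L'/L$ (because then $\lambda$ and $-\lambda$ lie in the same coset $\beta+L$ and $\lambda^\perp$ is counted twice in $H(\beta,m)$) and $m_{\beta,m}=1$ otherwise, while the symmetries $c(\beta,m)=c(-\beta,m)$ and $H(\beta,m)=H(-\beta,m)$ are immediate from the definitions.

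The point I expect to be the main obstacle is the finiteness of the order of $\chi$, together with the verification that the product expansions attached to the various Weyl chambers (and to the distinct $0$-dimensional cusps of $\G(L)\backslash {\mathscr H}_\ell$) patch into the one meromorphic function $\Psi$. Invariance of $\Phi_f$ only yields $|\chi|\equiv1$; to upgrade this I would inspect the Fourier expansion of $\Psi$ more closely: its leading coefficient is a root of unity, and the Weyl vectors $\rho_f(W),\rho_f(W')$ of two chambers sharing a wall differ by a vector of $K$, whence $\chi$ takes values in a finite cyclic group. I would carry out this last step exactly as in \cite{Bo1}. The remaining inputs (convergence of the product, the precise constants in the Fourier expansion above, and the piecewise-linearity and singularity type of $\psi_{\beta,m}^L$) are established in \cite{Br}, and I would invoke them rather than reprove them.
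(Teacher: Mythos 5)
The paper offers no proof of its own for this theorem; it simply cites Borcherds \cite[Theorem 13.3]{Bo1} and Bruinier \cite[Theorem 3.22]{Br}, and your sketch is an accurate outline of exactly that singular-theta-lift argument (regularized lift of $f$, Fourier expansion in the tube-domain coordinates, Weyl-vector and Bessel terms assembling into $-4\log|\Psi|$ plus the weight term, and the multiplier/multiplicity bookkeeping). So your proposal takes essentially the same route as the proof the paper points to, and the steps you defer are precisely the ones established in \cite{Bo1,Br}.
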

\vskip 0.2cm\noindent
{\sf (Bo3)} Let $W\subset {\mathscr H}_\ell$ be a Weyl chamber of $Gr(K)$ with respect to $f$ and let
$$m_0={\rm min}\big\{ n\BQ\,|\ c(\g,n)\neq 0\
{\rm for\ all}\ \g\in L'/L\,\big\}.$$
On the set $U_\Psi$ of $Z\in {\mathscr H}_\ell$ which satisfy $q(Y)>|m_0|,$ and which belong to the complement of the set of
poles of $\Psi(Z)$, the function $\Psi(Z)$ has the Borcherds product expansion
\begin{equation}
\Psi(Z)=C\, e^{2\pi i \,(\rho_f(W),Z)}\prod_{\lambda\in K' \atop (\lambda, W)>0}
\prod_{\delta\in L_0'/L\atop p(\delta)=\lambda+K} \left( 1-e^{2\pi i\, ((\delta,z_*)+(\lambda,Z))}\right)^{c(\delta,q(\lambda))},
\end{equation}
where $C$ is a constant with $|C|=1$, and $\rho_f(W)\in K\otimes \BR$ is the Weyl vector attached to $W$ and $f$. Here for
$\lambda\in K'$ we write $(\lambda,W)>0$ if $(\lambda,w)>0$ for all $w$ in the interior of $W$.
\begin{proof}
The proof can be found in \cite[Theorem 13.3,\,\,pp.\,544--546]{Bo1} or \cite[Theorem 3.22,\,pp.\,88--91]{Br}.
\end{proof}
\begin{theorem}
Let $L$ be an even lattice of signature $(2,\ell)$, that splits two orthogonal hyperbolic planes over $\BZ$. Then every meromorphic modular form for $\G (L)$, whose divisor is a linear combination of Heegner divisors, is a Borcherds product in the sense of Theorem 5.1.
\end{theorem}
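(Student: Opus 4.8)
The plan is to produce $F$ explicitly as one of the Borcherds products of Theorem 5.1 by constructing the right nearly holomorphic input form, and then to identify the two functions up to a unimodular constant; since the converse direction (Borcherds products have Heegner divisors) is Theorem 5.1, only the surjectivity of the Borcherds lift needs to be established. We may assume $\ell\geq 3$, the case $\ell=2$ being classical. Let $r\in\BQ$ be the weight of $F$ and $\chi_F$ its multiplier system, and write
$${\rm div}(F)=\frac{1}{2}\sum_{\beta\in D(L)}\ \sum_{m\in\BZ+q(\beta),\ m<0} c(\beta,m)\,H(\beta,m),$$
with integers $c(\beta,m)$ satisfying $c(\beta,m)=c(-\beta,m)$ (this symmetry being forced by $H(\beta,m)=H(-\beta,m)$, and the multiplicities $m_{\beta,m}$ of Theorem 5.1 being absorbed into the factor $\frac{1}{2}$). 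The goal is a nearly holomorphic modular form $f:\BH\lrt\BC[D(L)]$ for $Mp_2(\BZ)$ of weight $k=1-\frac{\ell}{2}$ whose principal part is $\sum_{\beta}\sum_{n<0} c(\beta,n)\,{\mathfrak e}_\beta(n\tau)$ with exactly these integers as negative Fourier coefficients, whose constant term is $c(0,0)=2r$, and which induces the multiplier system $\chi_F$. Granting such an $f$, Theorem 5.1 applies --- note that since $L$ splits two orthogonal hyperbolic planes, $K=L\cap z^\perp\cap z_*^\perp$ splits one hyperbolic plane and so contains an isotropic vector --- and yields the Borcherds product $\Psi=\Psi_f$, a meromorphic modular form for $\G(L)$ whose divisor by {\sf (Bo2)} is $\frac{1}{2}\sum c(\beta,m)H(\beta,m)={\rm div}(F)$ and whose weight by {\sf (Bo1)} is $c(0,0)/2=r$.

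The heart of the matter is the existence of such an $f$. By Borcherds' duality (Serre-duality) criterion for nearly holomorphic vector-valued modular forms, a formal principal part $\sum_\beta\sum_{n<0} c(\beta,n){\mathfrak e}_\beta(n\tau)$ with $c(\beta,n)=c(-\beta,n)$ is realized by a nearly holomorphic modular form of weight $1-\frac{\ell}{2}$ and representation $\rho_L$ if and only if
$$\sum_{\beta\in D(L)}\ \sum_{m>0} c(\beta,-m)\,a_g(\beta,m)=0$$
for every cusp form $g=\sum_\beta\sum_{m>0} a_g(\beta,m){\mathfrak e}_\beta(m\tau)$ of weight $1+\frac{\ell}{2}$ for the dual Weil representation $\rho_L^{\ast}$. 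To see that this vanishes whenever the $c(\beta,m)$ come from a principal divisor, we use the modularity of the Heegner-divisor generating series (Borcherds \cite{Bo2}; see Appendix C): the series $\sum_\beta\sum_m [H(\beta,m)]\,{\mathfrak e}_\beta(m\tau)$, with coefficients in ${\rm CH}^1(\G(L)\ba{\mathscr H}_\ell)\otimes\BC$, is a modular form of weight $1+\frac{\ell}{2}$ and type $\rho_L$, once the automorphy (Hodge) line bundle is supplied as the $\beta=0,\ m=0$ term, this term being responsible for the weight. Via Serre duality this modularity says precisely that the obstruction functional on $S_{1+\ell/2,\rho_L^{\ast}}$ attached to a datum $(c(\beta,m))$ factors through the class $\sum_{\beta,m}c(\beta,m)[H(\beta,m)]+r\,[\text{Hodge}]\in{\rm CH}^1\otimes\BC$; for $(c(\beta,m))=$ the data of ${\rm div}(F)$ this class is that of the principal divisor $2\,{\rm div}(F)-2r\,[\text{Hodge}]$ trivialized by $F$, hence it vanishes, and so does the obstruction. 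This is exactly where the hypothesis that $L$ splits two orthogonal hyperbolic planes is indispensable: it is the regime in which the Borcherds lift is known to be surjective, so that the compatibility above --- together with the simultaneous matching of $c(0,0)=2r$ and of the multiplier system $\chi_F$, all of which are pinned down by the common divisor --- can be carried through; the detailed bookkeeping (type $\rho_L$ versus its dual, Eisenstein versus cuspidal parts, integrality) is in \cite{Br, Bo1, Bo2}.

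With $f$ in hand, $\Psi_f$ and $F$ are meromorphic modular forms for $\G(L)$ of equal weight and multiplier system with the same divisor on ${\mathscr H}_\ell$, so $F/\Psi_f$ is a weight-$0$ meromorphic modular form with empty divisor, i.e. a holomorphic, nowhere-vanishing automorphic function on $\G(L)\ba{\mathscr H}_\ell$. Since $\ell\geq 3$, the boundary of the Baily--Borel compactification $X^{\ast}$ of $\G(L)\ba{\mathscr H}_\ell$ has codimension $\geq 2$; a meromorphic function whose polar divisor (pure of codimension one) misses the open part must therefore have empty polar divisor on all of $X^{\ast}$, so by the Koecher principle $F/\Psi_f$ extends to a holomorphic function on the compact normal variety $X^{\ast}$ and hence is constant. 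By the normalization $|C|=1$ in {\sf (Bo3)} this constant is unimodular, so $F=c\,\Psi_f$ with $|c|=1$ is a Borcherds product in the sense of Theorem 5.1. The principal obstacle in this scheme is the middle step --- deducing the vanishing of the obstruction in $S_{1+\ell/2,\rho_L^{\ast}}$ from the principality of ${\rm div}(F)$ --- for which the modularity of the Heegner-divisor generating series in ${\rm CH}^1$ and the surjectivity of the Borcherds lift for lattices splitting two hyperbolic planes (both due to Borcherds, expounded in \cite{Br}) are the essential inputs.
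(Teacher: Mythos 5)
Your overall frame is the standard reduction and is not in dispute: build a nearly holomorphic $f$ of weight $1-\ell/2$ whose principal part matches the multiplicities of ${\rm div}(F)$, feed it into Theorem 5.1, and then identify $F$ with $\Psi_f$ by a Koecher-principle argument on the Baily--Borel compactification. (The paper itself gives no argument here; it simply cites \cite[Theorem 5.12]{Br}.) The difficulty of the theorem is, however, entirely concentrated in your middle step, and as written that step is circular. By the Serre-duality criterion (Theorem C.1), the existence of $f$ is equivalent to the vanishing of the residue pairing of the data $(c(\beta,m))$ against every cusp form $g\in S_{1+\ell/2,\rho_L^{\ast}}$. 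You propose to obtain this vanishing from the modularity of the Heegner-divisor generating series with coefficients in the divisor class group (Theorem C.3). But, as Appendix C of this paper itself records, that modularity is \emph{deduced from} the forward direction (Theorem 5.1, via Theorem C.2) together with Serre duality, and what it asserts is the inclusion in the wrong direction: every functional on principal parts that factors through the divisor-class map $\xi_M$ is realized by a holomorphic modular form. What you need is the reverse inclusion --- that the pairing against an \emph{arbitrary} cusp form of weight $1+\ell/2$ factors through the divisor-class map, so that principality of the Heegner combination kills it --- and that reverse inclusion is equivalent to the converse theorem you are proving. Your sentence that the splitting hypothesis puts us ``in the regime in which the Borcherds lift is known to be surjective'' concedes exactly this: the surjectivity of the lift \emph{is} Theorem 5.2, so invoking it begs the question.

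The genuinely new input in Bruinier's proof is analytic rather than formal: one studies the automorphic Green functions $\Phi_{\beta,m}$ attached to the Maass--Poincar{\'e} series $F_{\beta,m}$, establishes their integrability and Fourier expansions (this is where the hypothesis that $L$ splits two orthogonal hyperbolic planes actually enters, through the expansion at a cusp), and shows that $-\log\|F\|^2$ agrees up to an additive constant with the corresponding linear combination of the $\Phi_{\beta,m}$; from this identity --- equivalently, from an injectivity statement for the associated theta lift on $S_{1+\ell/2,\rho_L^{\ast}}$, which is precisely the dual of the vanishing you need --- one reads off that the $(c(\beta,m))$ occur as the principal part of a nearly holomorphic form. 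Nothing in your proposal substitutes for this, so the vanishing of the obstruction, and hence the existence of $f$, is not established. (Two smaller loose ends, repairable once the main step is supplied: the Serre-duality criterion pins down $c(0,0)$ via the Eisenstein part, and one must check that this forced value is $2r$ --- in practice this is recovered a posteriori from the final comparison of weights, since a nowhere-vanishing holomorphic form of nonzero weight cannot exist on these quotients; and the matching of multiplier systems of $F$ and $\Psi_f$ also deserves a word, since a priori the quotient is only invariant up to a finite-order character.)
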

\begin{proof}
The proof can be found in \cite[Theorem 5.12,\,pp.\,139--140]{Br}.
\end{proof}

\end{section}

\vskip 10mm
%%%%%%%%%%%%%%%%%%%%%%%%%%%%%%%%%%%%%%%%%%%%%%%%%%%%%%%%%%%%%%%%%%%%%%%%%%%%%%%%%%%%%%%%%%%%%%%%%%%%%%%%%%%%%%%%%%%%%%%%%%%%%%%%%%%%%%%%%%%%%%%%%%%%%%%%
%%%%%%%%%%%%%%%%%%%%%%%%%%%%%%%%%%%%%%%%%%%%%%%%%%%%%%%%%%%%%%%%%%%%%%%%%%%%%%%%%%%%%%%%%%%%%%%%%%%%%%%%%%%%%%%%%%%%%%%%%%%%%%%%%%%%%%%%%%%%%%%%%%%%%%%%
%%%%%%%%
%%%%%%%%
%%%%%%%%      6. Derivatives of the Rankin $L$-functions associated to the orthogonal Shimura varieties
%%%%%%%%
%%%%%%%%
%%%%%%%%%%%%%%%%%%%%%%%%%%%%%%%%%%%%%%%%%%%%%%%%%%%%%%%%%%%%%%%%%%%%%%%%%%%%%%%%%%%%%%%%%%%%%%%%%%%%%%%%%%%%%%%%%%%%%%%%%%%%%%%%%%%%%%%%%%%%%%%%%%%%%%%%
%%%%%%%%%%%%%%%%%%%%%%%%%%%%%%%%%%%%%%%%%%%%%%%%%%%%%%%%%%%%%%%%%%%%%%%%%%%%%%%%%%%%%%%%%%%%%%%%%%%%%%%%%%%%%%%%%%%%%%%%%%%%%%%%%%%%%%%%%%%%%%%%%%%%%%%%
%%%%%%%%%%%%%%%%%%%%%%%%%%%%%%%%%%%%%%%%%%%%%%%%%%%%%%%%%%%%%%%%%%%%%%%%%%%%%%%%%%%%%%%%%%%%%%%%%%%%%%%%%%%%%%%%%%%%%%%%%%%%%%%%%%%%%%%%%%%%%%%%%%%%%%%%
%%%%%%%%%%%%%%%%%%%%%%%%%%%%%%%%%%%%%%%%%%%%%%%%%%%%%%%%%%%%%%%%%%%%%%%%%%%%%%%%%%%%%%%%%%%%%%%%%%%%%%%%%%%%%%%%%%%%%%%%%%%%%%%%%%%%%%%%%%%%%%%%%%%%%%%%
\begin{section}{{\bf Derivatives of the Rankin $L$-functions associated to the orthogonal Shimura varieties}}
\setcounter{equation}{0}
\vskip 3mm
In this section we briefly review the relations between the Faltings height pairing of arithmetic special divisors and CM cycles on Shimura varieties associated to orthogonal groups of signature $(n,2)$, and  central derivatives of the Rankin $L$-functions associated with the above Shimura varieties obtained by J.-H. Bruinier and T. Yang \cite{BY2}.

\vskip 3mm
Let $(V,Q)$ be the quadratic space over $\BQ$ of signature $(n,2).$ Let $H:={\rm GSpin}(V)$ and
$G:=SL_2$, viewed as an algebraic group over $\BQ$. Its associated Hermitian symmetric space is
the Grassmannian
\begin{equation*}
  {\bf D}:=\left\{ z\subset V(\BR)\,|\ \dim z=2,\ \, Q|_z < 0 \,\right\}
\end{equation*}
of oriented negative definite two dimensional subspaces of $V(\BR)$. Let
\begin{equation*}
Mp(2,\BR):=\left\{ (g,\phi(\tau))\,|\ g=[a,b,c,d]\in SL(2,\BR),\ \,\phi(\tau)^2=c\tau+d, \tau\in
{\mathbb H} \,\right\}
\end{equation*}
be the metaplectic group with multiplication
\begin{equation*}
(g_1,\phi_1(\tau))\cdot (g_2,\phi_2(\tau))=(g_1g_2,\phi_1(g_2\tau)\phi_2(\tau)).
\end{equation*}
Let $L\subset V$ be an even lattice, and let $L'$ be the dual lattice. We denote by $S_L$ the subspace of Schwartz functions in the Schwartz space $S(V(\mathbb A_f))$ which are supported on
$L'\otimes {\widehat \BZ}$ and which are constant on cosets  of ${\widehat L}:=L\otimes {\widehat \BZ}$.
For any $\mu\in D(L):=L'/L$, the characteristic function $\phi_\mu:={\rm char}(\mu+{\widehat L})$ belongs to $S_L$ and
\begin{equation*}
S_L=\bigoplus_{\mu\in D(L)} \BC\,\phi_\mu \subset S(V(\mathbb A_f)).
\end{equation*}
Let $\rho_L:Mp(2,\BZ)\lrt {\rm Aut}(S_L)$ be the Weil representation on $S_L$ defined by
\begin{equation}
\rho_L (T)(\phi_\mu):=e^{2\pi i\, (\mu,\mu)}\phi_\mu
\end{equation}
and
\begin{equation}
\rho_L (S)(\phi_\mu):=\frac{e^{\frac{2-n}{4}}}{\sqrt{|D(L)|}}\,\sum_{\nu\in D(L)} e^{-2\pi i \,(\mu,\nu)}\phi_\nu,
\end{equation}
where
\begin{equation*}
T:=([1,1,0,1],1)\quad {\rm and}\quad S:=([0,-1,1,0],\sqrt{\tau})
\end{equation*}
are the standard generators of $Mp(2,\BZ).$
\vskip 2mm
Assume $k\leq 1$. A $C^2$-function $f:{\mathbb H}\lrt S_L$ is called a {\sf harmonic\ weak\ Maass\ form} of weight $k$ with respect to $Mp(2,\BZ)$ and $\rho_L$ if it satisfies the conditions (HM1)-(HM3):
\vskip 2mm\noindent
${\sf (HM1)} \ \ f(\gamma\cdot \tau)=\phi(\tau)^{2k} \rho_L({\tilde \gamma}) f(\tau),\ \,
\tau\in {\mathbb H}$\ \ for all ${\tilde \gamma}=(\gamma,\phi(\tau))\in Mp(2,\BZ).$
\vskip 2mm\noindent
${\sf (HM2)}$ \,There is a $S_L$-valued Fourier polynomial
\begin{equation}
P_f (\tau)=\sum_{\mu\in D(L)}\sum_{m\leq 0} c^+ (m,\mu)\, q^m \,\phi_\mu
\end{equation}
\ \ \ \ \ \ \ such that $f(\tau)-P_f (\tau)= O(e^{-\varepsilon y})$ as $y\lrt \infty$ for some $\varepsilon >0.$
\vskip 2mm\noindent
${\sf (HM3)}\ \ \Delta_k f=0$, where
\begin{equation}
\Delta_k:= -y^2 \left( \frac{\partial^2}{\partial x^2} + \frac{\partial^2}{\partial y^2} \right)
+ i\, k\, y \left( \frac{\partial}{\partial x} + i\,\frac{\partial}{\partial y} \right),\quad \tau=x+iy\in {\mathbb H}.
\end{equation}
We denote by $H(k,\rho_L)$ the vector space of all harmonic weak Maass forms of weight $k$
with respect to $Mp(2,\BZ)$ and $\rho_L$. Any $f\in H(k,\rho_L)$ has a unique decomposition $f=f^+ +f^-$, where
\begin{equation}
f^+ (\tau):=\sum_{\mu\in D(L)}\left(  \sum_{ m\in\BQ,\,  m\gg -\infty }
c^+ (m,\mu) \,q^m \right)\,\phi_\mu,\quad ({\rm the\ holomorphic\ part \ of}\ f)
\end{equation}
and
\begin{equation}
f^- (\tau):=\sum_{\mu\in D(L)}\left(  \sum_{m\in\BQ,\,  m<0}
c^- (m,\mu) \,\Gamma (1-2k,2\pi |m| y)\,q^m \right)\,\phi_\mu.
\end{equation}
\indent \indent \indent \indent \indent \indent\indent \indent (the non-holomorphic part of $f$)
\vskip 2mm\noindent
Here $\Gamma (s)$ is the usual Gamma function.

\vskip 3mm
We define the Maass lowering operator $L_k$ and the Maass raising operator $R_k$ by
\begin{equation}
L_k:=-2\,i\,y^2 \frac{\partial}{\partial {\overline \tau}},
\end{equation}
\begin{equation}
R_k:=-2\,i\, \frac{\partial}{\partial  \tau}+k\,y^{-1},
\end{equation}
and the Bruinier-Funke anti-linear differential operator \cite{BF}
\begin{equation*}
\xi_k: H(k,\rho_L) \lrt S_{2-k,{\bar \rho}_L}
\end{equation*}
by
\begin{equation}
f(\tau) \mapsto \xi_k (f)(\tau):=y^{k-2} \overline{L_k f(\tau)}.
\end{equation}
Here $S_{2-k,{\bar \rho}_L}$ denotes the vector space of all cusp forms of weight $2-k$ with respect to $Mp(2,\BZ)$ and ${\bar \rho}_L$. We see easily that the kernel of $\xi_k$ is the space of weakly holomorphic modular forms of weight $k$ with respect to $Mp(2,\BZ)$ and $\rho_L$, denoted by $M_{k,\rho_L}^{!}$.
We have the following exact sequence \cite[Corollary 3.8]{BF}
\begin{equation}
0\lrt M_{k,\rho_L}^{!} \lrt H(k,\rho_L) \lrt S_{2-k,{\bar \rho}_L} \lrt 0.
\end{equation}
There is a bilinear pairing between $M_{2-k,{\bar \rho}_L}$ and $H(k,\rho_L)$ defined by the Petersson scalar product
\begin{equation*}
\{ g,f\}:=(g,\xi_k (f))_{\rm Pet},\quad g\in M_{2-k,{\bar \rho}_L},\ f\in H(k,\rho_L).
\end{equation*}
It follows from the exactness of the sequence (6.10) that the induced pairing between
$S_{2-k,{\bar \rho}_L}$ and $H(k,\rho_L)/M_{k,\rho_L}^{!}$ is non-degenerate.

\vskip 3mm
Let $(V,Q),\ L,\ H={\rm GSpin}(V)\cdots$ be as above. Let $K\subset H(\mathbb A_f)$ be a compact open subgroup acting trivially on $S_L$. The Shimura variety
\begin{equation}
X_K:=H(\BQ)\backslash ({\bf D}\times H(\mathbb A_f))/K
\end{equation}
is a quasi-projective variety of dimension $n$. We consider the special divisors on $X_K$.
Let $x\in V(\BQ)$ be a vector of positive norm, and let $V_x$ be the orthogonal complement of $x$ in $V$. If $H_x$ is the stabilizer of $x$ in $H$, then $H_x\cong {\rm GSpin}(V_x)$. The sub-Grassmannian
\begin{equation}
{\bf D}_x:=\left\{ z\in {\bf D}\,|\ z\perp x \ \right\}
\end{equation}
defines an analytic divisor on ${\bf D}$. For $h\in H(\mathbb A_f)$, we consider the natural map
\begin{equation}
H_x(\BQ)\backslash ({\bf D}_x \times H_x(\mathbb A_f))/ (H_x(\mathbb A_f)\cap hKh^{-1}) \lrt X_K,
\quad (z,h_1)\mapsto (z,h_1 h).
\end{equation}
Its image defines a divisor $Z(x,h)$ on $X_K$ which is rational over $\BQ$. For $m\in \BQ^+$, we let
\begin{equation*}
\Omega_m:=\{ x\in V\,|\ Q(x)=m\,\}.
\end{equation*}
If $\Omega_m$ is not empty, then by Witt's theorem, we have $\Omega_m (\BQ)=H(\BQ)x_0$ and
$\Omega_m (\mathbb A_f)=H(\mathbb A_f)x_0$ for a fixed element $x_0\in \Omega_m (\BQ)$.

\vskip 2mm
For $\varphi \in S_L \subset S(\mathbb A_f),$ we may write
\begin{equation*}
{\rm supp}(\varphi)\cap \Omega_m (\mathbb A_f)=\bigsqcup_j K \alpha_j^{-1}x_0 \quad
{\rm a\ finite\ disjoint\ union},\ \alpha_j\in H(\mathbb A_f)
\end{equation*}
because ${\rm supp}(\varphi)$ is compact and $\Omega_m (\mathbb A_f)$ is a closed subset of
$V (\mathbb A_f)$. We set, for any $m\in \BQ^+$ and the above $\varphi\in S_L$,
\begin{equation}
Z(m,\varphi):=\sum_j \varphi (\alpha_j^{-1}x_0)\,Z(x_0,\alpha_j).
\end{equation}
We observe that $Z(m,\varphi)$ is a divisor on $X_K$ independent of the choice of $x_0$ and the representatives $\alpha_j$. We write, for $\mu\in D(L)$,
\begin{equation}
Z(m,\mu):=Z(m,\varphi_\mu).
\end{equation}
\indent
Let $f\in H(1-\frac{n}{2},{\bar \rho}_L).$ We consider the {\sf regularized\ theta\ integral}
\begin{equation}
\Phi (z,h,f):=\int_{\mathcal F}^{\rm reg} \langle f(\tau), \theta_L (\tau,z,h)\rangle d\mu (\tau),
\end{equation}
where $z\in {\bf D},\ h\in H(\mathbb A_f),\
{\mathcal F}=\{ \tau=x+iy\in \mathbb H\ |\ |x| \leq  \frac{1}{2},\ |\tau|\leq 1\,\}$ and $\theta_L$ is a $S_L$-valued theta function. We refer to \cite[pp.\,638-639]{BY2} or [Appendix A. (A.7)]
for the precise definition of $\theta_L$.
This integral is regularized, i.e., $\Phi (z,h,f)$ is defined as the constant term in the Laurent expansion at $s=0$ of the function
\begin{equation}
\lim_{T\lrt \infty}\int_{\mathcal F_T} \langle f(\tau), \theta_L (\tau,z,h)\rangle\, y^{-s}d\mu (\tau),
\end{equation}
where ${\mathcal F}_T:=\{ \tau=x+iy\in {\mathcal F}\,|\ y\leq T\,\}$. Then according to \cite[Theorem 4.2, pp.\,650-651]{BY2}, we know that $\Phi (z,h,f)$ has the following properties (R1)--(R4)\,:

\vskip 2mm\noindent
{\sf (R1)} $\Phi (z,h,f)$ is smooth on $X_K \smallsetminus Z(f)$, where
\begin{equation*}
Z(f):=\sum_{\mu\in D(L)}\sum_{m>0} c^+ (-m,\mu)\,Z(m,\mu).
\end{equation*}
%\vskip 2mm\noindent
{\sf (R2)} $\Phi (z,h,f)$ has a logarithmic singularity along the divisor $-2\,Z(f).$
\vskip 2mm\noindent
{\sf (R3)} $dd^c \Phi (z,h,f)$ can be extended to a smooth $(1,1)$-form on $X_K$. We have the \\
 \indent \indent Green current equation
\begin{equation*}
dd^c [\Phi (z,h,f)]+\delta_{Z(f)}=[dd^c \Phi (z,h,f)],
\end{equation*}
\indent \indent where $\delta_{Z(f)}$ denotes the Dirac current of a divisor $Z(f)$.

\vskip 2mm\noindent
{\sf (R4)} $\Delta_z \Phi (z,h,f)=\frac{n}{4}\cdot c^+ (0,0),$ where $\Delta_z$ is the invariant Laplacian on ${\bf D}$, normalized \\
\indent \indent as in \cite{Br}.

\vskip 3mm\noindent
It follows from (R1)--(R4) that $\Phi (z,h,f)$ is a Green function for $Z(f)$ in the sense of Arakelov geometry in the normalization of \cite{Sou}.

\vskip 3mm
Now we consider the CM $0$-cycles on $X_K$.
Let $U\subset V$ be a negative definite two-dimensional $\BQ$-subspace of $V$. It determines a two points subset $\{ z_U^{\pm} \} \subset {\bf D}$ given by $U(\BR)$ with the possible choices of orientation. Let $V_+\subset V$ be the orthogonal complement of $U$ over $\BQ$. Then we have the rational splitting
\begin{equation*}
V=V_+ \oplus U.
\end{equation*}
Let $T:={\rm GSpin}(U)\subset H$ and put $K_T:=K\cap T({\mathbb A}_f).$ Then we obtain the CM cycle
\begin{equation}
Z(U):=T(\BQ)\backslash \left( \{ z_U^{\pm} \}\times T({\mathbb A}_f)\right)/K_T \lrt X_K.
\end{equation}
Here each point in the cycle is counted with multiplicity $\frac{2}{w_{K,T}}$ with
$w_{K,T}:=|T(\BQ)\cap K_T|.$ Let
\begin{equation}
N:=L\cap U,\qquad S:=L\cap V_+
\end{equation}
be two definite lattices. For any $g\in S_{1+\frac{n}{2},\rho_L}$, we define the $L$-function by means of the convolution integral
\begin{equation}
L(g,U,s):=\langle \theta_S (\tau)\otimes E_N(\tau,s;1), g(\tau) \rangle_{\rm Pet},
\end{equation}
where $E_N(\tau,s;l)$ is a $S_L$-valued Eisenstein series of weight $l$ (cf.\,\cite[(2.16),\,p.\,641]{BY2} or [Appendix A.\,(A.12)]).
Then $L(g,U,s)$ has a meromorphic continuation to the whole complex plane because of the meromorphic continuation of $E_N(\tau,s;1)$ to $\BC$. We note that $L(g,U,0)=0$
because $E_N(\tau,s;l)$ is incoherent. When
$N \cong (\mathfrak A, -\frac{N}{N(\mathfrak A)})$ for a fractional ideal $\mathfrak A$ of
$k=\BQ(\sqrt D)$, we put
\begin{equation}
L^*(g,U,s):=\Lambda (\chi_D,s+1) L(g,U,s),
\end{equation}
where
\begin{equation}
\Lambda (\chi_D,s) :=|D|^{\frac{s}{2}}\Gamma_{\BR} (s+1) L(\chi_D,s),\qquad \Gamma_{\BR}(s):=\pi^{-\frac{s}{2}}\,\Gamma\left(\frac{s}{2}\right).
\end{equation}
Then we obtain the functional equation
\begin{equation}
L^*(g,U,s):=- L^*(g,U,-s).
\end{equation}
Under the assumption that $\Phi(z,h,f)$ is harmonic, i.e., $\Delta_z \Phi(z,h,f)=0$, Bruinier and Yang
\cite[Theorem 4.7, pp.\,654-655]{BY2} computed the value of $\Phi(z,h,f)$ at the CM cycle $Z(U)$\,:
\begin{equation}
\Phi (Z(U),f)=\deg (Z(U))\cdot \left( {\rm CT}(\langle f^+ (\tau), \theta_P (\tau)\otimes
{\mathscr E}_N (\tau) \rangle \right) + L'(\xi (f),U,0),
\end{equation}
where ${\rm CT}(\,\cdot\,)$ denotes the constant term of a Fourier series, $f^+ (\tau)$ is given by (6.5)
and ${\mathscr E}_N (\tau)$ is the holomorphic part of the derivative $E_N'(\tau,0;1)$ at the Eisenstein series associated to the lattice $N=L\cap U$ (cf.\,\cite[(2.26),\, p.\,643]{BY2} or [Appendix A.\,(A.20)]).

\begin{definition}
Let $\mathfrak X \lrt {\rm Spec}\, \BZ$ be an arithmetic variety of relative dimension $n$.
Let $Z^d (\mathfrak X)$ be the group of codimension $d$ cycles on $\mathfrak X$. If $F\subset \BC$
is a subfield, we put
\begin{equation*}
{\widehat {\rm CH}}^1(\mathfrak X)_F :={\widehat {\rm CH}}^1(\mathfrak X)\otimes_\BZ F.
\end{equation*}
Then there is a height pairing
\begin{equation*}
\langle \,\, ,\,\, \rangle_{\rm Fal} : {\widehat {\rm CH}}^1(\mathfrak X) \times Z^n (\mathfrak X)\lrt \BR
\end{equation*}
defined by
\begin{equation}
\langle {\widehat x}, y \rangle_{\rm Fal} : = \langle  x, y \rangle_{\rm fin} +
\langle {\widehat x}, y \rangle_{\infty},\quad {\widehat x}=(x,g_x)\in {\widehat {\rm CH}}^1(\mathfrak X),
\ y\in Z^n (\mathfrak X),
\end{equation}
where $\langle  x, y \rangle_{\rm fin}$ denotes the intersection pairing at the finite places and
\begin{equation*}
\langle {\widehat x}, y \rangle_{\infty}:=
\frac{1}{2}\, g_x (y(\BC)).
\end{equation*}
The quantity $\langle {\widehat x}, y \rangle_{\rm Fal}$ is called the {\sf Faltings\ height} of $y$ with
respect to ${\widehat x}$.
\end{definition}

Assume that there is a regular scheme ${\mathfrak X}_K \lrt {\rm Spec}\, \BZ$, projective and flat over $\BZ$, whose associated complex variety is a smooth compactification $X_K^*$
of $X_K$. Let $\mathscr Z (m,\mu)$ and $\mathscr Z (U)$ be suitable extensions to ${\mathfrak X}_K$
of $Z(m,\mu)$ and $Z(U)$ respectively. For any $f\in H(1-\frac{n}{2} ,{\overline \rho}_L )$, we set
\begin{equation}
\mathscr Z (f):= \sum_{\mu\in D(L)} \sum_{m>0} c^+ (-m,\mu)\,\mathscr Z (m,\mu)
\end{equation}
and
\begin{equation}
\widehat{\mathscr Z} (f):= (\mathscr Z (f), \Phi (\,\cdot \,,f)) \in {\widehat {\rm CH}}^1
(\mathfrak X_K)_\BC .
\end{equation}
We note that
\begin{equation}
\langle \widehat{\mathscr Z} (f), \mathscr Z (U) \rangle)_\infty = \frac{1}{2} \,
\Phi ( Z(U),f).
\end{equation}

\vskip 3mm
Bruinier and Yang \cite{BY2} formulated the following conjecture.
\vskip 3mm \noindent
{\bf Conjecture.}
For $f\in H(1-\frac{n}{2} ,{\overline \rho}_L )$, one has
\begin{equation}
\langle \widehat{\mathscr Z} (f), \mathscr Z (U) \rangle_{\rm Fal}= \frac{\deg (Z(U))}{2}
\left( c^+ (0,0)\,\kappa (0,0) +\,L'(\xi (f),U,0) \right),
\end{equation}
where $c^+ (0,0)$ is given by (6.5) and $\kappa (0,0)$ is the $(0,0)$-th coefficient of
\begin{equation*}
\mathcal E_N (\tau):= \sum_{\mu\in D(N)} \sum_{m\in \BQ} \kappa (m,\mu)\, q^m \,\phi_\mu,
\quad D(N):=N'/N.
\end{equation*}
We refer to the formula (A.19) in Appendix A for the precise definition of $\kappa (m,\mu).$

\vskip 3mm
Bruinier and Yang proved that the above conjecture holds for the case $n=0$
\cite[Theorem 6.5, p.\,663]{BY2} and also for the case $n=1$ \cite[Theorem 7.14, p.\,676]{BY2}.
The proof of the case $n=1$ gives another proof of the famous Gross-Zagier formula \cite{GZ}.
They also proved that the above conjecture holds for very special cases when $n=2.$

\begin{remark}
Bruinier, Howard and Yang \cite{BHY} studied special cycles on integral models of Shimura varieties associated to unitary similitude groups of signature $(n-1,1)$. They constructed an arithmetic theta lift from harmonic weak Maass forms of weight $2-n$ to the arithmetic Chow group
$\widehat{\rm CH}^1 ({\mathcal M}_{\mathbb L}^*)$ of the integral model of a unitary Shimura variety, and then they proved in loc.cit. that the height pairing of the arithmetic theta lift of
$f\in H_{2-n}(\omega_{\mathbb L})^{\Delta}$ against a CM cycle is equal to the central value of the derivative of the Rankin-Selberg convolution $L$-function of a cusp form $\xi (f)$ of weight $n$ and the theta function of a positive definite Hermitian lattice of rank $n-1$. When specified to the case $n=2$, this result can be viewed as a variant of the Gross-Zagier formula for Shimura curves associated to unitary groups of signature $(1,1)$.
\end{remark}

\end{section}

\vskip 10mm
%%%%%%%%%%%%%%%%%%%%%%%%%%%%%%%%%%%%%%%%%%%%%%%%%%%%%%%%%%%%%%%%%%%%%%%%%%%%%%%%%%%%%%%%%%%%%%%%%%%%%%%%%%%%%%%%%%%%%%%%%%%%%%%%%%%%%%%%%%%%%%%%%%%%%%%%
%%%%%%%%%%%%%%%%%%%%%%%%%%%%%%%%%%%%%%%%%%%%%%%%%%%%%%%%%%%%%%%%%%%%%%%%%%%%%%%%%%%%%%%%%%%%%%%%%%%%%%%%%%%%%%%%%%%%%%%%%%%%%%%%%%%%%%%%%%%%%%%%%%%%%%%%
%%%%%%%%
%%%%%%%%
%%%%%%%%            7. Derivatives of the Artin $L$-functions
%%%%%%%%
%%%%%%%%
%%%%%%%%%%%%%%%%%%%%%%%%%%%%%%%%%%%%%%%%%%%%%%%%%%%%%%%%%%%%%%%%%%%%%%%%%%%%%%%%%%%%%%%%%%%%%%%%%%%%%%%%%%%%%%%%%%%%%%%%%%%%%%%%%%%%%%%%%%%%%%%%%%%%%%%%
%%%%%%%%%%%%%%%%%%%%%%%%%%%%%%%%%%%%%%%%%%%%%%%%%%%%%%%%%%%%%%%%%%%%%%%%%%%%%%%%%%%%%%%%%%%%%%%%%%%%%%%%%%%%%%%%%%%%%%%%%%%%%%%%%%%%%%%%%%%%%%%%%%%%%%%%
%%%%%%%%%%%%%%%%%%%%%%%%%%%%%%%%%%%%%%%%%%%%%%%%%%%%%%%%%%%%%%%%%%%%%%%%%%%%%%%%%%%%%%%%%%%%%%%%%%%%%%%%%%%%%%%%%%%%%%%%%%%%%%%%%%%%%%%%%%%%%%%%%%%%%%%%
%%%%%%%%%%%%%%%%%%%%%%%%%%%%%%%%%%%%%%%%%%%%%%%%%%%%%%%%%%%%%%%%%%%%%%%%%%%%%%%%%%%%%%%%%%%%%%%%%%%%%%%%%%%%%%%%%%%%%%%%%%%%%%%%%%%%%%%%%%%%%%%%%%%%%%%%
\begin{section}{{\bf Derivatives of the Artin $L$-functions}}
\setcounter{equation}{0}
\vskip 3mm
In this section we describe the relations between the Faltings heights of CM abelian varieties and the derivatives of certain Artin $L$-functions. An interesting relation, so-called the {\sf averaged\ Colmez's\ formula} was proved recently by Andreatta, Goren, Howard and Madapushi-Pera (simply [AGHM]) \cite{AGHM2}, and independently by Yuan and Zhang (simply [YZ]) \cite{YZ}. Using the averaged Colmez's formula,
J. Tsimerman \cite{T} proved the Andr{\'e}-Oort conjecture for the Siegel modular variety $\mathcal A_g\ (g\geq 1)$.

\vskip 0.5cm \noindent
{\bf 7.1. Faltings heights}
\vskip 0.3cm
Let $A$ be an abelian variety of dimension $d$ over a number field $K$. Without loss of generality we may assume that $A$ has semistable reduction over $K$. Let $\mathcal A$ be the N{\'e}ron model of $A$ over the ring of integers $\mathcal O_K$.  Let
\begin{equation*}
  \omega_{\mathcal A}:=\pi_* \Omega^d_{\mathcal A / \mathcal O_K}
\end{equation*}
be the Hodge bundle of $A$ over ${\rm Spec}\,\mathcal O_K$, where
$\pi:\mathcal A \lrt {\rm Spec}\,\mathcal O_K$ is the structure morphism. Then there exists a canonical Hermitian metric $\|\,\, ,\,\|=\left\{ \|\,\,,\,\|_\sigma \right\}_\sigma$ on $\omega_{\mathcal A}$ given by
\begin{equation*}
\|\alpha\|^2_\sigma:=\frac{1}{2^d} \Big| \int_{A_\sigma (\BC)}\alpha\wedge {\overline \alpha}\Big|,
\quad A_\sigma (\BC):=\sigma (A)(\BC),
\end{equation*}
where $\sigma:K\lrt \BC$ is any embedding and
$\alpha\in \omega_{\mathcal A}\otimes_\sigma \BC = {\rm Hom} (A_\sigma(\BC), \Omega^d_{\mathcal A_\sigma (\BC)/ \BC})$ is any nonzero global holomorphic $d$-form on ${\mathcal A}_\sigma (\BC)$. Moreover we may assume that the class of $\omega_{\mathcal A}$ is trivial. So we may write
$\omega_{\mathcal A}=\mathcal O_K \alpha$ for some nonzero
$\alpha\in H^0({\rm Spec}\,\mathcal O_K,\mathcal O_K)=H^0(\mathcal A,\Omega^d_{\mathcal A / \mathcal O_K}).$ If $d=1,\ \alpha$ is the usual N{\'e}ron differential over an elliptic curve. The pair
$(\overline{\omega}_{\mathcal A},\|\,\,,\,\,\|)$ is a hermitian line bundle over ${\rm Spec}\,\mathcal O_K$.
The ${\sf Faltings\ height}$ of $A$ is defined to be
\begin{eqnarray}
% \nonumber % Remove numbering (before each equation)
  h_{\rm Fal}(A) &=& \frac{1}{[K:\BQ ]}\,\widehat{\deg} (\overline{\omega}_{\mathcal A}) \\
   &=& \frac{1}{2[ K:\BQ ]}\,\sum_{\sigma:K\lrt \BC}\log \left( \frac{1}{2^d}\,
   \Big| \int_{A_\sigma (\BC)}\alpha\wedge \overline{\alpha}\Big| \right),\nonumber
\end{eqnarray}
where $\widehat{\deg} (\overline{\omega}_{\mathcal A})$ denotes the arithmetic degree of
$\overline{\omega}_{\mathcal A}$\,\cite[p.\,67]{Sou}.

\vskip 0.5cm \noindent
{\bf 7.2. Colmez's conjecture}
\vskip 0.3cm
Let $E$ be a CM field of degree $2d$ with the maximal totally real subfield $F$ and let $c:E\lrt E$ be the conjugation. Let $A$ be an abelian variety over $\BC$ of dimension $d$ with complex multiplication by the maximal order $\mathcal O_E \subset E$ and having CM type $\Phi \subset {\rm Hom}(E,\BC)$. By the theory of complex multiplication, there is a number field $K$ such that $A$ is defined over $K$ and has a smooth projective integral model $\mathcal A$ over ${\rm Spec}\,\mathcal O_K$. P. Colmez \cite[Theorem II.\,2.10 (ii),\, p.\,665]{Col} proved that the Faltings height $h_{\rm Fal}(A)$ of $A$ depends only on the CM type $(E,\Phi)$, and not on $A$ itself. We denote it by
\begin{equation}
  h_{\rm Fal}(E,\Phi):=h_{\rm Fal}(A).
\end{equation}
Moreover, he formulated a conjectural formula for $h_{\rm Fal}(E,\Phi)$ in terms of the logarithmic derivatives at $s=0$ of certain Artin $L$-functions constructed in terms of the purely Galois-theoretic input $(E,\Phi)$ (cf. \cite[Conjecture II. 2.11,\,p.\,665]{Col}). The precise conjecture may be described as follows\,:
\vskip 3mm
\noindent
{\bf Colmez's\ conjecture.} {\it Let $E,\,\Phi,\,A,\cdots$ be as above. Let $E^{\rm nc}$ be the normal closure of $E$. Then we have the following identity\,:
\begin{eqnarray}
  h_{\rm Fal}(E,\Phi)&=&\sum_{\rho} c_{\rho,\Phi} \left( \frac{L'(0,\rho)}{L(0,\rho)} +
  \frac{\log f_\rho}{2} \right) \\
  &=& \sum_{\rho} c_{\rho,\Phi} \left( \frac{d}{ds}\Big|_{s=0} \log L(s,\rho) + \frac{\log f_\rho}{2} \right), \nonumber
\end{eqnarray}
where $\rho$ runs over irreducible complex representations of the Galois group ${\rm Gal}(E^{\rm nc}/E)$
for which $L(0,\rho)$ does not vanish, $L(s,\rho)$ is the Artin $L$-function determined by $\rho$, $c_{\rho,\Phi}$ are rational numbers depending only on the finite combinatorial data given by $\Phi$ and ${\rm Gal}(E^{\rm nc}/E)$, and $f_\rho$ is the Artin conductor of $\rho$.}

\vskip 5mm\noindent
\begin{remark}
Colmez's conjecture is still open in general. When $d=1,\ E$ is a quadratic imaginary field, and thus Colmez's conjecture is a form of the Chowla-Selberg formula. When $E/\BQ$ is an abelian extension, Colmez proved his conjecture in loc. cit., up to a rational multiple of $\log 2$. This extra error term was subsequently removed by A.\,Obus \cite{O}. When $d=2$, T.\,Yang \cite{Ya2} was able to prove Colmez's conjecture in many cases, including the first known cases of non-abelian extensions.
\end{remark}

\vskip 3mm
Recently, the {\sf averaged version\ of\ Colmez's\ conjecture} was proved by [AGHM], and independently by [YZ]. The averaged Colmez's conjecture  can be stated in the following way\,:

\begin{theorem}
The averaged Colmez's conjecture holds if we one averages over all CM types, up to a small error. Precisely,
\begin{equation}
  \frac{1}{2^d} \sum_{\Phi} h_{\rm Fal}(E,\Phi)=-\frac{1}{2}\cdot\frac{d}{ds}\Big|_{s=0} \log L(s,\chi)-\frac{1}{4}
  \log \Big| \frac{d_E}{d_F}\Big| - \frac{d}{2}\cdot\log (2\pi),
\end{equation}
where $\chi:\mathbb{A}_F^{\times}\lrt \{ \pm 1 \}$ is the quadratic Hecke character determined by the extension $E/F$, and $L(s,\chi)$ is the Artin $L$-function without the local factors at archimedian places.
The sum on the left hand side ranges over all $2^d$ CM types of $E$, and $d_E$\,(resp.\,$d_F$) is the discriminant of $E$\,(resp.\,$F$).
\end{theorem}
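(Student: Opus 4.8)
The plan is to reduce the averaged Colmez identity to an arithmetic intersection computation on a Shimura variety of orthogonal type, exactly in the spirit of the Bruinier--Yang conjecture recalled in Section 6 and its proof for small $n$. The starting observation is that for a CM field $E$ of degree $2d$ over $\mathbb Q$ with maximal totally real subfield $F$, the averaged Faltings height $\frac{1}{2^d}\sum_\Phi h_{\mathrm{Fal}}(E,\Phi)$ can be realized as (half of) a Faltings height pairing $\langle \widehat{\mathscr Z}(f),\mathscr Z(U)\rangle_{\mathrm{Fal}}$ on the integral model $\mathfrak X_K$ of a Shimura variety $X_K$ attached to the quadratic space $(V,Q)=( E, Q_E)$ of signature $(2d-2,2)$, where $U\subset V$ is a suitable negative definite $2$-plane giving rise to a CM $0$-cycle whose points are precisely the CM abelian varieties with CM by $\mathcal O_E$, summed over all CM types. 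Concretely, one chooses $f\in H(1-\tfrac n2,\overline\rho_L)$ so that $\xi(f)$ is (essentially) the normalized theta series attached to the quadratic character $\chi$ of $E/F$, and one invokes the Chowla--Selberg type comparison between $\widehat{\deg}$ of the Hodge bundle on the CM cycle and the Faltings height. The combinatorial input $c_{\rho,\Phi}$ collapses, after averaging over $\Phi$, precisely to the single character $\chi$ — this is the purely representation-theoretic simplification that makes the averaged statement cleaner than Colmez's original conjecture.

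With that geometric setup in hand, the first step is to apply the Bruinier--Yang formula for $\Phi(Z(U),f)$, i.e. equation $(6.28)$, under the harmonicity hypothesis $\Delta_z\Phi(z,h,f)=0$; this expresses the archimedean contribution $\langle\widehat{\mathscr Z}(f),\mathscr Z(U)\rangle_\infty=\tfrac12\Phi(Z(U),f)$ in terms of $\deg(Z(U))$ times a constant term of $\langle f^+,\theta_P\otimes\mathscr E_N\rangle$ plus the central derivative $L'(\xi(f),U,0)$. The second step is to identify $L'(\xi(f),U,0)$, via the convolution integral defining $L(g,U,s)$ in $(6.23)$ together with the functional equation $(6.27)$, with $-\tfrac12\,\frac{d}{ds}\big|_{s=0}\log L(s,\chi)$ up to the archimedean $\Gamma$-factors and the discriminant factor $|d_E/d_F|$ bundled into $\Lambda(\chi_D,s)$ in $(6.25)$–$(6.26)$; this is where the $-\tfrac14\log|d_E/d_F|$ and $-\tfrac d2\log(2\pi)$ terms in the statement originate, coming from $\Gamma_{\mathbb R}$ and the completed $L$-function. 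The third step is the global intersection bookkeeping: one must show the finite part $\langle \mathscr Z(f),\mathscr Z(U)\rangle_{\mathrm{fin}}$ together with the constant-term contribution $c^+(0,0)\kappa(0,0)$ assembles into the averaged Faltings height, using the arithmetic adjunction formula relating $\widehat{\mathscr Z}(f)$ to the metrized Hodge line bundle $\widehat\omega$ on $\mathfrak X_K$ restricted to the CM cycle.

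I expect the main obstacle to be precisely this last step — controlling the arithmetic of the integral model $\mathfrak X_K$ at the bad (in particular ramified and $p=2$) primes, and proving that the local intersection numbers at finite places match the local terms predicted by the derivative of $L(s,\chi)$. In the $n=0,1$ cases of the Bruinier--Yang conjecture these local computations are delicate but tractable; for general $d$ one cannot hope to compute each local intersection number individually, so the real content is a global argument that circumvents this. The device (used by Andreatta--Goren--Howard--Madapusi~Pera, and in a different guise by Yuan--Zhang) is to embed the CM abelian variety into a Kuga--Sato type family over the orthogonal Shimura variety and to deform, exploiting the modularity of the generating series of special divisors (Borcherds products, Theorem 5.1 and Theorem 5.2) to reduce the desired identity to a statement about a single explicit Eisenstein series whose derivative is computed globally; the harmonicity hypothesis on $\Phi(z,h,f)$ then has to be arranged, or removed, by a careful choice of input form $f$. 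A secondary difficulty is the ``small error'' in the statement: one must track, and show harmless, the contributions of the Hodge-bundle normalization constant (the analogue of the constant $C$ in $(2.17)$–$(2.18)$ and of $\kappa$ in $(2.20)$), which is exactly the kind of term that in the Kudla--Rapoport--Yang setting required the independent evaluation $\zeta_D'(-1)/\zeta_D(-1)$ style computation recorded in $(2.23)$.
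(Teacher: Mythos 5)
Your overall geometric strategy does match the proof of [AGHM] as sketched in Section 7: a GSpin Shimura variety attached to a $2d$-dimensional $\BQ$-quadratic space of signature $(2d-2,2)$ whose even Clifford algebra is $E$, a big CM $0$-cycle $\mathcal Y$, evaluation of the Green function at the CM points via Bruinier--Kudla--Yang, the Borcherds product $\Psi(f)$ to trade the special divisor $\widehat{\mathscr Z}(f)$ for the metrized tautological bundle $\widehat\omega^{\otimes c_f(0,0)}$, and careful bookkeeping at bad primes. The genuine gap is in your second step, i.e.\ the mechanism that is supposed to produce $-\tfrac12\,\tfrac{d}{ds}\big|_{s=0}\log L(s,\chi)$. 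You propose to choose $f$ so that $\xi(f)$ is ``essentially the theta series of $\chi$'' and then to identify $L'(\xi(f),U,0)$ with the logarithmic derivative of $L(s,\chi)$. This cannot work: $\xi(f)$ lies in the space of \emph{cusp} forms $S_{d,\overline\rho_L}$ (a theta series attached to the quadratic character is not available there), and more importantly the convolution $L$-function $L(g,U,s)=\langle\theta_S\otimes E_N(\tau,s;1),g\rangle_{\rm Pet}$ of (6.20) (or its Hilbert-modular analogue $\mathscr L(s,\xi(f))$ in (7.8)) is a Rankin--Selberg $L$-function attached to the cusp form $g$; it is not the quadratic Hecke $L$-function $L(s,\chi)$, and its central derivative does not compute $(\log L)'(0,\chi)$, so the $-\tfrac14\log|d_E/d_F|$ and $-\tfrac d2\log(2\pi)$ terms cannot be extracted from its archimedean factors as you claim.

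In the actual argument the roles are reversed: one chooses $f$ \emph{weakly holomorphic}, so that $\xi(f)=0$ and the term $\mathscr L'(0,\xi(f))$ in Theorem 7.2 vanishes; the Hecke $L$-derivative then enters through the constant term $a(0,0)$ of the derivative of the incoherent Hilbert Eisenstein series, via the identity $a(0,0)/\Lambda(0,\chi)=-2\,\Lambda'(0,\chi)/\Lambda(0,\chi)$ of (7.10) (equivalently, through the $\kappa(0,0)$-term of $\mathscr E_N$ in (6.28)--(6.29), not through $L'(\xi(f),U,0)$). The archimedean constants in the final formula come from unfolding the completed $\Lambda(s,\chi)$ via (7.6)--(7.7) and from the Borcherds-product normalization $\log(4\pi e^\gamma)$ combined with the comparisons (7.14)--(7.15) between $[\widehat\omega:\mathcal Y]$, $\widehat{\deg}_{\mathcal Y}(\widehat\omega_0)$ and the averaged Faltings height. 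Note also that the averaged Faltings height is tied to the arithmetic degree of the Hodge/tautological bundle along $\mathcal Y$, not directly to $\langle\widehat{\mathscr Z}(f),\mathscr Z(U)\rangle_{\rm Fal}$; the Borcherds step is what connects the two. Finally, your treatment of the ``small error'' conflates metric-normalization constants with the real issue: the argument only gives the identity up to $\sum_p b_E(p)\log p$ supported on $p\mid 2D_{{\rm bad},L}$, and a separate argument (varying the auxiliary data) is needed to show $b_E(p)=0$ for all $p$ before one obtains the exact formula (7.4).
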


\begin{remark}
The proof of the above theorem given by [AGHM] \cite{AGHM2} is quite diferent from that given by [YZ] \cite{YZ}. The proof of [AGHM] is based on the computation of arithmetic intersection numbers of high dimensional Shimura varieties of orthogonal type, and the idea of T. Yang \cite{Ya2} on the Clomez's conjecture for $d=2.$ On the other hand, the proof given by [YZ] \cite{YZ} is based on the work of
Yuan-Zhang-Zhang \cite{YZZ} on the generalized Gross-Zagier formula and the computation of intersection numbers over Shimura curves.
\end{remark}

\vskip 3mm
Here we will give a rough sketch of the main ideas of the proof given by
[AGHM]. Let $F$ be a totally real field of degree $d$, and a quadratic space $(\mathscr V, \mathscr Q)$
over $F$ of dimension $2$ and signature $((0,2),(2,0),\cdots,(2,0)).$ In other words, $\mathscr V$ is negative definite at one archimedian place and positive definite at the rest. The even Clifford algebra $E:={\rm Cl}^+ (\mathcal V)$ is a CM field of degree $2\,d$ with $F$ as its maximal totally real subfield.
We define the quadratic space
\begin{equation}
(V,Q):=(\mathscr V,{\rm Tr}_{F/\BQ}\circ \mathscr Q)
\end{equation}
over $\BQ$ of signature $(n,2)$ with $n=2\,(d-1).$ We put $H:={\rm GSpin}(V)$. Fix a maximal lattice $L\subset V.$ Let $L'$ be the dual lattice of $L$ with respect to the symmetric bilinear form
\begin{equation*}
  [x,y]:=Q(x,y)-Q(x)-Q(y).
\end{equation*}
The associated Hermitian space
\begin{equation*}
\mathbb{D}=\left\{ z\in V_\BC\,|\ \,[z,z]=0,\ \, [z,\overline{z}] < 0 \right\}/\BC^{\times}.
\end{equation*}
Let $K$ be a compact open subgroup of $H(\mathbb{A}_f).$ The GSpin Shimura variety
\begin{equation*}
M(\BC):=H(\BQ)\backslash (\mathbb{D}\times H(\mathbb{A}_f )/K
\end{equation*}
is the space of complex points of a smooth algebraic stack $M$ over $\BQ$.
Let $T$ be the torus over $\BQ$ with points
\begin{equation*}
T(\BQ)=E^{\times}/{\rm ker}\,({\rm Nm}: F^{\times}\lrt \BQ^{\times}).
\end{equation*}
The relation (7.5) induces a morphism $T\lrt H$ so that one can construct a zero dimensional Shimura variety $Y$ over $E$, together with a morphism $Y\lrt M$ of $\BQ$-stacks. The image of this morphism consists of special points in the sense of Deligne and are the ${\sf big\ CM\ points}$ of \cite{BKY}.
[AGHM] define an integral model ${\mathcal Y}$ of $Y$, regular and flat over ${\mathcal O}_E$ along with a morphism $\mathcal Y \lrt \mathcal M$ of $\BZ$-stacks. The composition of $\widehat{\rm deg}$ and the pullback
\begin{equation*}
\widehat{\rm Pic}(\mathcal M) \lrt \widehat{\rm Pic}(\mathcal Y)
\stackrel{\widehat{\deg}}{\lrt} \BR
\end{equation*}
defines a linear functional,called the ${\sf arithmetic\ degree\ along}\ \mathcal Y$ denoted by
\begin{equation*}
\widehat{\mathscr Z} \longmapsto  [\widehat{\mathscr Z}:\mathcal Y ].
\end{equation*}

The completed $L$-function
\begin{equation}
\Lambda (s,\chi):=\Big| \frac{d_E}{d_F} \Big|^{s/2} \Gamma_{\BR}(s+2)^d \,L(s,\chi)
\end{equation}
satisfies the functional equation $\Lambda (1-s,\chi)=\Lambda (s,\chi)$ and also
\begin{equation}
\frac{\Lambda' (0,\chi)}{\Lambda (0,\chi)}=\frac{L' (0,\chi)}{L (0,\chi)}
+ \frac{1}{2}\,\log \Big| \frac{d_E}{d_F}\Big|  - \frac{d}{2}\, \log (4\pi e^\gamma).
\end{equation}
Here $\gamma$ denotes the Euler constant and $\Gamma_{\BR}(s)$ is defined by (6.21).

\vskip 3mm
For a fixed Schwartz function $\varphi\in S(\mathscr{V})$, we let $E(g,s,\varphi)$ be the corresponding
{\sf incoherent} weight $1$ Eisenstein series on $SL_2(\mathbb{A}_F)$. For any
$\vec{\tau}=(\tau_1,\cdots,\tau_d)\in \mathbb{H}^d,$
let $g_{\vec{\tau}}\in SL_2(\mathbb{A}_F)$ be the matrix with archimedean components
\begin{equation*}
g_{\tau_j}=
\begin{pmatrix}
  1 & x_j \\
  0 & 1
\end{pmatrix}
\begin{pmatrix}
  y_j^{1/2} & 0 \\
  0 & y_j^{-1/2}
\end{pmatrix}, \quad \tau_j=x_j+i y_j\in \mathbb{H}\ (1\leq j\leq d)
\end{equation*}
and take all finite components to be the identity matrix. The Hilbert modular Eisenstein series of
weight $1$
\begin{equation*}
E(\vec{\tau},s,\varphi):=N(\vec{y})^{-1/2}\cdot E(g_{\vec{\tau}},s,\varphi),\quad
N(\vec{y}):=y_1\cdots y_d
\end{equation*}
was defined in \cite[(4.4)]{BKY}.
\vskip 3mm
Let $f\in H(2-d,\rho_L)$ with {\sf integral} principal part. Then $\xi (f)\in S_{d,\overline{\rho}_L}$
has a decomposition $\xi(f)=\sum_{\mu\in D(L)} \xi(f)_\mu (\tau)\,\varphi_\mu.$ Here $\xi$ denotes the Bruinier-Funke differential operator defined by (6.9).
Bruinier-Kudla-Yang \cite[(5.3)]{BKY} defined the following $L$-function
\begin{equation}
\mathscr{L}(s,\xi(f)):=\Lambda(s+1,\chi)\,\int_{SL_2(\BZ)\backslash \mathbb{H}} \sum_{\mu\in D(L)}
\overline{\xi(f)_\mu (\tau)}\,E(\tau,s,\varphi_\mu)\,y^d\,\frac{dx\,dy}{y^2}.
\end{equation}
Here $ E(\tau,s,\varphi_\mu)$ is the restriction of $E(\vec{\tau},s,\varphi_\mu)$ to the diagonal embedding
$\BH \hookrightarrow \BH^d$. Then $\mathscr{L}(s,\xi(f))$ is entire, satisfies the functional equation and $\mathscr{L}(0,\xi(f))=0.$

\vskip 3mm
[AGHM] \cite[Theorem 6.4.2]{AGHM2} proved the following\,:
\begin{theorem}
Suppose $f\in H(1-\frac{n}{2},\rho_L)$ is a harmonic weak Maass form with {\sf integral} principal part
of weight $1-\frac{n}{2}$ with respect to $Mp(2,\BZ)$ and $\rho_L$. Let $\widehat{\mathscr Z}(f)$ be the arithmetic divisor on $\mathcal M$ (cf.\,(6.26)). Then one obtains the equality
\begin{equation}
\Lambda (0,\chi)\, [\widehat{\mathscr Z}(f):\mathcal Y ]= \deg_\BC (Y)
\left\{ a(0,0)\cdot c_f^+ (0,0)- \mathscr{L}' (0,\xi (f)) \right\}
\end{equation}
holds up to a $\BQ$-linear combination of $\{ \log (p)\,\vert\ p | D_{{\rm bad},L} \}$. Here
$D_{{\rm bad},L}$ is the product of certain bad primes, depending on the lattice $L\subset V\subset \mathscr{V}$,
\begin{equation*}
\deg_\BC (Y):=\sum_{y\in Y(\BC)} \frac{1}{ \vert {\rm Aut}(y) \vert }
\end{equation*}
is the number of $\BC$-points of the $E$-stack $Y$ counted with multiplicites,
$\Lambda (s,\chi)$ is the completed $L$-function, the constant $a(0,0)$ is the derivative at $s=0$ of the constant term of $E(\vec{\tau},s)$ and $c^+_f (0,0)$ denotes the $(0,0)$-th coefficient of the holomorphic part $f^+$ of $f$ (cf.\,(6.5)).
\end{theorem}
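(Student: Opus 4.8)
The plan is to split the arithmetic degree $[\widehat{\mathscr Z}(f):\mathcal Y]$ along $\mathcal Y$ into its non-archimedean and archimedean parts, as in Definition 6.1, and to evaluate the two pieces by independent techniques. Writing $\widehat{\mathscr Z}(f)=(\mathscr Z(f),\Phi(\,\cdot\,,f))$ as in (6.26), so that $\mathscr Z(f)=\sum_{\mu}\sum_{m>0}c_f^+(-m,\mu)\,\mathscr Z(m,\mu)$ and the regularized theta lift $\Phi(\,\cdot\,,f)$ of (6.17) serves as an Arakelov--Green current for $\mathscr Z(f)$ in the normalization of (R1)--(R4), the analogue of (6.33) gives
\[
\Lambda(0,\chi)\,[\widehat{\mathscr Z}(f):\mathcal Y]=\Lambda(0,\chi)\,[\mathscr Z(f):\mathcal Y]_{\rm fin}+\tfrac12\,\Lambda(0,\chi)\,\Phi(Y(\BC),f),
\]
where $Y(\BC)$ is the big CM $0$-cycle of the complex uniformization of $\mathcal Y$, i.e.\ the analogue of the cycle $Z(U)$ of Section 6.

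\emph{The archimedean term.} I would evaluate $\Phi(Y(\BC),f)$ using the big CM value formula of Bruinier, Kudla and Yang \cite{BKY}, whose proof rests on Schofer's unfolding \cite{Sch} of the regularized theta integral. That unfolding is an instance of the Siegel--Weil formula: it identifies $\Phi$, restricted to the big CM cycle, with a regularized Petersson product of $f$ against the incoherent Hilbert Eisenstein series $E(\vec\tau,s,\varphi)$ of (7.9). Since this Eisenstein series is incoherent it vanishes identically at $s=0$, so only its derivative survives, and in the normalization of \cite{BKY} one obtains
\[
\tfrac12\,\Lambda(0,\chi)\,\Phi(Y(\BC),f)=\deg_\BC(Y)(a(0,0)\,c_f^+(0,0)-\mathscr{L}'_{\infty}(0,\xi(f))),
\]
where $a(0,0)$ is the $s$-derivative at $0$ of the constant term of $E(\vec\tau,s,\varphi)$, the product $a(0,0)\,c_f^+(0,0)$ coming from the $(0,0)$-part of the constant term ${\rm CT}\langle f^+,\theta_P\otimes\mathscr E_N\rangle$ of (6.27), and $\mathscr{L}'_{\infty}$ denotes the contribution of the archimedean derived Whittaker coefficient to $\mathscr{L}'(0,\xi(f))$. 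This step uses no information about the integral model $\mathcal M$.

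\emph{The non-archimedean term.} Because the principal part of $f$ is a Fourier polynomial, $[\mathscr Z(f):\mathcal Y]_{\rm fin}$ is the finite sum $\sum_{\mu}\sum_{m>0}c_f^+(-m,\mu)\,[\mathscr Z(m,\mu):\mathcal Y]_{\rm fin}$. For each $m>0$ one shows that $\mathscr Z(m,\mu)$ meets $\mathcal Y$ in a $0$-dimensional subscheme supported above finitely many primes of $\mathcal O_E$, and that in a formal neighbourhood of such a point the special divisor is cut out by the vanishing of a special quasi-endomorphism of the attached $p$-divisible group. Grothendieck--Messing deformation theory (together with, at the supersingular points, the computation of local intersection multiplicities on Rapoport--Zink spaces in the style of Kudla--Rapoport \cite{KuRY} and of Gross's quasi-canonical liftings) then evaluates the local length at a prime $\mathfrak p$ as a local representation density, that is, as a non-archimedean derived Whittaker coefficient of the same incoherent Eisenstein series; this is the local ``arithmetic Siegel--Weil'' identity. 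Summing over $\mathfrak p$ and over $(m,\mu)$ yields
\[
\Lambda(0,\chi)\,[\mathscr Z(f):\mathcal Y]_{\rm fin}=-\deg_\BC(Y)\,\mathscr{L}'_{\rm fin}(0,\xi(f))
\]
up to a $\BQ$-linear combination of $\{\log p\mid p\mid D_{{\rm bad},L}\}$, where $\mathscr{L}'_{\rm fin}$ collects the finite derived Whittaker coefficients.

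\emph{Conclusion and the main obstacle.} Since $E(\vec\tau,0,\varphi)\equiv 0$, every nonzero Fourier coefficient of the derivative $E'(\vec\tau,0,\varphi)$ decomposes as a sum over places of a single derived local Whittaker factor times the product of the remaining (nonvanishing) ones, whence $\mathscr{L}'(0,\xi(f))=\mathscr{L}'_{\infty}(0,\xi(f))+\mathscr{L}'_{\rm fin}(0,\xi(f))$. Adding the two displays above gives
\[
\Lambda(0,\chi)\,[\widehat{\mathscr Z}(f):\mathcal Y]=\deg_\BC(Y)(a(0,0)\,c_f^+(0,0)-\mathscr{L}'(0,\xi(f)))
\]
modulo a $\BQ$-linear combination of $\{\log p\mid p\mid D_{{\rm bad},L}\}$, as claimed; the modularity in $\widehat{\rm CH}^1(\mathcal M)_{\BQ}$ of the arithmetic generating series $\sum_{m,\mu}\widehat{\mathscr Z}(m,\mu)\,q^m\phi_\mu$ (Howard--Madapusi Pera) may be used to organize the coefficientwise comparisons. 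The hard part will be the non-archimedean step at the primes dividing $D_{{\rm bad},L}$: there neither $\mathcal M$ nor the maximal lattice $L$ is regular enough for the deformation-theoretic length to be matched exactly to the predicted local density, which forces the $\log p$ error term and is the reason the identity is only established up to it. A subordinate technical point is reconciling the normalization of the Green function in (R1)--(R4) with that of \cite{BKY}, which is precisely what isolates the clean constant $a(0,0)\,c_f^+(0,0)$.
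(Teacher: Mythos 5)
The paper itself contains no proof of Theorem 7.2: it is quoted from \cite[Theorem 6.4.2]{AGHM2}, and the only indication given is that the argument rests on two ingredients, namely the Bruinier--Kudla--Yang evaluation \cite{BKY} of the Green function $\Phi(f)$ at the points of $\mathcal Y$, and a computation of the finite intersection multiplicities carried out with Breuil--Kisin theory. Your decomposition of $[\widehat{\mathscr Z}(f):\mathcal Y]$ into a finite part and an archimedean part, and your treatment of the archimedean part via Schofer's unfolding \cite{Sch} and the incoherent Hilbert Eisenstein series $E(\vec{\tau},s,\varphi)$ of \cite{BKY}, reproduce the first half of that blueprint faithfully, including the mechanism by which the vanishing $E(\vec{\tau},0,\varphi)\equiv 0$ isolates $a(0,0)\,c_f^+(0,0)$ together with the archimedean derived Whittaker contribution to $\mathscr L'(0,\xi(f))$, and the place-by-place decomposition of the Fourier coefficients of the derivative at $s=0$.

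The genuine divergence, and the gap, is in the non-archimedean half. You propose Grothendieck--Messing deformation theory, Rapoport--Zink space computations in the style of \cite{KuRY}, and Gross's quasi-canonical lifts. That is the technology of the Shimura curve and low-dimensional unitary settings; on the Kisin--Madapusi Pera integral model of the GSpin Shimura variety used in \cite{AGHM2}, the special divisors are cut out by special quasi-endomorphisms of the Kuga--Satake abelian scheme, and controlling their deformations -- hence the local lengths above every finite prime of $\mathcal O_E$, including the ramified primes and $p=2$ -- requires the Breuil--Kisin module formalism, precisely because Grothendieck--Messing theory by itself does not see the crystalline Tate tensors that define the GSpin structure. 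This is where the hard content of \cite{AGHM2} lies and where the ambiguity by a $\BQ$-linear combination of $\log p$ with $p\mid D_{{\rm bad},L}$ actually originates; your sketch asserts the identity between local lengths and derived local Whittaker coefficients (the local ``arithmetic Siegel--Weil'' step) without an argument valid in this generality, so the decisive step remains unproved. The appeal to modularity of the arithmetic generating series is not needed for this theorem and can be dropped; the reconciliation of Green-function normalizations that you flag at the end is indeed required but is the routine part.
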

Also [AGHM] showed the identity
\begin{equation}
\frac{a(0,0)}{\Lambda(0,\chi)}=-2\cdot \frac{\Lambda'(0,\chi)}{\Lambda(0,\chi)}
\end{equation}
up to a $\BQ$-linear combination of $\{ \log (p)\,\vert\ p | D_{{\rm bad},L} \}$
(cf.\,\cite[Proposition 7.8.2]{AGHM2}). The proof of Theorem 7.2 is based on the
Bruinier-Kudla-Yang calculation \cite{BKY} of the values of the Green function $\Phi (f)$ at the points of
$\mathcal Y$ and the computation of the finite intersection multiplicities to the arithmetic intersection
$[\widehat{\mathscr Z}(f):\mathcal Y ]$ using the Breuil-Kisin theory.

\vskip 3mm
Assume $f\in M_{1-\frac{n}{2}}^!$ is weakly holomorphic so that $\xi (f)=0$ by the exact sequence (6.10).
Then $f$ has the Fourier expansion
\begin{equation}
f (\tau):=\sum_{\mu\in D(L)}  \sum_{ m\in\BQ,\,  m \gg -\infty }
c (m,\mu) \,q^n \,\phi_\mu, \quad \tau\in \mathbb H.
\end{equation}
It follows from Theorem 7.2 and the formula (7.10) that
\begin{equation}
\frac{[\widehat{\mathscr Z}(f):\mathcal Y ]}{\deg_\BC (Y)}\, \approx_L \, 2\,c_f^+ (0,0)\cdot
\frac{\Lambda'(0,\chi)}{\Lambda(0,\chi)},
\end{equation}
where $\approx_L$ means the equality up to a $\BQ$-linear combination of
$\{ \log (p)\,\vert\ p | D_{{\rm bad},L} \}$. The integral model $\mathcal M$ carries over it the tautological bundle $\omega$. Any $h\in H(\mathbb A_f)$ determines a uniformization
\begin{equation*}
{\bf D} \lrt M(\BC),\qquad z\longmapsto [(z,h)]
\end{equation*}
of a connected component of the complex fibre of $\mathcal M$, and provides the tautological bundle
on ${\bf D}$ pulling back $\omega$. If we endow $\omega$ with the metric $\| z \|^2=-(z,\overline{z})_Q$,
we obtain
\begin{equation*}
\widehat{\omega}=(\omega,\|\,\, ,\,\,\| )\in \widehat{\rm Pic}(\mathcal M).
\end{equation*}
After possibly replacing $f$ by a positive integer multiple, the theory of Borcherds product gives us
a rational section $\Psi (f)$ of the line bundle $\omega^{\otimes c_f (0,0)}$ such that
\begin{equation*}
-\log \| \Psi(f) \|^2 =\Phi (f)-c_f (0,0)\,\log (4\pi e^{\gamma}),\quad (\gamma\ is \ the \ Euler\ constant)
\end{equation*}
and satisfying the identity
\begin{equation*}
{\rm div} (\Psi (f))= \mathscr{Z} (f)
\end{equation*}
up to a linear combination of irreducible components of the special fibre $\mathcal{M}_{\mathbb F_2}.$

\vskip 2mm
We define a Cartier divisor
\begin{equation*}
\mathcal E_2 (f):={\rm div} (\Psi (f))- \mathscr{Z} (f)
\end{equation*}
on $\mathcal M$ supported entirely in characteristic $2$. If we endow $\mathcal E_2 (f)$ with the trivial Green function, we obtain a metrized line bundle
$\widehat{\mathcal E}_2 (f)\in \widehat{\rm Pic} (\mathcal M)$
satisfying the equality
\begin{equation*}
[ \widehat{\omega}^{\otimes c_f (0,0)}:\mathcal{Y}]=[\widehat{\mathscr Z}(f):\mathcal Y ]
-c_f (0,0)\,\log (4\pi e^{\gamma})\cdot d \,\deg_\BC (Y) + [\widehat{\mathcal E}_2 (f):\mathcal Y ].
\end{equation*}
We choose $f$ such that $c_f (0,0)\neq 0,$ then according to (7.9), we obtain
\begin{equation}
\frac{[\widehat{\omega}:\mathcal Y ]}{\deg_\BC (Y)}+d\cdot \log (4\pi e^{\gamma}) \, \approx_L \,
\frac{1}{c_f (0,0)}\cdot \frac{[\widehat{\mathcal E}_2 (f):\mathcal Y ]}{\deg_\BC (Y)}-
2\cdot \frac{\Lambda'(0,\chi)}{\Lambda(0,\chi)}.
\end{equation}
The cycle $\mathcal Y$ carries a canonical line bundle $\widehat{\omega}_0.$ [AGHM] showed (7.14) and (7.15)\,:
\begin{equation}
  \frac{1}{2^{d-2}} \sum_{\Phi} h_{\rm Fal}(E,\Phi)=
  \frac{\widehat{\deg}_{\mathcal Y} (\widehat{\omega}_0)}{\deg_\BC (Y)} + \log | d_F | -2\,d\cdot \log (2\pi)
\end{equation}
and
\begin{equation}
\frac{[\widehat{\omega}:\mathcal Y ]}{\deg_\BC (Y)} \,\approx_L \,
\frac{\widehat{\deg}_{\mathcal Y} (\widehat{\omega}_0)}{\deg_\BC (Y)} + \log | d_F |,
\end{equation}
where the sum on the left hand side in (7.14) runs over all $2^d$ CM types of $E$. Putting all these together, one finds that
\begin{equation}
  \frac{1}{2^d} \sum_{\Phi} h_{\rm Fal}(E,\Phi)=-\frac{1}{2}\cdot\frac{L'(0,\chi)}{L(0,\chi)}-\frac{1}{4}
  \log \Big| \frac{d_E}{d_F}\Big| - \frac{d}{2}\cdot\log (2\pi) +\sum_p b_E (p)\log (p)
\end{equation}
for some rational numbers $b_E (p)$ with $b_E (p)=0$ for all primes $p \nmid 2\,D_{{\rm bad},L}$. In fact,
$b_E (p)=0$ for all primes $p$. Finally one obtains the averaged Clomez's formula (7.4).

\begin{remark}
The main ideas of the proof of [YZ] \cite{YZ} are provided by two steps. The first step is to describe the averaged Faltings height in terms of the height of a single CM point on a quaternionic Shimura curve, and the second step is to prove that the height of a single CM point is given by the expected logarithmic derivative. Their proof is essentially based on the generalized Gross-Zagier formula proved by Yuan-Zhang-Zhang \cite{YZZ}.
\end{remark}

\end{section}

\vskip 10mm
%%%%%%%%%%%%%%%%%%%%%%%%%%%%%%%%%%%%%%%%%%%%%%%%%%%%%%%%%%%%%%%%%%%%%%%%%%%%%%%%%%%%%%%%%%%%%%%%%%%%%%%%%%%%%%%%%%%%%%%%%%%%%%%%%%%%%%%%%%%%%%%%%%%%%%%%
%%%%%%%%%%%%%%%%%%%%%%%%%%%%%%%%%%%%%%%%%%%%%%%%%%%%%%%%%%%%%%%%%%%%%%%%%%%%%%%%%%%%%%%%%%%%%%%%%%%%%%%%%%%%%%%%%%%%%%%%%%%%%%%%%%%%%%%%%%%%%%%%%%%%%%%%
%%%%%%%%
%%%%%%%%
%%%%%%%%            8. Final Remarks
%%%%%%%%
%%%%%%%%
%%%%%%%%%%%%%%%%%%%%%%%%%%%%%%%%%%%%%%%%%%%%%%%%%%%%%%%%%%%%%%%%%%%%%%%%%%%%%%%%%%%%%%%%%%%%%%%%%%%%%%%%%%%%%%%%%%%%%%%%%%%%%%%%%%%%%%%%%%%%%%%%%%%%%%%%
%%%%%%%%%%%%%%%%%%%%%%%%%%%%%%%%%%%%%%%%%%%%%%%%%%%%%%%%%%%%%%%%%%%%%%%%%%%%%%%%%%%%%%%%%%%%%%%%%%%%%%%%%%%%%%%%%%%%%%%%%%%%%%%%%%%%%%%%%%%%%%%%%%%%%%%%
%%%%%%%%%%%%%%%%%%%%%%%%%%%%%%%%%%%%%%%%%%%%%%%%%%%%%%%%%%%%%%%%%%%%%%%%%%%%%%%%%%%%%%%%%%%%%%%%%%%%%%%%%%%%%%%%%%%%%%%%%%%%%%%%%%%%%%%%%%%%%%%%%%%%%%%%
%%%%%%%%%%%%%%%%%%%%%%%%%%%%%%%%%%%%%%%%%%%%%%%%%%%%%%%%%%%%%%%%%%%%%%%%%%%%%%%%%%%%%%%%%%%%%%%%%%%%%%%%%%%%%%%%%%%%%%%%%%%%%%%%%%%%%%%%%%%%%%%%%%%%%%%%
\begin{section}{{\bf Final remarks}}
\setcounter{equation}{0}
\vskip 3mm
In the final section we give some remarks and open problems.
\vskip 2mm
We assume that the Birch-Swinnerton-Dyer conjecture (3.29) in section 3 holds. We take $K=\BQ$. Let $r:=r_E$ be the rank of an elliptic curve $E$ with $r\geq 2$. We assume that the Shafarevich-Tate group ${\rm III}(E)$ is finite. Then
\begin{equation}
L(E,s)={ {M\,|{\rm III}(E)|\,R_E}\over {|E(\BQ)_{\rm tor}|^2} }\,(s-1)^r\,+\,c_{r+1}(s-1)^{r+1}+\cdots ,
\end{equation}
where $R_E$ is the elliptic regulator of $E$ and $M=\prod_{v\in S_E} m_v$ is an explicitly written product of local Tamagawa factors over the set $S_E$ of all archimedean places of $\BQ$ and places where $E$ has bad reduction and $m_v=\int_{E(\BQ_v)}\omega,\ \omega$ being the N{\'e}ron differential of $E$.
We denote by $L^{(\ell)}(E,s)$ the $\ell$-th derivative of $L(E,s)$ with respect to $s$. It follows from (8.1) that
\begin{equation}
L^{(r)}(E,1)=r!\,{ {M\,|{\rm III}(E)|\,R_E}\over {|E(\BQ)_{\rm tor}|^2}} \neq 0.\end{equation}

\vskip 3mm \noindent
{\bf Problem 1.} Let $r,\, E,\, c_{r+d}\,(d=1,2.\cdots)$ with $r\geq 2$ as above. Compute the coefficients
$c_{r+1},\,c_{r+2},\cdots.$

\vskip 3mm
The Gross-Zagier formula relates the N{\'e}ron-tate heights of Heegner points on the modular curves to the central values of the first derivatives of certain Rankin-Selberg $L$-functions. Some generalizations of the Gross-Zagier formula was made by Zhang \cite{Zh1, Zh2}. Thereafter Yuan-Zhang-Zhang \cite{YZZ} proved a complete Gross-Zagier formula on quaternionic Shimura curves on totally real fields. We propose the following problem.

\vskip 3mm \noindent
{\bf Problem 2.} Let $\ell$ be a positive integer with $\ell \geq 2$. Discover an analogue of the
Gross-Zagier formula relating the central values of the $\ell$-th derivatives of certain (Rankin-Selberg) $L$-functions to geometric-arithmetic quantities.

\vskip 3mm
In section 6 we learned the relations between the Faltings height pairing of arithmetic special divisors and CM cycles on Shimura varieties associated to orthogonal groups of signature $(n,2)$, and central values of the first derivatives of the Rankin $L$-functions associated with the above Shimura varieties. In section 7 we learned the relations between the arithmetic intersection multiplicities of special divisors and big CM points on the Shimura varieties and the central values of the first derivatives of certain $L$-functions.
\vskip 2mm
We propose the following natural problem.
\vskip 3mm \noindent
{\bf Problem 3.} Let $\ell$ be a positive integer with $\ell \geq 2$. Study the relations between central values of the $\ell$-th derivatives of certain $L$-functions and geometric-arithmetic properties of the associated Shimura varieties.

\end{section}

%\vskip 12mm

%%%%%%%%%%%%%%%%%%%%%%%%%%%%%%%%%%%%%%%%%%%%%%%%%%%%%%%%%%%%%%%%%%%%%%%%%%%%%%%%%%%%%%%%%%%%%%%%%%%%%%%%%%%%%%%%%%%%%%%%%%%%%%%%%%%%%%%%%%%%%%%%%%%%%%%%
%%%%%%%%%%%%%%%%%%%%%%%%%%%%%%%%%%%%%%%%%%%%%%%%%%%%%%%%%%%%%%%%%%%%%%%%%%%%%%%%%%%%%%%%%%%%%%%%%%%%%%%%%%%%%%%%%%%%%%%%%%%%%%%%%%%%%%%%%%%%%%%%%%%%%%%%
%%%%%%%%%%%%%%%%%%%%%%%%%%%%%%%%%%%%%%%%%%%%%%%%%%%%%%%%%%%%%%%%%%%%%%%%%%%%%%%%%%%%%%%%%%%%%%%%%%%%%%%%%%%%%%%%%%%%%%%%%%%%%%%%%%%%%%%%%%%%%%%%%%%%%%%%
%%%%%%%%%%%%%%%%%%%%%%%%%%%%%%%%%%%%%%%%%%%%%%%%%%%%%%%%%%%%%%%%%%%%%%%%%%%%%%%%%%%%%%%%%%%%%%%%%%%%%%%%%%%%%%%%%%%%%%%%%%%%%%%%%%%%%%%%%%%%%%%%%%%%%%%%
%%%%%%%%
%%%%%%%%
%%%%%%%%                   Appendix A. The $L$-function $L(F,U,s)$
%%%%%%%%
%%%%%%%%
%%%%%%%%%%%%%%%%%%%%%%%%%%%%%%%%%%%%%%%%%%%%%%%%%%%%%%%%%%%%%%%%%%%%%%%%%%%%%%%%%%%%%%%%%%%%%%%%%%%%%%%%%%%%%%%%%%%%%%%%%%%%%%%%%%%%%%%%%%%%%%%%%%%%%%%%
%%%%%%%%%%%%%%%%%%%%%%%%%%%%%%%%%%%%%%%%%%%%%%%%%%%%%%%%%%%%%%%%%%%%%%%%%%%%%%%%%%%%%%%%%%%%%%%%%%%%%%%%%%%%%%%%%%%%%%%%%%%%%%%%%%%%%%%%%%%%%%%%%%%%%%%%
%%%%%%%%%%%%%%%%%%%%%%%%%%%%%%%%%%%%%%%%%%%%%%%%%%%%%%%%%%%%%%%%%%%%%%%%%%%%%%%%%%%%%%%%%%%%%%%%%%%%%%%%%%%%%%%%%%%%%%%%%%%%%%%%%%%%%%%%%%%%%%%%%%%%%%%%
%%%%%%%%%%%%%%%%%%%%%%%%%%%%%%%%%%%%%%%%%%%%%%%%%%%%%%%%%%%%%%%%%%%%%%%%%%%%%%%%%%%%%%%%%%%%%%%%%%%%%%%%%%%%%%%%%%%%%%%%%%%%%%%%%%%%%%%%%%%%%%%%%%%%%%%%
\newpage
\newtheorem*{theorem*}{Theorem}
\newtheorem*{remark*}{Remark}
\appendix
{\bf \large Appendix\ A. The $L$-function $L(F,U,s)$}
%\section*{{\LARGE{\bfseries Appendix A. The L-function $L(F,U,s)$}} \\}
\renewcommand{\theequation}{A.\arabic{equation}}
\vskip 3mm
\par We follows the notations in section 6. This note is based on \cite{BY2}. Let $(V,Q)$ be a quadratic space over $\mathbb{Q}$ of signature $(n,2)$.
Let $H:=\text{GSpin}(V)$ and $G:=\text{SL}_2$. We write $G^{\prime}_\mathbb{A}$ for the 2-fold metaplectic cover of $G_\mathbb{A}$. Let $G^{\prime}_\mathbb{R}$, $K^{\prime}_\infty$ and $K^{\prime}$ be the inverse images in $G^{\prime}_\mathbb{A}$ of $G(\mathbb{R})$, $K_\infty=\text{SO}(2,\mathbb{R})$
and $K=\text{SL}(2,\hat{\mathbb{Z}})\subset G(\mathbb{A}_f)$ respectively. we write $G'_\mathbb{Q}$ for the image in $G^{\prime}_\mathbb{A}$ of $G(\mathbb{Q})$ under the canonical splitting. Then we have
\begin{equation} \label{appeq01}
G_{\mathbb{A}}^{\prime}=G_{\mathbb{Q}}^{\prime} G_{\mathbb{R}}^{\prime} K^{\prime}\quad \text{ and }\quad \Gamma:=SL(2,\mathbb{Z})\cong G_{\mathbb{Q}}^{\prime} \cap G_{\mathbb{R}}^{\prime}K^{\prime}.
\end{equation}
We put $\Gamma^\prime=\text{Mp}(2,\mathbb{Z})$, i.e., the inverse image of $\Gamma$ in $G_{\mathbb{R}}^{\prime}$. Then for every $\gamma^\prime\in \Gamma^\prime$, there are unique elements $\gamma_0\in\Gamma$ and $\gamma^{\prime\prime}\in K$ such that
\begin{equation} \label{appeq02}
\gamma_{0}=\gamma^{\prime} \gamma^{\prime \prime}.
\end{equation}
Let $\psi$ be the standard non-trivial additive character of $\mathbb{A}/\mathbb{Q}$. Then $G_{\mathbb{A}}^{\prime}$ and $H(\mathbb{A})$ act on the space $S(V(\mathbb{A}))$ of Schwartz-Bruhat functions on $V(\mathbb{A})$ via the Weil representation $\omega=\omega_\psi$. For $\varphi\in S(V(\mathbb{A}))$, one has the usual theta function
\begin{equation}\label{appeq03}
\vartheta(g, h ; \varphi):=\sum_{v \in V(\mathbb{Q})}(\omega(g, h) \varphi)(v),\quad g \in G_{\mathbb{A}}^{\prime}, h \in H(\mathbb{A}).
\end{equation}
It is left invariant under $G_{\mathbb{Q}}^{\prime}$ and trivially left invariant under $H(\mathbb{Q})$.
For $z\in {\bf D}$, we consider the positive definite quadratic form $(\;,\;)_z$ on $V(\mathbb{R})$ defined by
$$
(v, v)_{z} :=\left(v_{z^{\bot}}, v_{z^{\bot}}\right)-\left(v_{z}, v_{z}\right),\quad v \in V(\mathbb{R}).
$$
It is easy to see that the Gaussian $\varphi_{\infty}(v, z):=\exp \left(-\pi(v, v)_{z}\right)$ belong to $S(V(\mathbb{R}))$, and
$\varphi_{\infty}(hv, hz)=\varphi_{\infty}(v, z)$ for all $h\in H(\mathbb{R})$. Moreover it has weight $n/2-1$ under the action of $K_\infty^{\prime}\subset G_{\mathbb{R}}^{\prime}$. If $\varphi_f\in S(V(\mathbb{A}_f))$, one obtains a theta function on $G_{\mathbb{A}}^{\prime}\times H(\mathbb{A})$ by putting
\begin{equation}\label{appeq04}
\theta\left(g, h ; \varphi_{f}\right) :=\vartheta\left(g, h ; \varphi_{\infty}\left(\cdot, z_{0}\right) \otimes \varphi_{f}(\cdot)\right)
\end{equation}
where $z_0\in {\bf D}$ as a fixed base point. This theta function can be written as a theta function on $\mathbb{H}\times{\bf D}\times\mathbb{H}(\mathbb{A}_f)$ in the usual way.
For $z\in{\bf D}$ we choose $h_z\in H(\mathbb{R})$ such that $h_z\cdot z_0 = z$. Then
$w\left(h_{z}\right) \varphi_{\infty}\left(\cdot, z_{0}\right)=\varphi_{\infty}(\cdot, z)$. For $\tau=x+i y \in \mathbb{H}$, we put
\begin{equation*}
g_{\tau}:=\left( \begin{array}{ll}{1} & {x} \\ {0} & {1}\end{array}\right) \left( \begin{array}{ll}{y^{\frac{1}{2}}} & {0} \\ {0} & {y^{-\frac{1}{2}}}\end{array}\right) \left( \begin{array}{cc}{y^{\frac{1}{2}}} & {x y^{-\frac{1}{2}}} \\ {0} & {y^{-\frac{1}{2}}}\end{array}\right)
\end{equation*}
and $g_{\tau}^{\prime}=\left(g_{\tau}, 1\right) \in G_{\mathbb{R}}^{\prime}$. So $g_{\tau}^{\prime} \cdot i=\tau$. We define
\begin{align}\label{appeq05}
	\theta\left(\tau, z, h_{f} ; \varphi_{f}\right):&=y^{-\frac{n}{4}+\frac{1}{2}} \vartheta\left(g_{\tau}^{\prime},\left(h_{z}, h_{f}\right);\varphi_{\infty}\left(\cdot, z_{0}\right) \otimes \varphi_{f}(\cdot)\right). \notag\\
&=y^{-\frac{n}{4}+\frac{1}{2}} \sum_{v \in V(\mathbb{Q})} \omega\left(g_{\tau}^{\prime}\right)\left(\varphi_{\infty}(\cdot, z) \otimes \omega\left(h_{f}\right) \varphi_{f}\right)(v)	\notag\\
&=y \sum_{v \in V(\mathbb{Q})} e^{2 \pi i\left(Q(v_{z^\bot})\tau+Q\left(v_{z}\right) \overline{\tau}\right)}\otimes \varphi_{f}\left(h_{f}^{-1} v\right),
\end{align}
where $\tau \in \mathbb{H}, z \in {\bf D} \text { and } h_{f} \in H\left(\mathbb{A}_{f}\right)$.
If $\gamma^{\prime}=(\gamma, \phi) \in \Gamma^{\prime}$ and $\gamma_{0}=\gamma^{\prime} \gamma^{\prime \prime}$ as in (\ref{appeq02}), we have, according to \cite[Lemma 1.1]{Ku2},
\begin{equation}\label{appeq06}
\theta\left(\gamma \cdot \tau, z, h_{f} ; \varphi_{f}\right)= \phi(\tau)^{n-2} \,\theta\left(\tau, z, h_{f} ; w_{f}\left(\gamma^{\prime\prime}\right)^{-1} \varphi_{f}\right)
\end{equation}
Let $L, L^{\prime}, S_{L}, \phi_{\mu},D(L) :=L^\prime / L, \varphi_{L}\cdots h_L$ as in section 6. We define a $S_L$-valued theta function by
\begin{equation}\label{appeq07}
\theta_{L}\left(\tau, z, h_{f}\right) :=\sum_{\mu\in D(L)} \theta\left(\tau, z, h_{f} ; \varphi_{\mu}\right)\varphi_{\mu}, \quad \tau \in \mathbb{H}, z \in {\bf D},\ h_{f} \in H\left(\mathbb{A}_{f}\right).
\end{equation}
Let $\gamma^{\prime} \in \Gamma$ with $\gamma_{0}=\gamma^{\prime} \gamma^{\prime \prime}$ as in (\ref{appeq02}). We put
 \begin{equation}\label{appeq08}
 \varphi_{L}\left(\gamma^{\prime}\right) :=\overline{\omega_{f}}\left(\gamma^{\prime \prime}\right).
 \end{equation}
Then $\varphi_{L}$ is the Weil representation of $\Gamma$ on $S_L$ defined by the formulas (6.1) and (6.2) in Section 6. According to (\ref{appeq06}) and (\ref{appeq07}), we obtain
\begin{equation}\label{appeq09}
\theta_{L}\left(\gamma \cdot \tau, z, h_{f}\right)=\phi(\tau)^{n-2} \varphi_{L}\left(\gamma^{\prime}\right)\theta_{L}\left(\tau, z, h_{f}\right).
\end{equation}
Assume $n$ is even. For $a\in \mathbb{G}_m$ and $b\in \mathbb{G}_a$, we put
$$
m(a) :=\left[a, 0,0, a^{-1}\right] \qquad\text { and } \qquad n(b)=[1,b,0,1].
$$
Let
$$
M :=\left\{ m(a) \,\middle|\, a \in \mathbb{G}_{m}\right\} \qquad
\text{ and } \qquad
N :=\left\{ n(b) \,\middle|\, b \in \mathbb{G}_{a}\right\}.
$$
Let $P=MN$ be the parabolic subgroup of $G$. Let $\chi_{v}$ denote the quadratic character of  $\mathbb{A}^{\times} / \mathbb{Q}^{\times}$ associated to $V$ defined by
$$
\chi_{v}(a)=\left(a,(-1)^{\dim V / 2} \operatorname{det}(V)\right)_{\mathbb{A}}
$$
where $\det(V)$ denotes the Gram determinant of $V$ and $(\cdot, \cdot)_{\mathbb{A}}$ is the Hilbert symbol on $\mathbb{A}$. For $s\in \mathbb{C}$, we let $I\left(s, \chi_{v}\right)$ be the principal series of $G({\mathbb{A}})$ induced by $\chi_{v}\cdot |\cdot|^s$. It consists of all smooth functions $\Phi(g, s)$ on $G(\mathbb{A})$ satisfying
$$
\Phi(n(b) m(a) g, s)=\chi_{v}(a) \left| a\right|^{s+1} \Phi(g, s)
$$
for all $b\in \mathbb{A}$ and $a\in \mathbb{A}^\times$. There is a $G({\mathbb{A}})$-intertwining operator
$$
\lambda : S(V(\mathbb{A})) \longrightarrow I\left(s_{0}, \chi_{v}\right), \quad \lambda(\varphi)(g) :=(\omega(g) \varphi)(0),
$$
where
$s_{0}=\frac{1}{2} \dim V-1$. A section $\Phi(s) \in I\left(s, \chi_{v}\right)$ is called {\sf standard} if its restriction to $K_{\infty} K$ is independent of $s$. Using the Iwasawa decomposition $G(\mathbb{A})=N(\mathbb{A}) M(\mathbb{A}) K_{\infty} K$, we see that the function $\lambda(\varphi) \in I\left(s_{0}, \chi_{v}\right)$ has a unique extension to a standard section $\lambda(\varphi,s) \in I\left(s, \chi_{v}\right)$ such that
$\lambda(\varphi,s_0)=\lambda(\varphi)$. \\
\vskip 3mm
For any standard section $\Phi(g,s)$, the Eisenstein series
\begin{equation}\label{appeq10}
E(g, s ; \Phi) :=\sum_{\gamma \in P(\mathbb{Q})\backslash G(\mathbb{Q})} \Phi(\gamma g, s)
\end{equation}
converges for Re$(s)>1$, and defines an automorphic form on $G(\mathbb{A})$. It has a meromorphic continuation to the whole complex plane and satisfies a functional equation relating $E(g, s ; \Phi)$ and $E(g, -s ; M(s)\Phi)$.
\vskip 3mm
%\begin{theorem*}\,
{\bf Theorem A.1}\,[The Siegel-Weil formula] {\it Let $V$ be a rational quadratic space of siguature $(n,2)$. Assume $V$ is anisotropic or that $\operatorname{dim} V-r_{0}>2$, where $r_0$ is the Witt index of $V$. Then $E(g, s ; \lambda(\varphi))$ is holomorphic at $s_0$ and
\begin{equation}\label{appeq11}
\frac{\alpha}{2} \int_{SO(V)(\mathbb{R})\backslash SO(V)(\mathbb{A})} \vartheta(g, h ; \varphi) d h = E\left(g, s_{0} ; \lambda(\varphi)\right),
\end{equation}
where $dh$ is the Tamagawa measure on $SO(V)(\mathbb{A})$, and $\alpha=2$ if $n=0$, and $\alpha=1$ if $n>0$.}
%\end{theorem*}
\begin{proof}
	The proof can be found in \cite[Theorem 4.1]{Ku2}.
\end{proof}
For the present being, we consider the special case where $(V,Q)$ is a definite space over $\mathbb{Q}$ of signature $(0,2)$. Then $(V,Q)$ is isomorphic to $(k,-cN(\cdot))$ for an imaginary quadratic field $k$ with the negative of the norm scaled by a constant $c\in \mathbb{Q}^+$. Then $H(\mathbb{Q}) \cong k^{\times}$ and $SO(V)$ is the group of norm $1$ elements in $k$. The homomorphism $H\rightarrow SO(V)$ is given by $h\longmapsto h\overline{h}^{-1}$, and $SO(V)$ acts on $k$ by multiplication. The Grassmannian $\bf D$ consists of $z_{v}^{+} \text { and } z_{v}^{-}$ given by $V(\mathbb{R})$ with positive and negative orientation respectively. For $\ell\in \mathbb{Z}$, we define a $S_L$-valued Eisenstein series of weight $\ell$ by
\begin{equation}\label{appeq12}
E_{L}(\tau, s ; \ell):=y^{-\frac{\ell}{2}} \sum_{\mu \in D(L)}E\left(g_{\tau}, s ;\Phi_{\infty}^{\ell} \otimes \lambda_{f}\left(\phi_{\mu}\right)\right)\phi_{\mu}.
\end{equation}
We normalize the measure on $SO(V)(\mathbb{R})$(resp.\,$SO(V)(\mathbb{A}_f)$) so that vol$(SO(V)(\mathbb{R}))=1$ (resp. vol$(SO(V)(\mathbb{Q})\backslash SO(V)(\mathbb{A}_f))=2$). Then according to Theorem A.1, we obtain the Siegel-Weil formula
\begin{equation}\label{appeq13}
\int_{SO(V)(\mathbb{Q})\backslash SO(V)(\mathbb{A}_f)} \vartheta_L(\tau, z^\pm ; h_f) d h_f=E_{L}(\tau, 0 ;-1).
\end{equation}
If $L_k$\,(resp.\,$R_k$) is the Maass lowering\,(resp.\,raising) operator (cf.\,(6.7) and (6.8) in section 6), we see easily that
$$
L_{\ell} E_{L}(\tau, s ; \ell)=\frac{1}{2}\,(s+1-\ell)\, E_{L}(\tau, s ; \ell-2)
$$
and
$$
R_{\ell} E_{L}(\tau, s ; \ell)=\frac{1}{2}\,(s+1+\ell)\, E_{L}(\tau, s ; \ell+2).
$$
Thus we have
\begin{equation}\label{appeq14}
L_{1} E_{L}(\tau, s ; 1)=\frac{s}{2}\,E_{L}(\tau, s ; -1).
\end{equation}
We see that $E_{L}(\tau, 0 ; 1)=0$ because $E_{L}(\tau, s ; -1)$ is holomorphic at $s=0$. And $E_{L}(\tau, s ; 1)$ is an incoherent Eisenstein series and hence satisfies an odd functional equation under $s \mapsto 1-s$. From (\ref{appeq14}), We see that

\begin{equation}\label{appeq15}
L_{1} E_{L}^\prime(\tau, 0 ; 1)=\frac{1}{2}\,E_{L}(\tau, 0 ; -1),
\end{equation}
where $E_{L}^\prime(\tau, s ; 1)$ denotes the derivative of $E_{L}(\tau, s ; 1)$ with respect to $s$. \\
As in \cite{BY2, Sch}, we write the Fourier expansion of $E_{L}(\tau, s ; 1)$ in the form
\begin{equation}\label{appeq16}
E_{L}(\tau, s ; 1)=\sum_{\mu \in D(L)} \sum_{m \in \mathbb{Q}} A_{\mu}(s, m, y)\, q^{m} \,\phi_\mu,
\end{equation}
where $\tau=x+i y \in \mathbb{H}, \text { and } q=e^{2 \pi i \tau}$.
We note that $A_{\mu}(s, m, y)=0$ unless $m \in Q(\mu)+\mathbb{Z}$. \\
\indent
Since $E_{L}(\tau, 0 ; 1)=0$, the coefficients have a Laurent expansion of the form
\begin{equation}\label{appeq17}
A_{\mu}(s, m, y)=b_{\mu}\left(m, y\right) s+O\left(s^{2}\right)
\end{equation}
at $s=0$, and we have
\begin{equation}\label{appeq18}
E_{L}^\prime(\tau, 0 ; 1)=\sum_{\mu \in D(L)} \sum_{m \in \mathbb{Q}} b_{\mu}\left(m, y\right) q^{m} \phi_\mu.
\end{equation}
We define
\begin{equation}\label{appeq19}
\kappa (m, \mu) :=\left\{\begin{array}{l}{\displaystyle\lim _{y \rightarrow \infty} b_{\mu}(m, y)\qquad\quad\,\text {,\;if } m \neq 0 \text { or } \mu \neq 0 \text {,}} \\ {\displaystyle\lim _{y \rightarrow \infty} b_{0}(0, y)-\log y \text { ,\;if } m=0 \text { and } \mu=0. }\end{array}\right
.\end{equation}
According to \cite[Proposition 2.19 and Lemma 2.20]{Sch}, the limits exist. If $m>0$, then $b_{\mu}(m, y)$ is independent of $y$ and equal to $\kappa(m,\mu)$. We also $\kappa(m,y)=0$ for $m<0$ or $m=0,\mu=0$. Now we define a holomorphic $S_L$-valued function on $\mathbb{H}$ by
\begin{equation}\label{appeq20}
\mathscr{E}_{L}(\tau) :=\sum_{\mu \in D(L)} \sum_{m \in \mathbb{Q}} \kappa(m, y)\,q^m \,\phi_{\mu}.
\end{equation}
\begin{remark*}
$E_{L}^\prime(\tau, 0 ; 1)$ is a harmonic weak Maass form of weight $1$ and
$$
\xi_1(E_{L}^\prime(\tau, 0 ; 1))=y^{-1}E_{L}(\tau, 0 ; 1).
$$
We note that $\mathscr{E}_{L}(\tau)$ is the holomorphic part of $E_{L}^\prime(\tau, 0 ; 1)$.
\end{remark*}
Now we consider the lattice $(L,Q)=\left(\mathfrak{a},-\frac{N}{N(\mathfrak{a})}\right)$, where $\mathfrak{a}$ is a fractional ideal of an imaginary quadratic field $k=\mathbb{Q}(\sqrt{D})$ with fundamental discriminent $D\equiv 1(\text{mod } 4)$. Let $\mathcal{O}_k$ be the ring of integers in $k$, and let $\partial$ be the different of $k$. In this case, $V=k, L^\prime=\partial^{-1}\mathfrak{a}$ and
$$
D(L) :=L^{\prime} / L=\partial^{-1} \mathfrak{a} /\mathfrak{a} \cong \partial^{-1} / \mathcal{O}_{k} \cong \mathbb{Z} / D \mathbb{Z}.
$$
Let $\chi_D$ be the quadratic Dirichlet character associated to $k/\mathbb{Q}$. We set
\begin{equation}\label{appeq21}
\Lambda\left(s, \chi_{D}\right)=\left|D\right|^{\frac{s}{2}} \Gamma_{\mathbb{R}}{(s+1)} L\left(s, \chi_{D}\right), \quad\Gamma_{\mathbb{R}}{(s)}:=\pi^{-\frac{s}{2}}\,\Gamma\left(\frac{s}{2}\right).
\end{equation}
and
\begin{equation}\label{appeq22}
E_{L}^{*}(\tau, s):=\Lambda(s+1,\chi_D)\, E_L(\tau, s ; 1).
\end{equation}
Bruimier and Yang \cite{BY2} proved that $E_{L}^{*}(\tau,-s)=-E_{L}^{*}(\tau, s).$
We observe that $\Lambda(s,\chi_D)=\Lambda(1-s,\chi_D)$ and
\begin{equation}\label{appeq23}
\Lambda(1,\chi_D)=\frac{\sqrt{D}}{\pi} L(1,\chi_{D})= 2 \cdot \frac{h_k}{\omega_{k}},
\end{equation}
where $h_k$ is the class number of $k$ and $\omega_{k}$ is the number of roots of unity in $k$. \\
\par Finally we go back to the quadratic space $(V,Q)$ over $\mathbb{Q}$ of signature $(n,2)$. Let $L$ be an even lattice and $U\subset V$ a negative definite two dimentional $\mathbb{Q}$-space of $V$. We put
$$
N=L\cap U \qquad \text{ and } \qquad S=L\cap V_+ \qquad\text{(cf.\,(6.19))}.
$$
For any $F\in S_{1+\frac{n}{2},\rho_L}$, we define an $L$-function $L(F,U,s)$ by means of the convolution integral
%\begin{equation}\label{aapeq24}
\begin{eqnarray}\label{aapeq24}
L(F,U,s):&=&\left(\theta_{S}(\tau) \otimes E_{N}(\tau, s ; 1),F(\tau)\right)_\text{Pet} \\
&=& \int_{\mathfrak{F}}\langle\theta_{S}(\tau) \otimes E_{N}(\tau, s ; 1),\overline{F(\tau)}\rangle \, y^{1+\frac{n}{2}}\, d\mu(\tau),\nonumber
\end{eqnarray}
%\end{equation}
where $\mathfrak{F}=\left\{\tau \in \mathbb{H} \,\middle| \,|\text{Re}(\tau)|\leq \frac{1}{2},\,|\tau|\geq 1 \right\}$ and $d\mu(\tau)=y^{-2}dxdy$.
%\end{document}
%\begin{section}{{\bf Final remarks}}
%\setcounter{equation}{0}

%\vskip 3mm

%\end{section}
\vskip 12mm
%%%%%%%%%%%%%%%%%%%%%%%%%%%%%%%%%%%%%%%%%%%%%%%%%%%%%%%%%%%%%%%%%%%%%%%%%%%%%%%%%%%%%%%%%%%%%%%%%%%%%%%%%%%%%%%%%%%%%%%%%%%%%%%%%%%%%%%%%%%%%%%%%%%%%%%%
%%%%%%%%%%%%%%%%%%%%%%%%%%%%%%%%%%%%%%%%%%%%%%%%%%%%%%%%%%%%%%%%%%%%%%%%%%%%%%%%%%%%%%%%%%%%%%%%%%%%%%%%%%%%%%%%%%%%%%%%%%%%%%%%%%%%%%%%%%%%%%%%%%%%%%%%
%%%%%%%%%%%%%%%%%%%%%%%%%%%%%%%%%%%%%%%%%%%%%%%%%%%%%%%%%%%%%%%%%%%%%%%%%%%%%%%%%%%%%%%%%%%%%%%%%%%%%%%%%%%%%%%%%%%%%%%%%%%%%%%%%%%%%%%%%%%%%%%%%%%%%%%%
%%%%%%%%%%%%%%%%%%%%%%%%%%%%%%%%%%%%%%%%%%%%%%%%%%%%%%%%%%%%%%%%%%%%%%%%%%%%%%%%%%%%%%%%%%%%%%%%%%%%%%%%%%%%%%%%%%%%%%%%%%%%%%%%%%%%%%%%%%%%%%%%%%%%%%%%
%%%%%%%%
%%%%%%%%
%%%%%%%%                   Appendix B. The Andr{\'e}-Oort Conjecture}
%%%%%%%%
%%%%%%%%
%%%%%%%%%%%%%%%%%%%%%%%%%%%%%%%%%%%%%%%%%%%%%%%%%%%%%%%%%%%%%%%%%%%%%%%%%%%%%%%%%%%%%%%%%%%%%%%%%%%%%%%%%%%%%%%%%%%%%%%%%%%%%%%%%%%%%%%%%%%%%%%%%%%%%%%%
%%%%%%%%%%%%%%%%%%%%%%%%%%%%%%%%%%%%%%%%%%%%%%%%%%%%%%%%%%%%%%%%%%%%%%%%%%%%%%%%%%%%%%%%%%%%%%%%%%%%%%%%%%%%%%%%%%%%%%%%%%%%%%%%%%%%%%%%%%%%%%%%%%%%%%%%
%%%%%%%%%%%%%%%%%%%%%%%%%%%%%%%%%%%%%%%%%%%%%%%%%%%%%%%%%%%%%%%%%%%%%%%%%%%%%%%%%%%%%%%%%%%%%%%%%%%%%%%%%%%%%%%%%%%%%%%%%%%%%%%%%%%%%%%%%%%%%%%%%%%%%%%%
%%%%%%%%%%%%%%%%%%%%%%%%%%%%%%%%%%%%%%%%%%%%%%%%%%%%%%%%%%%%%%%%%%%%%%%%%%%%%%%%%%%%%%%%%%%%%%%%%%%%%%%%%%%%%%%%%%%%%%%%%%%%%%%%%%%%%%%%%%%%%%%%%%%%%%%%
%\newpage

\appendix
%\section*{{\LARGE{\bfseries Appendix B. The Andr{\'e}-Oort Conjecture}} \\}
\renewcommand{\theequation}{A.\arabic{equation}}

%\noindent
{\bf \large Appendix\ B. The Andr{\'e}-Oort Conjecture}
\vskip 5mm
In this section we review recent progress on the Andr{\'e}-Oort conjecture quite briefly.
\vskip 3mm\noindent
{\bf Definition\ B.1.}
%\begin{definition}
{\it Let $(G,X)$ be a Shimura datum and let $K$ be a compact open subgroup of $G({\mathbb A}_f).$ We let
\begin{equation*}
Sh_K(G,X):=G(\BQ)\ba X \times G({\mathbb A}_f)/K
\end{equation*}
be the Shimura variety associated to $(G,X).$ An algebraic subvariety $Z$ of the Shimura variety $Sh_K(G,X)$ is said to be
{\sf weakly special} if there exist a Shimura sub-datum $(H, X_H)$ of $(G,X)$, and a decomposition
\begin{equation*}
(H^{\rm ad},X_H^{\rm ad})=(H_1,X_1) \times (H_2,X_2)
\end{equation*}
and $y_2\in X_2$ such that $Z$ is the image of $X_1\times \{ y_2\}$ in $Sh_K(G,X)$. Here $(H^{\rm ad},X_H^{\rm ad})$ denotes
the adjoint Shimura datum associated to $(G,X)$ and $(H_i,X_i)\ (i=1,2)$ are Shimura data. In this definition,
a weakly special subvariety is said to be ${\sf special}$ if it contains a special point and $y_2$ is special.}
%\end{definition}
\vskip 0.25cm
Andr{\'e} \cite{A} and Oort \cite{O} made conjectures analogous to the Manin-Mumford conjecture where the ambient variety is a Shimura
variety (the latter partially motivated by a conjecture of Coleman \cite{Co}). A combination of these has become known as the
Andr{\'e}-Oort conjecture (briefly the A-O conjecture).
The Andr{\'e}-Oort
\vskip 0.35cm\noindent
{\bf A-O Conjecture.} {\it Let $S$ be a Shimura variety and let $\Sigma$ be a set of special points in $S$. Then every irreducible component of the Zariski closure of $\Sigma$ is a special subvariety.}

\vskip 3mm\noindent
{\bf Definition\ B.2.} \cite{Pi1, Pi2}
%\begin{definition}\cite{Pi1, Pi2}
{\it A {\sf pre-structure} is a sequence $\Sigma =(\Sigma_n:\,n\geq 1)$ where each $\Sigma_n$ is a
collection of subsets of $\BR^n$. A pre-structure $\Sigma$ is called a {\sf structure} over
the real field if, for all $n,m\geq 1 $ with $m\leq n$, the following conditions are satisfied:
\\
\indent (1) $\Sigma_n$ is a Boolean algebra (under the usual set-theoretic operations);\\
\indent (2) $\Sigma_n$ contains every semi-algebraic subset of $\BR^n$;\\
\indent (3) if $A\in \Sigma_m$ and $B\in\Sigma_n$, then $A\times B\in \Sigma_{m+n}$;\\
\indent (4) if $n\geq m$ and $A\in \Sigma_n$, then $\pi_{n,m}(A)\in \Sigma_m$, where
$\pi_{n,m}:\BR^n\lrt \BR^m$ is a coordinate\\
\indent\ \ \ \ \ projection on the first $m$ coordinates.
\vskip 0.3cm
\noindent If $\Sigma$ is a structure, and, in addition,\\
\vskip 0.05cm
\indent (5) the boundary of every set in $\Sigma_1$ is finite,\\
\vskip 0.05cm
\noindent
then $\Sigma$ is called an {\sf o-minimal structure} over the real field.
\vskip 0.2cm
If $\Sigma$ is a structure and $Z\subset \BR^n$, then we say that $Z$ is
{\sf definable} in $\Sigma$ if $Z\in \Sigma_n$. A function $f:A\lrt B$ is {\sf definable }
in a structure $\Sigma$ if its graph is definable, in which case the domain $A$ of $f$ and
image $f(A)$ are also definable by the definition.
If $A,\cdots,f,\cdots$ are sets or functions, then we denote by $\BR_{A,\cdots,f,\cdots}$
the smallest structure containing $A,\cdots,f,\cdots$. By a {\sf definable family of sets}
we mean a definable subset $Z\subset \BR^n\times\BR^m$ which we view as a family of fibres
$Z_y\subset \BR^n$ as $y$ varies over the projection of $Z$ onto $\BR^m$ which is definable,
along with all the fibres $Z_y$. A family of functions is said to be definable if the family
of their graphs is. A {\sf definable set} usually means a definable set in some o-minimal
structure over the real field.}
%\end{definition}

\vskip 3mm\noindent
{\bf Remark\ B.1.}
%\begin{remark}
{\it The notion of a o-minimal structure grew out of work van den Dries \cite{DR1, DR2} on Tarski's problem concerning the decidability of the real ordered field with the exponential function, and
was studied in the more general context of linearly ordered structures
by Pillay and Steinhorn\,\cite{PS}, to whom the term ``o-minimal" (``order-minimal") is due.}
%\end{remark}

\vskip 0.35cm
In 2011 Pila gave a unconditional proof of the A-O conjecture for arbitrary products of modular curves using the theory of o-minimality.

\vskip 3mm\noindent
{\bf Theorem\ B.1.}
%\begin{theorem}
{\it Let
\begin{equation*}
X=Y_1\times \cdots \times Y_n\times E_1\times\cdots \times E_m \times {\mathbb G}_m^{\ell},
\end{equation*}
where $n,m,\ell\geq 0,\ Y_i=\G_{(i)}\ba \BH_1 (1\leq i\leq n)$ are modular curves corresponding to congruence subgroups $\G_{(i)}$ of
$SL(2,\BZ)$ and $E_j\,(1\leq j\leq m)$ are elliptic curves defined over ${\overline \BQ}$ and ${\mathbb G}_m$ is the multiplicative group.
Suppose $V$ is a subset of $X$. Then $V$ contains only a finite number of maximal special subvarieties.}
%\end{theorem}
\vskip 0.23cm\noindent
{\it Proof.} See Pila \cite{Pi1}, Theorem 1.1.
\hfill $\Box$

\vskip 0.25cm
In 2013 Peterzil and Starchenko proved the following theorem using the theory of o-minimality.

\vskip 3mm\noindent
{\bf Theorem\ B.2.}
%\begin{theorem}
{\it The restriction of the uniformizing map $\pi:\BH_g\lrt {\mathcal A}_g$ to the classical fundamental domain for the Siegel modular group
$Sp(2g,\BZ)$ is definable.}
%\end{theorem}
\vskip 0.23cm\noindent
{\it Proof.} See Peterzil and Starchenko \cite{P-S1,P-S2}.
\hfill $\Box$

\vskip 0.25cm
In 2014 Pila and Tsimerman gave a conditional proof of the A-O conjecture for the Siegel modular variety $\mathcal A_g.$

\vskip 3mm\noindent
{\bf Theorem\ B.3.}
%\begin{theorem}
{\it If $g\leq 6,$ then the A-O conjecture holds for $\mathcal A_g.$ If $g\geq 7$, the A-O conjecture holds for $\mathcal A_g$ under the
assumption of the Generalized Riemann Hypothesis (GRH) for CM fields.}
%\end{theorem}
\vskip 0.23cm\noindent
{\it Proof.} See Pila-Tsimerman \cite{PT1,PT2}.
\hfill $\Box$

\vskip 0.25cm
Quite recently using Galois-theoretic techniques and geometric properties of Hecke correpondences, Klingler and Yafaev proved
the A-O conjecture for a general Shimura variety, and independently using Galois-theoretic and ergodic techniques Ullmo and Yafaev
proved the A-O conjecture for a general Shimura variety, under the assumption of the GRH for CM fields or another suitable assumption.
The explicit statement is given as follows.

\vskip 3mm\noindent
{\bf Theorem\ B.4.}
%\begin{theorem}
{\it Let $(G,X)$ be a Shimura datum and $K$ a compact open subgroup of $G({\mathbb A}_f)$. Let $\Sigma$ be a set of special points in $Sh_K(G,X)$. We make one of the two following assumptions\,:
\vskip 0.25cm
(1) Assume the GRH for CM fields.
\vskip 0.15cm
(2) Assume that there exists a faithful representation $G\hookrightarrow GL_n$ such that with respect to this representation, the
Mumford-Tate group $MT(s)$ lie in one $GL_n(\BQ)$-conjugacy class as $s$ ranges through $\Sigma.$ Then every irreducible component of $\Sigma$ in $Sh_K(G,X)$ is a special subvariety.}
\vskip 0.23cm\noindent
{\it Proof.} See Klingler-Yafaev \cite{K-Y} and Ullmo-Yafaev \cite{U-Y}.
\hfill $\Box$

\vskip 3mm\noindent
{\bf Remark\ B.2.}
{\it We refer to \cite{Pi2} for the theory of o-minimality and the A-O conjecture. We also refer to \cite{G} for the A-O conjecture for mixed Shimura varieties.}

\vskip 3mm
Now we give a sketch of Tsimerman's proof of A-O conjecture for the Siegel modular variety
$ \mathcal A_g \ (g\geq 1$).
\vskip 2mm
Let $E$ be a CM field with totally real field $F$. We set $g=[F:\BQ]$ and let $\Phi$ be a CM type of $E$. Define $S(E,\Phi)$ to be the set of all isomorphism classes of complex abelian varieties of dimension $g$ together with an embedding $\mathcal O_E \hookrightarrow {\rm End}_\BC (A)$ such that the induced action of $E$ on the tangent space $T_0 (A(\BC))\cong \BC^g$ is given by $\Phi$.

\vskip 3mm\noindent
{\bf Proposition\ B.1.} {\it For a primitive CM type $\Phi$, there exist $A_1,\,A_2\in S(E,\Phi)$ such that
\begin{equation*}
\deg^{\ell} (A_1,A_2)\geq |d_E|^{1/4-o_g(1)},
\end{equation*}
where $\deg^{\ell} (A_1,A_2)$ denotes the lowest degree among the degrees of all isogenies between $A_1$ and $A_2$.}
\begin{proof} The proof can be found in \cite[Proposition 2.2]{T}.\end{proof}

\vskip 3mm\noindent
{\bf Masser-W{\"u}stholz\ Isogeny Theorem\,:} {\it Let $A$ and $B$ be two abelian varieties of dimension $g$
over a number field $k$, and suppose that there exists an isogeny between them over $\BC$. Let $N$ be the minimal degree of isogenies between them over $\BC$. Then we have the following bound\,:
\begin{equation*}
N \ll_g \left( {\rm max}(h_{\rm Fal}(A),[k:\BQ])\right)^{c_g},
\end{equation*}
where $c_g$ is a positive constant depending only on $g$.}
\vskip 2mm
We refer to \cite{M-W} for more detail.

\vskip 3mm
In this setting, the averaged Colmez's formula can be written as follows\,:
\begin{equation*}
\noindent
{\bf (ACF)}\hskip 10mm \ \ \sum_{\Phi} h_{\rm Fal}(E,\Phi)=\sum_{\Phi}\left( \sum_{\rho} c_{\rho,\Phi}
\left( \frac{L'(0,\rho)}{L(0,\rho)}+ \frac{\log f_\rho}{2} \right)\right), \hskip 35mm
\end{equation*}
where $\Phi$ runs over all $2^g$ CM types of $E$, $\rho$ runs over irreducible complex representations of the Galois group of the normal closure $E^{\rm nc}$ of $E$ for which $L(0,\rho)$ does not vanish, $c_{\rho,\Phi}$ are rational numbers depending only on the finite combinatorial data given by $\Phi$ and the Galois group of $E^{\rm nc}$, and $f_\rho$ is the Artin conductor of $\rho$.
\vskip 3mm
Using the above averaged Colmez's formula (ACF) and the following bounds (B1), (B2) and (B3)\,:
\par
(B1) \ \ $L(1,{\bar \rho})=| d_E |^{o_g (1)}\,;$
\par
(B2) \ \ $L'(1,{\bar \rho})\leq | d_E |^{o_g (1)}\,;$
\par
(B3) \ \ $h_{\rm Fal} (B) \geq O_g (1)$ for any abelian variety $B$ of dimension $g$,
\vskip 3mm\noindent
Tsimerman \cite[Corollary 3.3]{T} proved the following proposition\,:
\vskip 3mm\noindent
{\bf Proposition\ B.2.} {\it Let $E$ be a CM field with $[E:\BQ]=2g,\ \Phi$ be a primitive CM type and
$A\in S(E,\Phi)$. Then we have the bound
\begin{equation*}
h_{\rm Fal} (A) \leq | d_E |^{o_g (1)}.
\end{equation*}}

\vskip 2mm
The bound (B1) follows from the classical Brauer-Siegel theorem (cf.\,\cite{BR})  the bound (B2) from a standard sub-convexity estimate for $L(s,{\bar \rho})$ (cf.\,\cite{I-K}), and the statement (B3) is due to the result of Bost (cf.\,\cite{Boj}).

\vskip 3mm
Combining Proposition B.1, Masser-W{\"u}stholz Isogeny Theorem and Proposition B.2, Tsimerman \cite[Theorem 4.2]{T} proved the following theorem.
\vskip 3mm\noindent
{\bf Theorem\ B.5.} {\it There exists a constant $\delta_g$ such that if $E$ is a CM field of degree $2g,\ \Phi$ is a primitive CM type of $E$, and $A\in S(E,\Phi)$, the the field of moduli $\BQ (A)$ of $A$ satisfies
\begin{equation*}
[\BQ (A):\BQ] \gg | d_E |^{\delta_g}.
\end{equation*}}

\vskip 2mm
For $x\in \mathcal A_g (\overline{\BQ})$, let $A_x$ denote the corresponding ppav of dimension $g$,
let $R_x$ be the center of the endomorphism algebra ${\rm End}_{\overline{\BQ}}(A_x)$, and ${\rm Disc}(R_x)$ the discriminant of $R_x.$ Finally, using Theorem B.5 together with some properties, Tsimerman \cite[Theorem 5.2]{T} proved Edixhoven's conjecture \cite{EMO}.

\vskip 3mm\noindent
{\bf Theorem\ B.6\ (Edixhoven's\ conjecture).} {\it Let $g\geq 1.$ Then there exists a constant $b_g >1$ depending only on $g$ such that, for a special point $x\in \mathcal A_g (\overline{\BQ})$,
\begin{equation*}
  | {\rm Disc}(R_x)| \ll_g | {\rm Gal}(\overline{\BQ}/\BQ)\cdot x |^{b_g}
\end{equation*}
with the implied constants depending on $g$.}
\begin{proof} The detailed proof can be found in \cite[Theorem 5.2]{T}.\end{proof}

\vskip 3mm
The fact that Theorem B.6 implies the A-O conjecture for the Siegel modular varieties
$\mathcal A_g \ (g\geq 1)$ was proved by Pila and Tsimerman \cite[Theorem 7.1]{PT2}. This proof is based on the theory of o-minimality, point-counting and the works of Bombieri, Zannier, Wilkie et al.

\vskip 12mm
%%%%%%%%%%%%%%%%%%%%%%%%%%%%%%%%%%%%%%%%%%%%%%%%%%%%%%%%%%%%%%%%%%%%%%%%%%%%%%%%%%%%%%%%%%%%%%%%%%%%%%%%%%%%%%%%%%%%%%%%%%%%%%%%%%%%%%%%%%%%%%%%%%%%%%%%
%%%%%%%%%%%%%%%%%%%%%%%%%%%%%%%%%%%%%%%%%%%%%%%%%%%%%%%%%%%%%%%%%%%%%%%%%%%%%%%%%%%%%%%%%%%%%%%%%%%%%%%%%%%%%%%%%%%%%%%%%%%%%%%%%%%%%%%%%%%%%%%%%%%%%%%%
%%%%%%%%
%%%%%%%%
%%%%%%%%      Appendix C. The Gross-Kohnen-Zagier theorem in higher dimension
%%%%%%%%
%%%%%%%%%%%%%%%%%%%%%%%%%%%%%%%%%%%%%%%%%%%%%%%%%%%%%%%%%%%%%%%%%%%%%%%%%%%%%%%%%%%%%%%%%%%%%%%%%%%%%%%%%%%%%%%%%%%%%%%%%%%%%%%%%%%%%%%%%%%%%%%%%%%%%%%%
%%%%%%%%%%%%%%%%%%%%%%%%%%%%%%%%%%%%%%%%%%%%%%%%%%%%%%%%%%%%%%%%%%%%%%%%%%%%%%%%%%%%%%%%%%%%%%%%%%%%%%%%%%%%%%%%%%%%%%%%%%%%%%%%%%%%%%%%%%%%%%%%%%%%%%%%
%%%%%%%%%%%%%%%%%%%%%%%%%%%%%%%%%%%%%%%%%%%%%%%%%%%%%%%%%%%%%%%%%%%%%%%%%%%%%%%%%%%%%%%%%%%%%%%%%%%%%%%%%%%%%%%%%%%%%%%%%%%%%%%%%%%%%%%%%%%%%%%%%%%%%%%%
%%%%%%%%%%%%%%%%%%%%%%%%%%%%%%%%%%%%%%%%%%%%%%%%%%%%%%%%%%%%%%%%%%%%%%%%%%%%%%%%%%%%%%%%%%%%%%%%%%%%%%%%%%%%%%%%%%%%%%%%%%%%%%%%%%%%%%%%%%%%%%%%%%%%%%%%
%\begin{section}{{\bf The Gross-Kohnen-Zagier theorem in higher dimension}}
%\setcounter{equation}{0}
%\vskip 3mm

\appendix
{\bf \large Appendix\ C. The Gross-Kohnen-Zagier theorem in higher \\
\indent \ \ \ \ \ \indent \ \ \ \ \ \ \ \ \ \ \ dimension}
%\section*{{\LARGE{\bfseries Appendix C. The Gross-Kohnen-Zagier theorem in higher dimension}} \\}
\renewcommand{\theequation}{C.\arabic{equation}}
\def\k{\kappa}
\vskip 3mm
We let $\Gamma$ be a subgroup of $Mp_2(\BR)$ commensurable with $Mp_2(\BZ)$, and let $\rho$ be a finite dimensional representation of $\G$
on a complex vector space $V_\rho$ which factors through a finite quotient of $\G$ such that $\rho=\s_k$ on
$\G\cap K$. Choose $k\in {\frac 12}\BZ$. We denote by ${\sf ModForm}(\G,k,\rho)$ the space of modular forms of weight $k$ and $\rho$ for
$\G$ which are meromorphic at cusps and by ${\sf HolModForm}(\G,k,\rho)$ the space of modular forms of weight $k$ and $\rho$ for
$\G$ which are holomorphic at all cusps.
\vskip 0.2cm
Let $\kappa$ be a cusp of $\G$ and let $q_{\kappa}$ be a uniformizing parameter at $\kappa$ in $\G\ba \BH$. let $(\rho^*,V_\rho^*)$ be the
dual representation of $(\rho,V_\rho).$ Let
\begin{equation*}
{\sf PowSer}_\kappa(\G,\rho)=\BC [[q_\kappa]]\otimes V_\rho \qquad {\rm and}\qquad
{\sf Laur}_\kappa (\G,\rho)=\BC [[q_\kappa]][q_\kappa^{-1}]\otimes V_\rho
\end{equation*}
be the space of formal power series in $q_\kappa$ with coefficients in $V_\rho$ and the space of formal Laurent series in $q_\kappa$
with coefficients in $V_\rho$ respectively. Let
\begin{equation*}
{\sf Sing}_\kappa(\G,\rho)=
{\frac {{\sf Laur}_\kappa (\G,\rho)}{q_\k {\sf PowSer}_\kappa(\G,\rho) } }
\end{equation*}
be the space of possible singularities and constant terms of $V_\rho$-valued Laurent series at $\k$.
The two spaces ${\sf PowSer}_\kappa(\G,\rho^*)$ and ${\sf Sing}_\kappa(\G,\rho)$ are paired into $\BC$ by taking the residue
\begin{equation*}
\langle f,\phi\rangle ={\rm res}\big( f\phi\, q_\k^{-1}dq_\k\big),\qquad f\in {\sf PowSer}_\kappa(\G,\rho^*),\ \
\phi\in {\sf Sing}_\kappa(\G,\rho),
\end{equation*}
where the product of $f$ and $\phi$ is defined using the pairing of $V_\rho$ and $V_\rho^*$. The space
\begin{equation*}
{\sf Sing}(\G,\rho)=\bigoplus_{\k:\,cusp} {\sf Sing}_\kappa(\G,\rho)
\end{equation*}
and
\begin{equation*}
{\sf PowSer}(\G,\rho^*)= \bigoplus_{\k:\,cusp} {\sf PowSer}_\kappa(\G,\rho^*)
\end{equation*}
are paired by the sum of the local pairings at the cusps. Then we have the canonical maps
\begin{equation*}
\lambda_k:{\sf HolModForm}(\G,k,\rho)\lrt {\sf PowSer}_\kappa(\G,\rho)
\end{equation*}
and
\begin{equation*}
\lambda_k^*:{\sf HolModForm}(\G,k,\rho^*)\lrt {\sf PowSer}_\kappa(\G,\rho^*).
\end{equation*}
We also have the natural map
\begin{equation*}
\mu_{2-k}:{\sf ModForm}(\G,2-k,\rho)\lrt {\sf Sing}(\G,\rho)
\end{equation*}
defined by taking the Fourier expansions of their nonpositive part at the various cusps.

\vskip 0.2cm
We define the space ${\sf Obstruct}(\G,k,\rho)$ of obstructions to finding a modular form of weight $k$ and type $\rho$ which is holomorphic
on $\BH$ and has meromorphic singularities and constant terms at the cusps to be the space
\begin{equation*}
{\sf Obstruct}(\G,k,\rho)= {\frac {{\sf Sing}_\kappa(\G,\rho)}{\mu_k\left( {\sf ModForm}(\G,k,\rho)\right)}}
\end{equation*}

Borcherds \cite{Bo2} proved the following duality that is called the {\sf Serre\ duality}.
\vskip 3mm\noindent
{\bf Theorem\ C.1.}
%\begin{theorem}
{\it Suppose that $k\in {\frac 12}\BZ,\ \G$ is a subgroup of $Mp_2(\BR)$ commensurable with $Mp_2(\BZ)$,
and let $\rho$ be a finite dimensional representation of $\G$
on a complex vector space $V_\rho$ which factors through a finite quotient of $\G$ such that $\rho=\s_k$ on
$\G\cap K$. Then
\vskip 0.2cm\noindent
(a) $\dim_\BC {\sf Obstruct}(\G,2-k,\rho)< \infty$.

\vskip 0.2cm\noindent
(b) ${\sf Obstruct}(\G,2-k,\rho)$ is dual to ${\sf HolModForm}(\G,k,\rho)$. The pairing between them is induced by the above pairing between
${\sf Sing}(\G,\rho)$ and  ${\sf PowSer}(\G,\rho^*)$. In other words,
\begin{equation*}
\mu_{2-k}\left( {\sf ModForm}(\G,2-k,\rho)\right)= \Big( \lambda_k^*\big( {\sf HolModForm}(\G,k,\rho^*)\big)\Big)^\perp
\end{equation*}
equivalently,
\begin{equation*}
\lambda_k^*\big( {\sf HolModForm}(\G,k,\rho^*)\big)=
\Big( \mu_{2-k}\left( {\sf ModForm}(\G,2-k,\rho)\right)\Big)^\perp.
\end{equation*}}
%\end{theorem}

\vskip 0.2cm\noindent
{\it Proof.} The proof can be found in \cite[p.\,224]{Bo2}. \hfill $\square$

\vskip 0.2cm
Let $M$ be an even lattice of signature $(2,\ell)$. Suppose that $\G$ is a discrete group acting on $Gr(M)$.
We define a divisor on $X_\G:=\G\ba Gr(M)$ to be a locally finite $\G$-invariant divisor on $Gr(M)$
whose support is a locally finite union of a finite number of $\G$-orbits of irreducible subvarieties of $Gr(M)$ of codimension one.
For any negative norm $v\in M\otimes \BR$, we let
\begin{equation*}
v^\perp=\left\{ x\in Gr(M)\,|\ x\perp v\ \right\}
\end{equation*}
be the divisor of $Gr(M)$. For any rational number $n\in\BQ$ and $\g\in M'/M$, we define the
{\sf Heegner\ divisor} $y_{n,\g}$ of $Gr(M)$ by
%\begin{equation}
$$
{\rm (C.1)}\hskip 3cm     y_{n,\g}:=\sum_{v\in M+\g \atop (v,v)=2n} v^\perp .\hskip 10cm
$$
%\end{equation}
Since $v\in M+\g$ implies $-v\in M-\g$, we see that $y_{n,\g}=y_{n,-\g}.$ We define the group
%\begin{equation}
$${\rm (C.2)}\hskip 3cm
{\sf Heeg}(X_\G)=\BZ\, y_{0,0}\bigoplus \sum_{\g\in M'/M}\sum_{n\in \BQ} \BZ \,y_{n,\g},
\hskip 10cm
%\end{equation}
$$
where $y_{0,0}$ is a symbol and $y_{n,\g}$ is the Heegner divisor of $Gr(M)$ defined by (C.1) such that
$y_{n,\g}=0$ if $n>0$, and $y_{n,\g}=0$ if $n=0$ and $\g=0$.
We say that a Heegner divisor $D$ is {\it principal} if it is of the form $D=c_{0,0}\,y_{0,0}+D_0$,
where $D$ is the divisor of a meromorphic automorphic form of weight $c_{0,0}/2$
for some integer $c_{0,0}\in\BZ$ and some unitary character of finite order of the subgroup of
${\rm Aut}(M)$ fixing all the elements of $M'/M$. We denote by ${\sf PrinHeeg}(X_\G)$ the subgroup of principal Heegner divisors
and define the group ${\sf HeegCl}(X_\G)$ of Heegner divisor classes by
$${\rm (C.3)}\hskip 3cm
%\begin{equation}
{\sf HeegCl}(X_\G):= {\frac {{\sf Heeg}(X_\G)}{{\sf PrinHeeg}(X_\G)} }. \hskip 10cm
$$
%\end{equation}
We define the linear map
$${\rm (C.4)}\hskip 3cm
%\begin{equation}
\xi_M: {\sf Sing}(Mp_2(\BZ),\rho_M)\lrt {\sf HeegCl}(X_\G)\otimes_\BZ \BC \hskip 10cm
%\end{equation}
$$
by
\begin{equation*}
\xi_M \left( \sum_{n,\g}c_{n,\g}\,q^n\,{\mathfrak e}_\g\right) =\sum_{n,\g} c_{n,\g}\,y_{n,\g}.
\end{equation*}
Obviously $\xi_M$ is surjective. For s subring $F$ of $\BC$, we let ${\sf Sing}(Mp_2(\BZ),\rho_M)_F$ be the
$F$-submodule of ${\sf Sing}(Mp_2(\BZ),\rho_M)$ for which the coefficients of $q^n\,{\mathfrak e}_\g$ for
$n\leq 0$ are in $F$, and let
\begin{equation*}
{\sf ModForm}\big( Mp_2(\BZ),1-{\ell}/2,\rho_M\big)_\BZ\,\subset\,
{\sf ModForm}\big( Mp_2(\BZ),1-{\ell}/2,\rho_M \big)
\end{equation*}
be the $\BZ$-submodule whose image under $\mu_{1-{\ell}/2}$ lies in ${\sf Sing}(Mp_2(\BZ),\rho_M)_\BZ$.

\vskip 3mm\noindent
{\bf Theorem\ C.2}\,
%\begin{theorem}
(Borcherds \cite{Bo2}). {\it Suppose $M$ is an even lattice of signature $(2,\ell)$. Let $f$ be a
nearly holomorphic modular form on $\BH$ of weight $1-{\ell}/2$ and type $\rho_M$ whose coefficients
$c_{n,\g}$ are {\it integers} for $n\leq 0$. Then the Heegner divisor $\sum_{n,\g}c_{n,\g}\,y_{n,\g}$ is
principal, in other words,
\begin{equation*}
\xi_M \left( \mu_{1-{\ell}/2}\left( {\sf ModForm}\big( Mp_2(\BZ),1-{\ell}/2,\rho_M \big) \right) \right)
\subset {\sf PrinHeeg}(X_\G).
\end{equation*}}
%\end{theorem}
\vskip 0.2cm \noindent
{\it Proof.}  The proof can be found in \cite[Theorem 4.1,\,\,pp.\,225--226]{Bo2}. \hfill $\square$

\vskip 3mm\noindent
{\bf Lemma\ C.1.}
%\begin{lemma}
{\it There is a number field $F$ of finite degree over $\BQ$ such that the space
$${\sf HolModForm}\big( Mp_2(\BZ),1+{\ell}/2,\rho_M^* \big)$$ has a basis $\{ f_1,\cdots,f_d\}$ whose Fourier
coefficients all lie in $F$, i.e., such that
$$\lambda_{1+{\ell}/2}^*(f_i)\in {\sf PowSer}\big( Mp_2(\BZ),\rho_M^*\big)_F$$
for all $i=1,2,\cdots,d.$}
%\end{lemma}

\vskip 0.2cm
\vskip 3mm\noindent
{\bf Lemma\ C.2.}
%\begin{lemma}
{\it Let
$${\rm Gal}(\overline{\BQ}/\BQ) \cdot \lambda_{1+{\ell}/2}^*
\left(  {\sf HolModForm}\big( Mp_2(\BZ),1+{\ell}/2,\rho_M^* \big) \right)$$
be the space of ${\rm Gal}(\overline{\BQ}/ \BQ)$-conjugates of the $q$-expansions of elements of
$${\sf HolModForm}\big( Mp_2(\BZ),1+{\ell}/2,\rho_M^* \big).$$ Then
\begin{eqnarray*}
& & \mu_{1-{\ell}/2}\left( {\sf ModForm}\big( Mp_2(\BZ),1-{\ell}/2,\rho_M^* \big)_\BZ \right)\otimes\BC \subset
{\sf Sing}(Mp_2(\BZ),\rho_M)\\
&=& \left( {\rm Gal}(\overline{\BQ}/\BQ) \cdot \lambda_{1+{\ell}/2}^*
\left(  {\sf HolModForm}\big( Mp_2(\BZ),1+{\ell}/2,\rho_M^* \big) \right) \right)^\perp.
\end{eqnarray*}
Moreover this space has finite index in ${\sf Sing}(Mp_2(\BZ),\rho_M)$.}
%\end{lemma}
\vskip 0.2cm\noindent
{\it Proof.} Using Theorem C.1 and Lemma C.1, we can prove Lemma C.2. We refer to
\cite[Lemma 4.3, pp.\,226--227]{Bo2} for
detail. \hfill $\square$

\vskip 0.2cm
Finally Borcherds\,\cite[Theorem 4.5]{Bo2} proves a generalization of the Gross-Kohnen-Zagier theorem to higher dimension.

\vskip 3mm\noindent
{\bf Theorem\ C.3.}
%\begin{theorem}
{\it The series
$${\rm (C.5)}\hskip 3cm
%\begin{equation}
\sum_{n\in \BQ}\sum_{\g\in M'/M} y_{-n,\g}\,q^n\,{\mathfrak e}_\g \hskip 10cm
$$
%\end{equation}
lies in the space
\begin{equation*}
\left( {\sf HeegCl}(X_\G)\otimes\BC\right) \otimes_\BC
\left( {\rm Gal}(\overline{\BQ}/\BQ) \cdot \lambda_{1+{\ell}/2}^*
\left(  {\sf HolModForm}\big( Mp_2(\BZ),1+{\ell}/2,\rho_M^* \big) \right) \right),
\end{equation*}
that is, the series (C.5) is a modular form of weight $1+{\ell}/2$ and type $\rho_M$.}
%\end{theorem}
\vskip 0.2cm\noindent
{\it Proof.} From the fact that the complex vector space ${\sf HeegCl}(X_\G)\otimes\BC$ generated by
the Heegner divisor classes is finite dimensional, we see that
\begin{equation*}
\sum_{n\in \BQ}\sum_{\g\in M'/M} y_{-n,\g}\,q^n\,{\mathfrak e}_\g \in
\left( {\sf HeegCl}(X_\G)\otimes\BC\right) \otimes_\BC
{\sf PowSer}\big( Mp_2(\BZ),\rho_M^*\big).
\end{equation*}
By Theorem C.2, the pairing
$$\sum_{n,\g}c_{n,\g}\,y_{n,\g}=\xi_M \left( \mu_{1-{\ell}/2}
\left( \sum_{n,\g}q^n\,{\mathfrak e}_\g\right) \right)$$
is zero. Hence the above theorem follows from Lemma C.2. \hfill $\square$

%\end{section}

\vskip 1cm

\end{document}